\newtheorem{theorem}{Theorem}[section]
\newtheorem{definition}[theorem]{Definition}
\newtheorem{proposition}[theorem]{Proposition}
\begin{document}

\title[The Frucht property]{The Frucht property in the quantum group setting}

\author{T. Banica}
\address{T.B.: Department of Mathematics, University of Cergy-Pontoise, Cergy-Pontoise, France. {\tt teo.banica@gmail.com}}

\author{J.P. McCarthy}
\address{J.P.M.: Department of Mathematics, Munster Technological University, Cork, Ireland. {\tt jeremiah.mccarthy@cit.ie}}

\subjclass[2010]{46L65 (46L53, 81R50)}
\keywords{Quantum permutation, Quantum automorphism}

\begin{abstract}
A classical theorem of Frucht states that any finite group  appears as the automorphism group of a finite graph. In the quantum setting the problem is to understand the structure of the compact quantum groups  which can appear as quantum automorphism groups of finite graphs. We discuss here this question, notably with a number of negative results.
\end{abstract}
\setcounter{tocdepth}{1}
\maketitle

\baselineskip=16.5pt
\tableofcontents
\baselineskip=14pt

\section*{Introduction}
A natural question in group theory is whether or not a  group can be realised as the symmetry group of some object. There are various positive answers to this question, but perhaps one of the easiest to parse is a 1939 theorem of Frucht who showed that every finite group is the automorphism group of a finite graph \cite{fru}, a result that was subsequently extended to the infinite setting independently by de Groot \cite{deg}, and Sabidussi \cite{sab}.

\bigskip

About half a century on from Frucht's theorem, the Woronowicz school of compact quantum groups was established with three seminal papers: first \cite{wo1} and later \cite{wo2,wo3}. It is difficult to succinctly chart the subsequent investigation of these  quantum groups, but the influential examples, including the quantum permutation group $S_N^+$, constructed by Wang \cite{wa1,wa2} in the 1990s can be cited as providing much impetus for an intensive study of these and other examples  by various authors (including the first author here) in the 2000s, into the 2010s, and indeed to the present day.

\bigskip

One of the most natural urges in this programme is to direct to compact quantum groups questions asked and answered in classical group theory. For example, does a compact quantum group comprise the quantum symmetry group of some object?

\bigskip

A finite graph $X$ with $N$ vertices has an automorphism group $G(X)\subset S_N$, as well as a quantum automorphism group $G^+(X)\subset S_N^+$. We have $G(X)\subset G^+(X)$, and if this inclusion is an equality, we say that $X$ has no quantum symmetry. The contrary can happen as well, and in this case the study of $G^+(X)$ is an interesting question. All this goes back to old work from the mid 2000s, mainly by: the first author, Bichon, and their collaborators \cite{ba3,ba4,bb1,bb2,bbg,bbc,bi1}.

\bigskip

The subject was reinvigorated in the  mid to late 2010s, and in fact recovered its full front-line status from the mid 2000s, on one hand due to solutions found for some old questions, and on the other hand, due to a number of new motivations, coming as a complement to some of the original motivations.

\bigskip

 Lupini, Man\v cinska and Roberson did in their ground-breaking 2017 paper \cite{lmr} a number of unexpected things, namely: a key  extension of Bichon's work on orbits \cite{bi3} to orbitals; a counterexample to an old conjecture from \cite{ba4}, stating that quantum transitivity implies transitivity; proved that the probability that a random graph on $N$ vertices has quantum symmetry goes to zero as $N\rightarrow \infty$;  and, most importantly, found a connection with quantum information theory and nonlocal games. Another important paper, with some partly related results by Musto, Reutter, Verdon \cite{mrv}, appeared at the same time.

\bigskip

 Also at the same time, Schmidt managed to restart the computation work of $G^+(X)$ for graphs with a small number of vertices, solving an old question going back to \cite{bb2} regarding the Petersen graph \cite{sc1}. Later on, with \cite{sc2,sc3} and other papers, he managed to turn the whole program into something that can be labelled as ``high level discrete mathematics'', which is interesting and fruitful for both quantum permutations, and for advanced graph theory. Also a number of interesting advances on a number of old open questions were made in the PhD work of Chassaniol, done under the guidance of Bichon, published later in \cite{ch1,ch2}.

\bigskip

Finally there were big collaborations \cite{bce,els}, mixing operator algebra and quantum group people, quantum physicists, pure algebraists, graph theorists, and many more.

\bigskip

The present paper is somehow dual to previous studies of quantum automorphism groups, which start with a finite graph and study:
$$X\longrightarrow G^+(X).$$
Instead we will start with a compact quantum group $G$, and study all the (isomorphism classes of) finite graphs it acts on:
$$G\longrightarrow \{X_a\}.$$
Our big question, naturally leading on from Frucht's theorem, is whether any of these finite graphs has quantum automorphism group $G^+(X_a)=G$?

\bigskip

Our study leans heavily both on the orbit theory of Bichon \cite{bi3}, as well as ideas, notably orbital theory, from the paper of Lupini, Man\v cinska and Roberson \cite{lmr}. Key to the whole business is connecting an old observation of Bichon \cite{bi2} with orbital theory.

\bigskip

The paper is organised as follows: 1-3 are preliminary sections, on the quantum permutation groups, and quantum automorphism groups of finite graphs, in 4 we present some quantum permutation groups of possible interest in this study, and in 5-8 we discuss the Frucht property, with a number of negative results.

\section{Quantum permutation groups}
We use the compact matrix quantum group formalism of Woronowicz \cite{wo1,wo2}, under the supplementary assumption $S^2=\operatorname{id}$. The axioms are as follows:

\begin{definition}\label{DEF1.1}
A Woronowicz algebra is a $\mathrm{C}^*$-algebra $A$, given with a fundamental unitary representation $u\in M_N(A)$ whose entries generate $A$, such that there exist morphisms
$$\Delta:A\to A\otimes A,$$
$$\varepsilon:A\to\mathbb C,$$
$$S:A\to A^{opp},$$
called comultiplication, counit, and antipode, given by the formulae
$$\Delta(u_{ij})=\sum_ku_{ik}\otimes u_{kj},$$
$$\varepsilon(u_{ij})=\delta_{ij},$$
$$S(u_{ij})=u_{ji}^*,$$
on the standard generators $u_{ij}$.
\end{definition}

Here $\otimes$ is the minimal tensor product of $\mathrm{C}^*$-algebras, and $A^{opp}$ is the opposite $\mathrm{C}^*$-algebra, with multiplication $a\cdot b=ba$. The basic examples are as follows:

\bigskip

(1) Given a compact Lie group, $G\subset U_N$, the algebra $A=C(G)$ of continuous functions $f:G\to\mathbb C$, together with the matrix formed by the standard coordinates, $u_{ij}:g\to g_{ij}$, is a Woronowicz algebra. This algebra is commutative, and the Gelfand theorem shows that any commutative Woronowicz algebra is of this form. See \cite{wo1}.

\bigskip

(2) Given a finitely generated discrete group $\Gamma=\langle g_1,\ldots,g_N\rangle $, the full group $\mathrm{C}^*$-algebra $A=\mathrm{C}^*(\Gamma)$ is a Woronowicz algebra too, with the diagonal matrix formed by the generators, $u=\operatorname{diag}(g_1,\ldots,g_N)$. This algebra $A=\mathrm{C}^*(\Gamma)$ is cocommutative, in the sense that $\Sigma\Delta=\Delta$, where $\Sigma(a\otimes b)=b\otimes a$ is the flip. Conversely, any cocommutative Woronowicz algebra can be shown to be of this form, up to identifying the different group $\mathrm{C}^*$-algebras that $\Gamma$ might have, in the non-amenable case. See \cite{wo1}.

\bigskip

(3) The most basic examples appear by intersecting (1,2). Indeed, the  compact abelian Lie groups $G\subset U_N$ are in Pontrjagin duality, $G=\widehat{\Gamma}$ and $\Gamma=\widehat{G}$, with the finitely generated discrete abelian groups $\Gamma=\langle g_1,\ldots,g_N\rangle $, and in this situation, the Fourier transform gives an identification of Woronowicz algebras $C(G)=\mathrm{C}^*(\Gamma)$. See \cite{wo1}.

\bigskip

Based on these examples and observations, let us formulate:

\begin{definition}\label{DEF1.2}
Given a Woronowicz algebra $(A,u)$, we write
$$A=C(G)=\mathrm{C}^*(\Gamma),$$
and call $G,\Gamma$ compact and discrete quantum groups. Also, we write
$$G=\widehat{\Gamma}\quad,\quad \Gamma=\widehat{G},$$
and call $G,\Gamma$ dual to each other. We say that $G$ is a finite quantum group if $C(G)$ is finite dimensional.  Finally, we identify two Woronowicz algebras
$$(A,u)=(B,v)$$
when the  $*$-algebras $\mathcal A=\langle u_{ij}\rangle $ and $\mathcal B=\langle v_{ij}\rangle $ are isomorphic, via $u_{ij}\to v_{ij}$.
\end{definition}

\bigskip

The basic theory of the compact and discrete quantum groups, developed by Woronowicz in \cite{wo1,wo2}, consists in an existence result for the Haar measure, an extension of the classical Peter--Weyl theory, and an extension of Tannakian duality as well.

\bigskip

\begin{definition}\label{DEFFix}
Let $(C(G),u)$ and $(C(G),v)$ be Woronowicz algebras with the same underlying $\mathrm{C}^*$-algebra $C(G)$, but different fundamental unitary matrices, $u\in M_{N_1}(C(G))$ and $v\in M_{N_2}(C(G))$. Where
$$\Delta_1(u_{ij})=\sum_{k=1}^{N_1}u_{ik}\otimes u_{kj}\text{ \quad and\quad }\Delta_2(v_{ij})=\sum_{k=1}^{N_2} v_{ik}\otimes v_{kj},$$
if $\Delta_1=\Delta_2$, we consider $(C(G),u)$ and $(C(G),v)$ as representations of the same ``abstract'' compact quantum group $G$.
\end{definition}
This definition is justified by the fact that if $(C(G),u)$ is a Woronowicz algebra, then $C(G)$ and $\Delta:C(G)\rightarrow C(G)\otimes C(G)$ also satisfy the axioms of an ``abstract'' compact quantum group, as in \cite{wo3}. This allows different Woronowicz algebra representations of a compact quantum group.

\bigskip

Following Wang \cite{wa2}, the quantum permutation groups are constructed as follows:

\begin{definition}\label{DEF1.3}
The quantum permutation group $S_N^+$ is the compact quantum group whose Woronowicz algebra is the universal $\mathrm{C}^*$-algebra given by
$$C(S_N^+)=\mathrm{C}^*\left((u_{ij})_{i,j=1\ldots N}\Big|\,u={\rm magic}\right),$$
with ``magic'' meaning formed of projections, in the $\mathrm{C}^*$-algebra sense $(p^2=p=p^*)$, which must sum up to $1$ on each row and each column.
\end{definition}

Here the fact that $C(S_N^+)$ is indeed a Woronowicz algebra is standard, coming from the fact that if $(u_{ij})$ is magic, then so are the following matrices:
$$U_{ij}=\sum_ku_{ik}\otimes u_{kj},$$
$$1_{ij}=\delta_{ij},$$
$$v_{ij}=u_{ji}.$$

Thus we can define indeed morphisms $\Delta,\varepsilon,S$ as in Definition \ref{DEF1.1}, by using the universality property of $C(S_N^+)$. See \cite{wa2}.

\begin{definition}
A quantum permutation group is a compact quantum group that admits a fundamental magic representation. When we write
$$G\subset S_N^+,$$
we assume a fixed fundamental magic representation $u\in M_N(C(G))$, and subsequently $u_{ij}$ referring to a generator of $C(G)$ rather than of $C(S_N^+)$.
\end{definition}

\bigskip

The definitions and terminology of Definition \ref{DEF1.3} are motivated by a series of key observations and results from \cite{wa2}, as follows:

\bigskip

(1) The symmetric group $S_N$, when regarded as the group of permutations of the $N$ coordinate axes of $\mathbb R^N$, becomes a matrix group, $S_N\subset O_N$. In this picture, the standard coordinates $v_{ij}:S_N\to\mathbb R$ are the entries of the permutation matrices, given by:
$$v_{ij}=\chi\left(\sigma\in S_N\Big|\sigma(j)=i\right).$$

(2) These characteristic functions $v_{ij}$ are abstract projections in the $\mathrm{C}^*$-algebra sense, $v_{ij}^2=v_{ij}=v_{ij}^*$, summing to 1 on each row and column of $v=(v_{ij})$. Thus the matrix $v$ is magic, and so we have a quotient map as follows:
$$C(S_N^+)\to C(S_N),$$
$$u_{ij}\to v_{ij}.$$

(3) This quotient map is a morphism of Woronowicz algebras, and so corresponds to an embedding of compact quantum groups, as follows:
$$S_N\subset S_N^+.$$

(4) The classical version, $G_{\text{class}}\subset G$, of a compact quantum group $G$ is the Gelfand spectrum of the abelianisation of $C(G)$. Recall the abelianisation of $C(G)$ is the commutative $\mathrm{C}^*$-algebra $C(G)/I$, with $I\subset C(G)$ being the commutator ideal. In turn $G$ is said to be a ``liberation'' of $G_{\text{class}}$.  The embedding $S_N\subset S_N^+$ is a ``liberation''. This comes indeed from the following presentation result for the algebra $C(S_N)$, which is standard:
$$C(S_N^+)=\mathrm{C}^*_{comm}\left((v_{ij})_{i,j=1\ldots N}\Big|\,v={\rm magic}\right).$$

(5) Now since we have a liberation $S_N\subset S_N^+$, it makes sense to say that $S_N^+$ is a ``quantum permutation group''. Moreover, a quick comparison between the formula in Definition \ref{DEF1.3} for $C(S_N^+)$ and the one above for $C(S_N)$ suggests that $S_N^+$ should be the ``biggest'' such liberation, and so is ``the'' quantum permutation group.

\bigskip

(6) These latter considerations are confirmed by a precise result, stating that $S_N^+$ is the biggest compact quantum group acting on $X=\{1,\ldots,N\}$. To be more precise, $C(S_N^+)$ is the biggest Woronowicz algebra coacting on $C(X)$, in the following sense:
$$\Phi:C(X)\to C(X)\otimes C(G),$$
$$e_j\to\sum e_i\otimes u_{ij}.$$

(7) The proof of this latter fact is based on the observation, coming from some elementary computations, that $\Phi$ defined as above is a coaction precisely when $u=(u_{ij})$ is magic. Technically speaking, one must assume in all this that the action $G\curvearrowright X$ preserves the counting measure on $X$, with this being not automatic in the quantum case.

\bigskip

As an extremely important question now, is this abstract object $S_N^+$ that we just introduced really bigger than $S_N$? And also, if bigger, is it really non-finite, needing the ``compact'' formalism? The answers here, which add to the above, and fully justify Definition \ref{DEF1.3}, are as follows, also from \cite{wa2}:

\begin{theorem}
The embedding $S_N\subset S_N^+$ is an isomorphism at $N=1,2,3$, but not at $N\geq4$, where $S_N^+$ is a non-classical, non-finite compact quantum group.
\end{theorem}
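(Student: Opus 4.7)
The plan is to split into two opposing regimes: classical collapse for $N\leq 3$, and a liberation that is both non-classical and infinite-dimensional for $N\geq 4$.

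For the small cases, the approach is to exploit the magic relations directly. At $N=1$ there is nothing to prove, since $u_{11}$ is a projection summing to $1$, hence $u_{11}=1$. At $N=2$ the row and column sum conditions force $u_{12}=1-u_{11}=u_{21}$ and $u_{22}=u_{11}$, so $C(S_2^+)$ is generated by the single projection $u_{11}$ and is therefore commutative, which by the Gelfand theorem yields $C(S_2^+)=C(S_2)$. The case $N=3$ is where I expect the main obstacle: one must show that the entries of a $3\times 3$ magic matrix commute. The plan is to fix two entries $u_{ij}$ and $u_{kl}$ with $i\neq k$, $j\neq l$, and use the relations $u_{ij}u_{ij'}=0$ for $j\neq j'$ and similarly in columns, combined with the row/column sum identities $u_{ij}=1-\sum_{j'\neq j}u_{ij'}$, to rewrite products $u_{ij}u_{kl}$ in two ways and extract commutativity. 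The remaining commutations (entries in the same row or column) are immediate from orthogonality. This forces $C(S_3^+)$ to be commutative, and comparing with the presentation of $C(S_3)$ in item (4) gives $S_3^+=S_3$.

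For $N=4$, the strategy is Wang's free projection construction. Take any two projections $p,q$ in a unital C$^*$-algebra $B$ that do not commute, for instance the canonical generating projections in the group algebra $\mathrm{C}^*(\mathbb{Z}_2\ast\mathbb{Z}_2)$, which is infinite-dimensional. Then the block-diagonal matrix
$$v=\begin{pmatrix} p & 1-p & 0 & 0 \\ 1-p & p & 0 & 0 \\ 0 & 0 & q & 1-q \\ 0 & 0 & 1-q & q \end{pmatrix}$$
is magic in $M_4(B)$. By the universal property of $C(S_4^+)$ this yields a surjective $*$-homomorphism $C(S_4^+)\to \langle p,q\rangle\subset B$. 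Since the target is non-commutative and infinite-dimensional, the same is true of $C(S_4^+)$, so $S_4^+$ is strictly larger than $S_4$, is non-classical, and cannot be a finite quantum group.

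For $N\geq 5$, the plan is to bootstrap from $N=4$ by extending a magic matrix $w\in M_4(A)$ to an $N\times N$ magic matrix $w\oplus I_{N-4}$, which is again magic. Universality then gives a surjection $C(S_N^+)\to C(S_4^+)$, so the non-commutativity and infinite-dimensionality propagate upward, completing the argument. The only genuinely delicate step I foresee is the $N=3$ commutation calculation; the $N\geq 4$ half is essentially a matter of exhibiting the right magic matrix and invoking universality.
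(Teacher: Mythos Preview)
Your proposal is correct and follows essentially the same approach as the paper: the same direct magic-relation analysis for $N\leq 3$ (including the row/column substitution trick at $N=3$, which the paper executes explicitly as the chain $u_{11}u_{22}=u_{11}u_{33}=u_{22}u_{33}=u_{22}u_{11}$), the same block-diagonal construction with two free projections at $N=4$ (the paper phrases it as $\mathrm{C}^*(p,q)\cong C(\widehat{D_\infty})$, which is exactly your $\mathrm{C}^*(\mathbb{Z}_2\ast\mathbb{Z}_2)$), and the same $w\oplus I_{N-4}$ padding for $N\geq 5$.
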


\begin{proof}
This can be proved by a case-by-case analysis, as follows:

\bigskip

(1) Trivial.

\bigskip

(2) The $N=2$ statement is clear, because a $2\times2$ magic matrix must be as follows, with $p$ being a projection:
$$u=\begin{pmatrix}p&1-p\\ 1-p&p\end{pmatrix}.$$

Now since the entries of $u$ commute, it follows that $C(S_2^+)$ is commutative, and so $S_2^+$ collapses to its classical version, which is $S_2$.

\bigskip

(3) At $N=3$ now, by using the same idea as in the $N=2$ case, we must prove that the entries of any $3\times3$ magic matrix commute. This is something quite tricky, and there are 4 known proofs here \cite{ba3}, \cite{lmr}, \cite{mcc}, \cite{wa1}. According to the proof in \cite{mcc}, which is the most recent, it suffices to show that $u_{11}u_{22}=u_{22}u_{11}$, by showing:
$$u_{11}u_{22}=u_{11}u_{33}=u_{22}u_{33}=u_{22}u_{11}.$$

But these three equalities come respectively from:
$$u_{11}(u_{21}+u_{22}+{u_{23}})=u_{11}(u_{13}+u_{23}+u_{33})$$
$$(u_{11}+u_{21}+u_{31})u_{33}=(u_{21}+u_{22}+u_{23})u_{33}$$
$$u_{22}(u_{31}+u_{32}+u_{33})=u_{22}(u_{11}+u_{21}+{u_{31}})$$

(4) At $N=4$ we must prove that the algebra $C(S_4^+)$ is noncommutative and infinite dimensional, and this comes from the fact that the following matrix is magic, for any choice of two projections $p,q$ on a Hilbert space $H$:
$$u=\begin{pmatrix}
p&1-p&0&0\\
1-p&p&0&0\\
0&0&q&1-q\\
0&0&1-q&q
\end{pmatrix}.$$

Thus, if we choose $p,q$ as the generators of $\mathrm{C}^*(p,q)$, the universal unital $\mathrm{C}^*$-algebra generated by two projections,  we obtain the result, due to the quotient map $C(S_4^+)\to\mathrm{C}^*(p,q)$. It is worth noting that where $D_\infty=\langle a,b \,|\,a^2=b^2=1\rangle$ is the infinite dihedral group, $\mathrm{C}^*(p,q)\cong C(\widehat{D_\infty})$ via the isomorphism $p\to (e+a)/2$ and $q\to (e+b)/2$.

\bigskip

(5) For $N\geq 5$, let $v$ be the magic unitary of $C(S_4^+)$, and $1_{N-4}\in M_{N-4}(C(S_4^+))$ the diagonal matrix with the unit of $C(S_4^+)$ along the diagonal. The magic unitary $u=\operatorname{diag}(v,1_{N-4})$ shows that we have an embedding $S_4^+\subset S_N^+$.
\end{proof}

\section{Orbits, group duals, and orbitals}\label{SEC2}

We have the following notion, due to Bichon \cite{bi3}:

\begin{definition}\label{DEF2.1}
Given a quantum permutation group $G\subset S_N^+$, we define its orbits as being the equivalence classes under the following relation:
$$i\sim j\iff u_{ij}\neq0.$$
We say that $G$ is transitive when we have only one orbit (i.e. $u_{ij}\neq0$ for any $i,j$).
\end{definition}

Observe that in the classical case, $G\subset S_N$, we obtain indeed the classical notation of an orbit, because here the standard coordinates $u_{ij}$ are the following characteristic functions:
$$u_{ij}=\chi\left(\sigma\in G\Big|\sigma(j)=i\right).$$

In general, the fact that $\sim$ defined as above is transitive, reflexive and symmetric comes by examining the formulae of $\Delta,\varepsilon,S$ on the standard generators $u_{ij}$. We refer to \cite{bi3} for full details here. Bichon introduced the orbits mainly in order to solve a key problem, namely the classification of the group dual embeddings $\widehat{\Gamma}\subset S_N^+$. In one sense, we have the following construction, which is elementary:

\begin{proposition}
Given a finitely generated group $\Gamma=\langle g_1,\ldots,g_k\rangle $, with the generators having finite order, $g_i^{N_i}=1$, we have a quantum group embedding
$$\widehat{\Gamma}\subset S_N^+,$$
with $N=N_1+\ldots+N_k$, as follows, using the Fourier identifications $\widehat{\mathbb Z}_{N_p}\simeq\mathbb Z_{N_p}$,
\begin{eqnarray*}
\widehat{\Gamma}
&\subset&\widehat{\mathbb Z_{N_1}*\ldots*\mathbb Z_{N_k}}=\widehat{\mathbb Z}_{N_1}\,\hat{*}\,\ldots\,\hat{*}\,\widehat{\mathbb Z}_{N_k}\\
&\cong&\mathbb Z_{N_1}\,\hat{*}\,\ldots\,\hat{*}\,\mathbb Z_{N_k}\subset S_{N_1}^+\,\hat{*}\,\ldots\,\hat{*}\,S_{N_k}^+\\
&\subset&S_N^+,
\end{eqnarray*}
and with $\hat{*}$ being the dual free product operation for compact quantum groups.
\end{proposition}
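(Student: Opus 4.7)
The plan is to verify each arrow of the displayed chain separately; each is a standard functorial construction and the proposition's content lies in their concatenation. For the first inclusion, the hypothesis $g_i^{N_i}=1$ lets the universal property of the free product $\mathbb{Z}_{N_1}*\ldots*\mathbb{Z}_{N_k}$ supply a surjection onto $\Gamma$ sending the $p$-th cyclic generator to $g_p$; dualising gives a surjection of full group $\mathrm{C}^*$-algebras intertwining the comultiplications, hence the claimed embedding of duals. The central equality is the identification $\mathrm{C}^*(\Gamma_1*\Gamma_2)\cong \mathrm{C}^*(\Gamma_1)*\mathrm{C}^*(\Gamma_2)$, which is the very definition of $\hat{*}$ on the discrete side, and $\widehat{\mathbb{Z}}_{N_p}\cong\mathbb{Z}_{N_p}$ is Pontrjagin duality.

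For the embedding $\mathbb{Z}_{N_p}\subset S_{N_p}^+$, one uses the regular action of $\mathbb{Z}_{N_p}$ on itself, giving $\mathbb{Z}_{N_p}\subset S_{N_p}$, composed with the classical $S_{N_p}\subset S_{N_p}^+$ recalled in Section 1. Since $\hat{*}$ is functorial with respect to quantum group inclusions (free products of surjections of Woronowicz algebras remain surjections), iterating yields $\mathbb{Z}_{N_1}\,\hat{*}\,\ldots\,\hat{*}\,\mathbb{Z}_{N_k}\subset S_{N_1}^+\,\hat{*}\,\ldots\,\hat{*}\,S_{N_k}^+$.

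The only step with non-trivial content is the last, $S_{N_1}^+\,\hat{*}\,\ldots\,\hat{*}\,S_{N_k}^+\subset S_N^+$. The idea is to exhibit, inside the free product $C(S_{N_1}^+)*\ldots*C(S_{N_k}^+)$, the $N\times N$ block-diagonal matrix $w=\operatorname{diag}(v^{(1)},\ldots,v^{(k)})$ built from the fundamental magic unitaries $v^{(p)}$ of the factors. Off-diagonal blocks vanish and each diagonal block row/column already sums to $1$, so $w$ is magic; the universality in Definition \ref{DEF1.3} then produces a morphism $C(S_N^+)\to C(S_{N_1}^+)*\ldots*C(S_{N_k}^+)$, surjective because every $v^{(p)}_{ij}$ arises as an entry of $w$. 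The main point to check --- and the essential obstacle --- is that this surjection intertwines the comultiplications: for $w_{ij}$ in block $p$, the formula $\Delta(w_{ij})=\sum_{m=1}^N w_{im}\otimes w_{mj}$ collapses to a sum over indices in block $p$ (the off-block entries of $w$ being zero), matching the free product comultiplication on $v^{(p)}_{ij}$; together with the analogous easy verifications for $\varepsilon$ and $S$, this upgrades the surjection to a morphism of Woronowicz algebras and completes the embedding.
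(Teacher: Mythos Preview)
Your proof is correct and follows essentially the same approach as the paper: both trace the displayed chain of inclusions, invoking the universal property of the free product for the first arrow and Wang's dual free product $\hat{*}$ for the middle, with the paper simply declaring the remaining steps ``standard'' where you spell out the block-diagonal magic unitary and the comultiplication check. The one notable difference is that the paper's proof goes on to record the explicit Fourier-type formula $u^{g_p}_{kl}=\frac{1}{N_p}\sum_m w^{(k-l)m}g_p^m$ for the resulting magic unitary and its inversion, which is not needed for the proposition itself but is used repeatedly in later sections.
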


\begin{proof}
Our assumption $\Gamma=\langle g_1,\ldots,g_k\rangle $, with the generators satisfying $g_p^{N_p}=1$, means that $\Gamma$ appears as a quotient of a free product of cyclic groups, as follows:
$$\mathbb Z_{N_1}*\ldots*\mathbb Z_{N_k}\to\Gamma.$$

Thus the dual $\widehat{\Gamma}$ appears as a closed subgroup of the dual $\widehat{\mathbb Z_{N_1}*\ldots*\mathbb Z_{N_k}}$, and the continuation is standard, by using Wang's operation $\hat{*}$ from \cite{wa1}, defined by:
$$C(G\,\hat{*}\,H)=C(G)*C(H).$$

Let us record as well the explicit formula of the embedding. For each $g_p$ the magic unitary $u^{g_p}=(u^{g_p}_{kl})$ of the corresponding dual $\widehat{\mathbb Z}_{N_p}$ is as follows, with $w=e^{2\pi i/N_p}$:
\begin{equation}u^{g_p}_{kl}=\frac{1}{N_p}\sum_{m=1}^{N_p}w^{(k-l)m}g_p^m.\label{fouriermagi}\end{equation}

A magic unitary for $\widehat{\Gamma}$ is then given by the following formula:
$$u=\begin{pmatrix}
u^{g_1}\\
&\ddots\\
&&u^{g_k}
\end{pmatrix}.$$

Finally, let us record the following useful inversion formula:
$$g_p=\sum_{k=1}^{N_p}w^{1-k}u_{k1}^{g_p}.$$

\end{proof}

We have the following converse of the above result, from \cite{bi3}:

\begin{theorem}
Any group dual subgroup $\widehat{\Gamma}\subset S_N^+$ appears as above, from a quotient
$$\mathbb Z_{N_1}*\ldots*\mathbb Z_{N_k}\to\Gamma$$
with the numbers $N_1,\ldots,N_k$ summing up to $N$.
\end{theorem}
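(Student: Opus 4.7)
The plan is to reduce to the transitive case using the orbit decomposition of Definition \ref{DEF2.1}, analyse a transitive cocommutative magic unitary directly to show it must be of Fourier type, and then reassemble the pieces via the dual free product.

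\textbf{Step 1 (orbit decomposition).} By the definition of orbits, $u_{ij}\neq 0$ iff $i\sim j$. A standard fact, exploited by Bichon in \cite{bi3}, is that when $i,j$ lie in different orbits the projection $u_{ij}$ vanishes, so after reordering the basis of $\mathbb{C}^N$ along the orbits $O_1,\ldots,O_k$ the magic unitary decomposes as
$$u=\operatorname{diag}(u^{(1)},\ldots,u^{(k)}),\qquad u^{(p)}\in M_{N_p}(\mathrm{C}^*(\Gamma)),$$
with each block $u^{(p)}$ a transitive magic unitary, $N_p=|O_p|$, and $\sum_p N_p=N$. Letting $\Gamma_p\subset\Gamma$ be the subgroup generated by the spectral support of the entries of $u^{(p)}$, one gets transitive dual embeddings $\widehat{\Gamma_p}\subset S_{N_p}^+$, and $\Gamma$ is generated by $\Gamma_1\cup\cdots\cup\Gamma_k$.

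\textbf{Step 2 (transitive case is cyclic).} I would next show that any transitive group dual $\widehat{H}\subset S_M^+$ arises from a quotient $\mathbb{Z}_M\twoheadrightarrow H$. Expand $u_{ij}=\sum_{g\in H} c^{ij}_g\,g$ and form the matrices $C^g=(c^{ij}_g)\in M_M(\mathbb{C})$. Cocommutativity $\Sigma\Delta=\Delta$ combined with $\Delta(u_{ij})=\sum_k u_{ik}\otimes u_{kj}$ and $\Delta(g)=g\otimes g$ gives, by matching coefficients of $g\otimes h$,
$$C^g C^h=\delta_{g,h}\,C^g,\qquad \sum_{g\in H}C^g=1_M,$$
so the nonzero $C^g$ form a partition of unity by pairwise orthogonal projections in $M_M(\mathbb{C})$. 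Transitivity ($u_{ij}\neq 0$ for all $i,j$) together with the magic constraint $u_{ij}^2=u_{ij}=u_{ij}^*$ then forces each nonzero $C^g$ to be of rank one; hence there are exactly $M$ of them and $|H|=M$. The group operation on $H$ is induced by that of $\mathrm{C}^*(H)$, and a short computation using the inversion formula from the previous proposition identifies the nonzero $C^g$ with the Fourier spectral projections \eqref{fouriermagi} of a single generator $h\in H$ of order $M$. Thus $H\cong\mathbb{Z}_M$.

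\textbf{Step 3 (assembly).} Applying Step 2 to each block, $\Gamma_p$ is a quotient of $\mathbb{Z}_{N_p}$, and by the universal property of the free product the inclusions $\Gamma_p\hookrightarrow\Gamma$ combine into a surjection
$$\mathbb{Z}_{N_1}*\cdots*\mathbb{Z}_{N_k}\twoheadrightarrow\Gamma$$
with $N_1+\cdots+N_k=N$, and the resulting magic unitary matches $u$ block by block, recovering exactly the construction of the previous proposition.

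The main obstacle is Step 2. The orthogonality $C^g C^h=\delta_{g,h}C^g$ is a quick consequence of cocommutativity, but passing from ``orthogonal projections summing to $1$'' to ``rank-one Fourier spectral projections of a single cyclic element'' really uses both transitivity (without it larger-rank $C^g$ are allowed and $H$ need not be cyclic) and the magic constraint on each individual $u_{ij}$ (without which the multiplication on $H$ cannot be reconstructed from the $C^g$). It is this combined use of the magic and cocommutative structures that makes the transitive case rigid.
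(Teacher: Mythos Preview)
Your proposal follows essentially the same architecture as the paper: reduce to the transitive case via the orbit decomposition, show that a transitive group dual inside $S_M^+$ must be cyclic of order $M$, and reassemble via the dual free product. Your spectral decomposition $u=\sum_g C^g\otimes g$ with the $C^g$ mutually orthogonal projections summing to $I_M$ is exactly the paper's diagonalisation $u=U\operatorname{diag}(h_1,\ldots,h_M)U^*$ in disguise (your $C^g$ are the spectral projections of that diagonal form). Neither you nor the paper actually proves the transitive step in full; the paper defers to \cite{bi3}, and you defer to a ``short computation''.

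Two points in your Step~2 are genuinely unjustified and would not go through as written. First, the implication ``transitivity plus the magic constraint forces each nonzero $C^g$ to have rank one'' is asserted but not argued; the constraints $u_{ij}^2=u_{ij}$ and the row/column orthogonality translate into nonlinear relations among the entries $(C^g)_{ij}$ (convolution identities over $H$), and extracting rank one from these is exactly where the work lies. A cleaner entry point is the Haar formula of Proposition~\ref{THORE}: in the transitive case $\int_{\widehat{H}} u_{ij}=1/M$, and since the Haar state on $\mathrm{C}^*(H)$ picks out the coefficient of $e$, this forces $C^e=\frac{1}{M}J$, already rank one. Second, even granting that there are exactly $M$ nonzero $C^g$, the conclusion $|H|=M$ does not follow: the support $\{g:C^g\neq 0\}$ generates $H$ but need not \emph{equal} $H$ without a closure-under-multiplication argument, and your projection relations $C^gC^h=\delta_{g,h}C^g$ say nothing about the group product $gh$. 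This is precisely the step where one must show the support is a coset of a cyclic subgroup, and it is the heart of Bichon's argument in \cite{bi3}.
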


\begin{proof}
Assume that we have a group dual subgroup $\widehat{\Gamma}\subset S_N^+$. The corresponding magic unitary must be of the following form, with $h_p\in\Gamma$ and $U\in U_N$:
$$u=U
\begin{pmatrix}
h_1\\
&\ddots\\
&&h_N
\end{pmatrix}
U^*.$$

Consider now the orbit decomposition for $\widehat{\Gamma}\subset S_N^+$, coming from Definition \ref{DEF2.1}:
$$N=N_1+\ldots+N_k.$$

We conclude that $u$ has a $N=N_1+\ldots+N_k$ block-diagonal pattern, and so that $U$ has as well this $N=N_1+\ldots+N_k$ block-diagonal pattern. But this discussion reduces our problem to its $k=1$ particular case, with the statement here being that the cyclic group $\mathbb Z_N$ is the only transitive group dual $\widehat{\Gamma}\subset S_N^+$. The proof of this latter fact being elementary, we obtain the result. For full details here, we refer to \cite{bi3}.
\end{proof}

In fact,  the orbit theory of Bichon tells us something about all embeddings $G\subset S_N^+$.
\begin{definition}
Given a quantum permutation group $G$, a transitive magic representation of $C(G)$ is a magic unitary $u^{p}\in M_{N_p}(C(G))$ such that:
\begin{enumerate}
  \item $u_{ij}^{p}\neq 0$,
  \item $\displaystyle \Delta(u_{ij}^{p})=\sum_{k=1}^{N_p}u_{ik}^{p}\otimes u_{kj}^{p}$,
  \item $\varepsilon(u_{ij}^{p})=\delta_{ij}$,
  \item $S(u^{p}_{ij})=u_{ji}^{p}$.
\end{enumerate}
Say that a transitive magic representation $u^{p}$ is equivalent to a transitive magic representation $u^{q}$ if there exists a permutation matrix $P$ such that $u^{q}=Pu^{p}P^{-1}$.
\end{definition}

\bigskip

Let $M=\{u^p\}_{p\in M}$ be the set of equivalence classes of transitive magic representations of $C(G)$.
\begin{proposition}\label{PROPBLO}
Let $G\subset S_N^+$. There exists a permutation matrix $P$ such that, where $v=PuP^{-1}$:
\begin{enumerate}
  \item $v$ is block diagonal, with  transitive magic representation blocks,
  \item the transitive magic representations that appear in $u$ are a finite subset $A\subset M$, and can appear with multiplicity,
  \item the entries of the blocks generate $C(G)$.
\end{enumerate}
\end{proposition}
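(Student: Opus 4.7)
The plan is to use Bichon's orbit decomposition from Definition \ref{DEF2.1} to produce $P$. Write $\{1,\ldots,N\}=O_1\sqcup\cdots\sqcup O_k$ for the orbits of $G\subset S_N^+$, with $|O_p|=N_p$, and choose $P$ to be the permutation matrix of any reordering of $\{1,\ldots,N\}$ that places each orbit in a contiguous block of indices. By the very definition of the equivalence relation $\sim$, we have $u_{ij}=0$ whenever $i,j$ lie in distinct orbits, so $v:=PuP^{-1}$ is block diagonal with blocks $v^1,\ldots,v^k$, where $v^p\in M_{N_p}(C(G))$ is the block indexed by $O_p$.

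Next I would verify that each $v^p$ is a transitive magic representation in the sense of the preceding definition. Magicness is inherited: the entries are projections (entries of the magic $v$), and each row and column of $v^p$ sums to the unit because the corresponding full rows and columns of $v$ sum to $1$ while all off-block entries vanish. Transitivity, $v^p_{ij}\neq 0$, is precisely the content of $O_p$ being an orbit. For the comultiplication, the formula $\Delta(v_{ij})=\sum_{l=1}^N v_{il}\otimes v_{lj}$ restricts, for $i,j\in O_p$, to $\sum_{l\in O_p} v^p_{il}\otimes v^p_{lj}$, since any index $l\notin O_p$ contributes $v_{il}=0$ (equivalently $v_{lj}=0$); the counit gives $\delta_{ij}$ trivially, and $S(v^p_{ij})=v_{ji}^*=v^p_{ji}$ using that magic entries are self-adjoint projections.

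For part (2), there are only finitely many orbits, so the transitive magic representations appearing in $v$ form a finite subset $A\subset M$, recorded with multiplicity to allow distinct orbits that yield equivalent (i.e.\ permutation-conjugate) blocks. For part (3), $u$ generates $C(G)$ by the standing assumption on a quantum permutation group, so $v=PuP^{-1}$ does as well; since the entries of $v$ outside the blocks are zero, the nonzero entries, which are exactly the union of the entries of the blocks $v^p$, already generate $C(G)$.

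The only real subtlety is the reduction of the coproduct sum to a single block, which requires the orbit vanishing in two directions simultaneously ($v_{il}=0$ and $v_{lj}=0$ whenever $l\notin O_p$ with $i,j\in O_p$); every other claim follows directly from the choice of $P$ aligning the orbits.
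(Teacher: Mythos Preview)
Your proof is correct and follows essentially the same approach as the paper's own proof: use Bichon's orbit equivalence to partition $\{1,\ldots,N\}$, then conjugate by the permutation that makes the orbits contiguous. The paper's proof is terser (``The rest follows immediately''), while you have written out the verification that each block is a transitive magic representation and that the block entries still generate $C(G)$; these are exactly the details the paper leaves implicit.
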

\begin{proof}
The orbit relation is an equivalence relation and so partitions:
$$\{1,\dots,N\}=\bigsqcup_{p=1}^k X_p.$$
Where $|X_p|=N_p$, let $P$ be a permutation of $\{1,\dots,N\}$ that maps
$$X_1\rightarrow \{1,\dots,N_1\},\,X_2\rightarrow \{N_1+1,\dots,N_1+N_2\},\, \text{etc.}.$$
Then $v=P^{-1}uP$ is a block diagonal fundamental magic representation. The rest follows immediately.
\end{proof}

\bigskip

This gives a theoretical but quickly impractical way of classifying all embeddings $G\subset S_N^+$ up to equivalence.
\begin{enumerate}
  \item Find all the  subsets $A\subset M$ of size $|A|\leq N$.
  \item From these find the subsets $A$ such that
  $$\langle u^p_{ij}\colon p\in A,\,i,j=1,\dots,N_p\rangle= C(G).$$
  \item For $m_p\in \mathbb{N}_{+}$, solve:
  $$\sum_{p\in A}m_pN_p=N.$$
  \item Each solution $(m_p)_{p\in A}\in \mathbb{N}_+^{|A|}$ gives, where each $u^{p_l}$ appears with multiplicity $m_{p_l}$, an embedding:
  $$u=\operatorname{diag}(u^{p_1},\dots,u^{p_1},u^{p_2},\dots,u^{p_2},\dots,u^{p_{|A|}},\dots,u^{p_{|A|}}),$$
  and, up to equivalence, all embeddings $G\subset S_N^+$ are of this form.
\end{enumerate}
This is doable for small finite quantum groups but  quickly becomes difficult.

\bigskip

Beyond finding the set of equivalence classes  $M$ of transitive magic representations, this is slightly oblique to the Frucht problematics of sections \ref{SECDuals}-\ref{SEC8}, where we will have to consider arbitrary embeddings.

\bigskip

As was observed in \cite{lmr}, the orbit theory of Bichon also has something to say regarding the Haar measure:
\begin{proposition}\label{THORE}
If $G\subset S_N^+$, then where $N_p$ is the size of the orbit of $j$:
$$\int_G u_{ij}=\begin{cases}
                  1/N_p, & \mbox{if } i\sim j, \\
                  0, & \mbox{otherwise}.
                \end{cases}$$
\end{proposition}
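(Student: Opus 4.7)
The plan is to use the left invariance of the Haar state on the fundamental magic matrix $u$, together with the column orthogonality $u_{kj}u_{lj}=\delta_{kl}u_{kj}$ coming from the magic property. Write $P_{ij}=\int_G u_{ij}$.

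If $i\not\sim j$ then by definition of the orbit relation $u_{ij}=0$, so $P_{ij}=0$, and only the case $i\sim j$ requires work.

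Apply the slice map $(\int_G\otimes\operatorname{id})$ to $\Delta(u_{ij})=\sum_k u_{ik}\otimes u_{kj}$. Left invariance of the Haar state yields the identity
$$\sum_{k=1}^N P_{ik}u_{kj}=P_{ij}\cdot 1_{C(G)}=P_{ij}\sum_{k=1}^N u_{kj}$$
in $C(G)$, hence $\sum_k(P_{ik}-P_{ij})u_{kj}=0$. Multiplying on the right by $u_{lj}$ and using column orthogonality collapses the sum to a single term, giving the scalar identity $(P_{il}-P_{ij})u_{lj}=0$. Since $u_{lj}\neq 0$ precisely when $l\sim j$, this forces $P_{il}=P_{ij}$ for every $l$ in the orbit of $j$; put differently, along row $i$ the coefficient $P_{il}$ is constant as $l$ ranges over any single orbit.

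To finish, use the row sum $\sum_l P_{il}=\int_G\sum_l u_{il}=\int_G 1=1$. Terms with $l\not\sim i$ vanish, the surviving $N_p$ terms are all equal to $P_{ij}$ (for $i\sim j$), and we conclude $P_{ij}=1/N_p$.

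The only delicate ingredient is the use of column orthogonality to upgrade the single linear relation above into pointwise constancy of $P_{il}$ along orbits, the feature distinguishing the magic setting from an arbitrary matrix of projections. One could alternatively argue conceptually that $P$ is the matrix of the orthogonal projection onto the fixed vectors of $u$ and read the formula from the orbit block decomposition of Proposition~\ref{PROPBLO}, but the direct manipulation above is more economical.
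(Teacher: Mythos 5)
The paper states Proposition~\ref{THORE} without proof, citing \cite{lmr} for the observation, so there is no internal argument to compare against. Your blind derivation is correct: the invariance identity $\sum_k P_{ik}u_{kj}=P_{ij}\cdot 1$ from the slice map, rewritten as $\sum_k(P_{ik}-P_{ij})u_{kj}=0$ and then collapsed via the column orthogonality $u_{kj}u_{lj}=\delta_{kl}u_{lj}$, cleanly yields $P_{il}=P_{ij}$ whenever $l\sim j$, and combining this with the row normalization $\sum_l P_{il}=1$ and the vanishing of $P_{il}$ for $l\not\sim i$ gives $P_{ij}=1/N_p$ when $i\sim j$. This is exactly the standard argument one finds in \cite{lmr} and \cite{bi3}: it uses nothing beyond the magic relations and bi-invariance of the Haar state, and your parenthetical remark about $(P_{ij})$ being the projection onto the trivial subrepresentation of $u$ is the right conceptual gloss.
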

In relation with above, we have as well the notion of orbitals, also from \cite{lmr}:

\begin{definition}
Given a quantum permutation group $G\subset S_N^+$, we define its orbitals as being the equivalence classes under the following relation:
$$(i,k)\sim(j,l)\iff u_{ij}u_{kl}\neq0.$$
We say that $G$ is doubly transitive when we have only two orbitals, which amounts to saying that we must have $u_{ij}u_{kl}\neq0$ for any $i\neq k$ and $j\neq l$.
\end{definition}

In what regards the higher orbitals, things do not seem to work here, as noticed in \cite{lmr}. As explained in \cite{mcc}, with examples, the 3-orbital
 can fail to be transitive when the diagonal elements of the magic unitary generate a commutative algebra.

\section{Finite graphs and the Frucht property}\label{SEC3}
Let $X=(V,E)$ be a finite graph, with vertex set $V$ of size $N$, and edge set $E$ a symmetric, irreflexive relation on $V$. When $(i,j)\in E$ then $(j,i)\in E$ also, but we draw only a single, undirected edge between vertices $i$ and $j$. That $X$ has no loops is accounted for by the edge relation assumed irreflexive.

\bigskip

 A permutation $\sigma\in S_N$ leaves invariant the edges precisely when it commutes with the adjacency matrix $d\in M_N(0,1)$. Thus, the algebra of functions on the automorphism group $G(X)\subset S_N$ is given by:
$$C(G(X))=C(S_N)/\langle du=ud\rangle.$$

By dropping the assumption that this algebra commutes, we are led to the following construction, going back to \cite{ba3}:

\begin{definition}
Given a graph $X$ having $N$ vertices, the construction
$$C(G^+(X))=C(S_N^+)/\langle du=ud\rangle, $$
where $d\in M_N(0,1)$ is the adjacency matrix of $X$, produces a closed subgroup
$$G^+(X)\subset S_N^+$$
called the quantum automorphism group of $X$.
\end{definition}

Observe that we have an inclusion $G(X)\subset G^+(X)$, which is a liberation, in the sense that the classical version of $G^+(X)$ is the group $G(X)$. This follows indeed from the fact that $S_N\subset S_N^+$ itself is a liberation, explained in section 1 above.

\bigskip

In the sequel we will be interested in quantum subgroups of $G^+(X)$ acting on $X$.
\begin{definition}
Let $X$ be a graph with adjacency matrix $d\in M_N(0,1)$. If $G\subset S_N^+$ such that $du=ud$, we write $G\curvearrowright X$, and have $G\subset G^+(X)$. On the other hand, where $X$ is a graph on $N$ vertices, writing $G\curvearrowright X$ implies a fixed embedding $G\subset S_N^+$ such that $du=ud$.
\end{definition}

\bigskip

As a first observation, quantum automorphisms do exist, for instance because the graph having $N\geq 4$ vertices and no edges has $S_N^+\neq S_N$ as quantum automorphism group. There are many other examples, and all this suggests the following definition:

\begin{definition}
Let $X$ be a finite graph with its classical and quantum automorphism groups $G(X)\subset G^+(X)$. We say that:
\begin{enumerate}
\item $X$ is vertex-transitive when $G(X)$ is transitive.

\item $X$ is quantum vertex-transitive when $G^+(X)$ is transitive.

\item $X$ has no quantum symmetry when $G^+(X)=G(X)$.
\end{enumerate}
\end{definition}

We are interested in determining how the following result from graph theory, established by Frucht in the 1930s, extends to the quantum group setting:

\begin{theorem}[Frucht]\label{FRT}
Every finite group  is the automorphism group of a  finite graph.
\end{theorem}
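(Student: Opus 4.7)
The plan is to construct, for any given finite group $H$, an explicit finite graph $X$ with $G(X)=H$. The starting point is Cayley's theorem: the left-regular action realises $H$ as a subgroup of $S_{|H|}$. Fix a generating set $S=\{s_1,\ldots,s_k\}$ of $H$, and consider the Cayley digraph $\Gamma(H,S)$, whose vertex set is $H$ and which has a directed edge from $h$ to $hs_i$ labelled (coloured) by $i$, for each $h\in H$ and each $s_i\in S$. A standard argument -- a colour- and direction-preserving automorphism of $\Gamma(H,S)$ is determined by the image of the identity, and left multiplications exhaust such automorphisms -- shows that the automorphism group of $\Gamma(H,S)$, taken as a coloured digraph, is precisely $H$ acting by left multiplication.

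The main content of Frucht's argument is to convert the coloured digraph $\Gamma(H,S)$ into an ordinary finite (undirected, uncoloured, loopless) graph $X$ whose automorphism group still equals $H$. The standard device is to replace each directed coloured edge by a rigid gadget: a small subgraph attached between the two endpoints that simultaneously encodes the direction of the edge and the colour $i\in\{1,\ldots,k\}$, and whose automorphism group, fixing the two endpoints setwise, is trivial. A classical choice subdivides each edge by two auxiliary vertices and attaches to them ``feathers'' (pendant paths) of two different, carefully chosen lengths -- say $2i$ on the tail-side vertex and $2i+1$ on the head-side vertex. The asymmetry of the feather lengths pins down the direction of the edge; the absolute lengths distinguish the $k$ different colours; and by taking all lengths pairwise distinct and sufficiently large, no feather of one gadget can be mistaken for part of another.

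Given this, any automorphism of $X$ must preserve gadgets setwise, map endpoints of a gadget to the endpoints of a gadget of the same colour in a direction-preserving way, and is therefore determined by its action on the original vertex set $H$. Restricted there, it becomes a colour- and direction-preserving automorphism of $\Gamma(H,S)$, hence a left multiplication by some $h\in H$. Conversely, each $h\in H$ extends from $\Gamma(H,S)$ to $X$ by transporting gadgets accordingly. This gives $G(X)=H$, proving the theorem.

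The main obstacle is the gadget construction: one must check that the feathers are rigid enough that no unintended automorphism of $X$ permutes gadgets across different colours, swaps directions, or moves feathers between gadgets, and simultaneously that every $h\in H$ really does lift from $\Gamma(H,S)$ to $X$. This is a purely combinatorial bookkeeping exercise, but it is the heart of the argument; the use of Cayley's theorem and the passage from $\Gamma(H,S)$ to $X$ are formal once the gadget is in place.
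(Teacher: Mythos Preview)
Your proposal is correct and follows essentially the same approach as the paper: build the coloured Cayley digraph, observe that its colour- and direction-preserving automorphism group is $H$, and then replace each coloured directed edge by an asymmetric gadget consisting of two internal subdivision vertices with pendant paths of distinct lengths encoding both the colour and the direction. The only difference is cosmetic---the paper uses pendant paths of lengths $k$ and $k+1$ for colour $k$, whereas you use $2i$ and $2i+1$---and the paper itself remarks immediately after its proof that the choice of asymmetric gadget is non-canonical.
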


\begin{proof}
Given a finite group $G$, let $S=\{s_1,\dots,s_m\}$ be a generating set that doesn't contain the identity nor both a non-involution and its inverse. Consider the Cayley graph $X_0=(G,E_0)$ with respect to $S$. The edge set $E_0\subset G\times G\times\{1,\dots,m\}$ consists of directed edges $(s_k^{-1}g_j,g_j,k)$ of colour $k$ between $s_k^{-1}g_{j}$ and $g_j$. It can be shown that the group of colour-preserving automorphisms of $X_0$ is isomorphic to $G$.  Where the vertical path graphs below contain $k$ and $k+1$ vertices respectively, replace each  directed  edge from $g_i$ to $g_j$ of colour $k$ in $X_0$, with a copy of the following asymmetric, undirected graph:
\begin{center}
\begin{tikzcd}
                  &                   & \bullet                  &         \\
                  &  \bullet                 & \bullet  \arrow[u,dash]                   &         \\
\underset{g_i}{\bullet} \arrow[r,dash] & \bullet \arrow[r,dash] \arrow[u,dashrightarrow,no head] & \bullet  \arrow[u,dashrightarrow,no head] \arrow[r,dash] & \underset{g_j}{\bullet}
\end{tikzcd}
\end{center}

It can be shown that the resulting finite graph has automorphism group $G$. See \cite{fru} for full details.
\end{proof}
We note that the choice of graph to replace a directed edge of color $k$ is non-canonical, and, being suitably careful, it is possible to instead use different sets of asymmetric graphs.

\bigskip

The question of whether this result extends to the quantum group setting is a natural one, and was posed explicitly in \cite{mro}, and implicitly in \cite{gro}.

\begin{definition}
A compact quantum group $G$ has the Frucht property if there exists a finite graph $X$ such that $G=G^+(X)$.
\end{definition}
The tables in Schmidt's thesis \cite{sc4} comprise a collection of some of the compact quantum groups known to have the Frucht property. 

\bigskip

We have the following negative result, coming from \cite{bbn}:

\begin{theorem}
The following happen:
\begin{enumerate}
\item Any finite group is a permutation group.

\item There are finite quantum groups which are not quantum permutation groups.

\item Thus there are finite quantum groups which do not have the Frucht property.
\end{enumerate}
\end{theorem}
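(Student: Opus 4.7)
The plan is to address the three claims in order, treating (1) as a classical warm-up, spending the bulk of the effort on (2), and then deducing (3) by a short inclusion argument.

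For part (1), I would invoke Cayley's theorem: the left regular action of a finite group $G$ on itself by multiplication yields a faithful homomorphism $G \hookrightarrow S_{|G|}$, realising $G$ as a permutation group on $|G|$ points.

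For part (2), the first step is to unpack the definition: a finite quantum group $G$ is a quantum permutation group precisely when $C(G)$ admits a fundamental magic representation whose entries generate $C(G)$, equivalently when there is an embedding $G \subset S_N^+$ for some $N$. The strategy is to contrast this with the classical situation. The naive quantum analogue of Cayley's theorem would use the regular representation of $G$ on $C(G)$, but the resulting fundamental unitary has entries that are linear combinations of Peter--Weyl matrix units, which in the non-commutative setting need not be projections; there is thus no automatic magic representation. The heart of the argument is to exhibit a specific finite quantum group for which \emph{every} candidate fundamental representation fails to be magic. I would appeal to the explicit examples constructed in \cite{bbn}, where the obstruction is verified by analysing the irreducible corepresentations of $C(G)$ and ruling out all possible assemblies of them into a magic matrix of any dimension.

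For part (3), the observation is that by definition $G^+(X) \subset S_N^+$ for $N = |V(X)|$; hence any compact quantum group of the form $G^+(X)$ comes equipped with a generating magic representation and is automatically a quantum permutation group. Combining this with (2) gives (3) by contraposition: a finite quantum group that is not a quantum permutation group cannot equal $G^+(X)$ for any finite graph $X$, and so fails the Frucht property. The main obstacle is clearly (2), since (1) is Cayley and (3) is a formal consequence; the genuine content is the representation-theoretic analysis needed to rule out magic representations for the counterexamples supplied by \cite{bbn}.
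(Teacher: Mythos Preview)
Your proposal is correct and follows essentially the same route as the paper: Cayley's theorem for (1), the observation that the regular representation does not yield a magic unitary together with the explicit counterexample from \cite{bbn} for (2), and the formal deduction of (3) from (2) via $G^+(X)\subset S_N^+$. The paper adds only the concrete detail that the smallest counterexample in (2) has order $24$ and arises as a split abelian extension associated to the factorisation $S_4=\mathbb Z_4 S_3$.
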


\begin{proof}
This is well-known, the idea being as follows:

\bigskip

(1) This is Cayley's theorem, with the regular representation of a finite group $G$ producing an embedding $G\subset S_N$, with $N=|G|$.

\bigskip

(2) The regular representation still exists for any finite quantum group $G$, but does not produce an embedding $G\subset S_N^+$. And the point is that the quantum Cayley theorem itself fails, with the simplest counterexample being something non-trivial, constructed in \cite{bbn}. To be more precise, the quantum Cayley theorem holds at $N\leq23$, but at $N=24$ we have a counterexample, which occurs as a split abelian extension associated to the exact factorisation $S_4 = \mathbb Z_4S_3$. See \cite{bbn}.

\bigskip

(3) This is a trivial consequence of (2).
\end{proof}

Here is another negative result, coming this time from \cite{lev}:

\begin{theorem}\label{TH3.5}
The following happen:
\begin{enumerate}
\item There are uncountably many quotients of $\mathbb Z_2*\mathbb Z_3$.

\item Thus there are uncountably many quantum subgroups of $S_5^+$.

\item Thus there are quantum permutation groups not of the form $G^+(X)$.
\end{enumerate}
\end{theorem}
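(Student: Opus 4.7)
Part (1) is the substantive content, which I would take directly from \cite{lev}. The underlying mechanism is that $\mathbb{Z}_2 * \mathbb{Z}_3 \cong PSL_2(\mathbb{Z})$ contains a non-abelian free subgroup of finite index, and hence admits a continuum of pairwise non-isomorphic quotients; concretely one may appeal to SQ-universality of $\mathbb{Z}_2 * \mathbb{Z}_3$, or more explicitly exhibit an uncountable family of $2$-generator torsion groups whose two generators have orders dividing $2$ and $3$ respectively.

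For part (2), every surjection $\mathbb{Z}_2 * \mathbb{Z}_3 \twoheadrightarrow \Gamma$ dualises to a closed embedding $\widehat{\Gamma} \subset \widehat{\mathbb{Z}_2 * \mathbb{Z}_3}$. The proposition on group dual subgroups in Section~\ref{SEC2}, applied with $k = 2$, $N_1 = 2$, $N_2 = 3$, then gives the chain
$$\widehat{\Gamma} \subset \widehat{\mathbb{Z}_2 * \mathbb{Z}_3} = \widehat{\mathbb{Z}}_2 \,\hat{*}\, \widehat{\mathbb{Z}}_3 \cong \mathbb{Z}_2 \,\hat{*}\, \mathbb{Z}_3 \subset S_2^+ \,\hat{*}\, S_3^+ \subset S_5^+,$$
and in particular an embedding $\widehat{\Gamma} \subset S_5^+$ with explicit magic unitary of the block-diagonal form recorded in that section. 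To pass from uncountably many quotients $\Gamma$ to uncountably many non-isomorphic quantum subgroups of $S_5^+$, I would invoke the standard fact that the discrete group $\Gamma$ is recovered from the Woronowicz algebra $(C^*(\Gamma), \Delta)$ as its set of group-like unitaries $\{g : \Delta(g) = g \otimes g\}$; hence non-isomorphic $\Gamma$ yield non-isomorphic compact quantum groups $\widehat{\Gamma}$.

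Part (3) is a cardinality argument. There are only countably many isomorphism classes of finite graphs, hence only countably many compact quantum groups of the form $G^+(X)$ up to isomorphism. Since (2) supplies uncountably many pairwise non-isomorphic compact quantum groups, each of them a quantum permutation group via its embedding into $S_5^+$, uncountably many of them must fail to arise as $G^+(X)$ for any finite graph $X$.

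The only non-formal step is (1), whose proof is the content of \cite{lev}; step (2) is a direct application of the dual-embedding machinery already set up in Section~\ref{SEC2} together with the recovery of $\Gamma$ from $(C^*(\Gamma),\Delta)$, and (3) is pure counting. The main conceptual obstacle one must confront, if one wants a self-contained argument rather than a citation, is the construction in (1) of continuum-many quotients with distinct isomorphism types.
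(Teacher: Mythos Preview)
Your proposal is correct and follows essentially the same route as the paper: cite Levin for SQ-universality of $\mathbb{Z}_2*\mathbb{Z}_3\cong PSL_2(\mathbb{Z})$ to get uncountably many non-isomorphic quotients, dualise via the Section~\ref{SEC2} embedding $\widehat{\Gamma}\subset S_5^+$, and finish with the cardinality of finite graphs. Your explicit remark that $\Gamma$ is recovered from $(C^*(\Gamma),\Delta)$ via group-likes is a useful addition the paper leaves implicit, since it is what guarantees that non-isomorphic $\Gamma$ yield non-isomorphic quantum subgroups rather than merely distinct surjections.
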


\begin{proof}
Once again this is well-known, the idea being as follows:

\bigskip

(1) Conjectured to be the case by Neumann \cite{ne2}, Levin \cite{lev} showed that the modular group $\operatorname{PSL}(2,\mathbb{Z})\cong \mathbb{Z}_2*\mathbb{Z}_3$ is SQ-universal. This means that every countable group can be embedded in a quotient of $\mathbb{Z}_2*\mathbb{Z}_3$, and this implies that $\mathbb Z_2*\mathbb Z_3$ has uncountably many quotients \cite{ne1}.

\bigskip

(2) This follows from (1), by taking group dual subgroups.

\bigskip

(3) This follows from (2), the set of finite graphs being countable.
\end{proof}
This non-constructive proof is unsatisfying, and  suggests looking at special classes of quantum permutation groups to which the arguments in the proof will not apply, such as:

\bigskip

\begin{enumerate}

\item The finite quantum permutation groups. By \cite{ste}, at each value of $|G|=\dim C(G)$ there are finitely many such quantum groups.

\bigskip

\item The duals of finite groups.
\end{enumerate}

\bigskip

We will investigate in what follows such questions.

\section{Intermediate liberations of finite groups}

We discuss in what follows the following notion, mainly coming from Schmidt's series of papers \cite{sc1,sc2,sc3}, and their follow-ups:

\begin{definition}\label{DEF5.1}
We call a finite group $G$ quantum-excluding when
$$G(X)=G\implies G^+(X)=G$$
holds for any finite graph $X$.
\end{definition}
It is an open question whether there is a quantum excluding group. Groups known not to be quantum-excluding include $S_N$ ($N\geq 4$), $D_4$, and $\mathbb{Z}_2\times \mathbb{Z}_2$. The question is open for cyclic groups, $S_3$, and $A_N$.

\bigskip

In relation with our Frucht type questions, discussed in section \ref{SEC3} above, assume that a finite group $G$ has a liberation $G\subset G^\times$, genuine in the sense that $C(G^\times)$ is noncommutative. Then:

\bigskip

(1) If $G$ is known to be quantum-excluding in the sense of Definition \ref{DEF5.1}, then $G^\times$ does not have the Frucht property.

\bigskip

(2) Equivalently, if $G^\times$ is known to be of the form $G^\times=G^+(X)$, then the existence of this graph $X$ shows that $G$ cannot be quantum-excluding.

\bigskip

In view of this, the construction of  liberations of finite groups $G\subset G^\times$ is an interesting question, that can be useful in the study of quantum automorphism groups, and perhaps more generally.

\bigskip

For a liberation of a compact group $G\subset G^\times \subset U_N^+$,  the standard reconstruction of the commutative  $\mathrm{C}^*$-algebra $C(G)$ from its liberation $C(G^\times)$ is via the abelianisation $C(G^\times)/I$, with $I\subset C(G^\times)$ being the commutator ideal.

\bigskip

When $G^\times\subset S_N^+$, there is an arguably more transparent approach to finding $G$, the classical version of $G^\times$. Consider a map $\Phi$ from the state space of $C(G^\times)$ to the Birkhoff polytope of $N\times N$ doubly stochastic matrices:
$$\Phi(\varphi)=\left[\varphi(u_{ij})\right]_{i,j=1}^N.$$
This map was used by Woronowicz to show that commutative Woronowicz algebras are algebras of continuous functions on compact groups $G\subset U_N$ \cite{wo1}.

\bigskip

Where $\jmath$ maps a permutation in $S_N$ to its permutation matrix in $M_N(\mathbb{C})$, each character $\omega:C(G^\times)\rightarrow \mathbb{C}$ is associated bijectively with some $\sigma\in S_N$ such that
$$\Phi(\omega)=\jmath(\sigma),$$
in which case we write $\omega=\operatorname{ev}_{\sigma}$. These characters form a group with identity $\varepsilon$ and group law given by the convolution:
$$\operatorname{ev}_{\sigma_1}\star \operatorname{ev}_{\sigma_2}=(\operatorname{ev}_{\sigma_1}\otimes \operatorname{ev}_{\sigma_2})\Delta,$$
such that:
\begin{equation}\label{EQPHI}\Phi(\operatorname{ev}_{\sigma_1}\star \operatorname{ev}_{\sigma_2})=\Phi(\operatorname{ev}_{\sigma_1})\Phi(\operatorname{ev}_{\sigma_2})=\Phi(\operatorname{ev}_{\sigma_1\sigma_2}).\end{equation}
This finite group  is the same as the classical version $G_{\text{class}}$ coming from the abelianisation $C(G^\times)/I$.
  When $G^\times$ is finite and so $C(G^\times)$ is a multi-matrix algebra,  the one-dimensional summands correspond precisely to the characters. See \cite{mcc} for more.

\bigskip

  We recall that for any finite group $G$ we have a Peter--Weyl decomposition, as follows:
$$C(\widehat{G})=\bigoplus_{r\in \operatorname{Irr}(G)}M_{\dim(r)}(\mathbb C).$$
We are interested in what follows in the one-dimensional representations, of which there is of course at least the trivial representation $G\to\{1\}$, corresponding to the counit $\varepsilon:C(\widehat{G})\rightarrow \mathbb{C}$.

\bigskip

Using these ideas,  we have the following elementary result:

\begin{proposition}
Assume that a finite group $G$ has $N$ one-dimensional representations. Then the classical version $\widehat{G}_{\text{class}}$ is an abelian group of order $N$.
\end{proposition}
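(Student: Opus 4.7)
The plan is to identify $\widehat{G}_{\text{class}}$ by explicitly abelianising the Woronowicz algebra $C(\widehat{G}) = \mathrm{C}^*(G)$, which for a finite group is nothing but the group algebra $\mathbb{C}[G]$. I will use the Peter--Weyl decomposition (recalled in the excerpt just above the statement) as the bookkeeping device that makes the abelianisation transparent, then transfer back the quantum-group structure to read off that the resulting classical object is an abelian group of the prescribed order.

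First I would write
\[
C(\widehat{G}) = \bigoplus_{r\in\operatorname{Irr}(G)} M_{\dim(r)}(\mathbb{C}),
\]
and compute the commutator ideal $I\subset C(\widehat{G})$. Each block $M_k(\mathbb{C})$ with $k\geq 2$ is a simple algebra containing nonzero commutators (e.g.\ $[E_{12},E_{21}]$), and so the two-sided ideal generated by its commutators is the whole block; the one-dimensional blocks are already commutative and contribute nothing to $I$. Hence
\[
C(\widehat{G})/I \;\cong\; \bigoplus_{\dim(r)=1}\mathbb{C} \;\cong\; \mathbb{C}^N,
\]
so the Gelfand spectrum of the abelianisation is a finite set of cardinality exactly $N$. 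This is the underlying set of $\widehat{G}_{\text{class}}$.

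Next I would argue that this finite set carries a group structure of order $N$. Since the comultiplication $\Delta$, counit $\varepsilon$, and antipode $S$ on $C(\widehat{G})$ all descend to the commutative quotient $C(\widehat{G})/I\cong\mathbb{C}^N$, the latter becomes a commutative finite-dimensional Woronowicz algebra. By the Gelfand-type theorem recalled in the paper after Definition \ref{DEF1.1}, any such algebra is $C(H)$ for a compact (necessarily finite, of order $N$) group $H$, and $H=\widehat{G}_{\text{class}}$ by definition.

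The main conceptual point, and the only step that requires care, is abelianness. Here I would exploit the fact that $C(\widehat{G}) = \mathrm{C}^*(G)$ is cocommutative, $\Sigma\Delta = \Delta$, in the sense of the excerpt. The cocommutativity passes to the quotient $C(\widehat{G})/I$, so this quotient is both commutative and cocommutative. Equivalently, for any two characters $\omega_1,\omega_2$ of $C(\widehat{G})$ one has, using cocommutativity in the second equality,
\[
\omega_1 \star \omega_2 = (\omega_1\otimes\omega_2)\Delta = (\omega_1\otimes\omega_2)\Sigma\Delta = \omega_2 \star \omega_1,
\]
so the group law on $\widehat{G}_{\text{class}}$ (given by convolution of characters as in (\ref{EQPHI})) is abelian. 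Combined with the count $|\widehat{G}_{\text{class}}| = N$ from the abelianisation step, this gives the claim.
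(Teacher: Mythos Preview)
Your proof is correct and follows essentially the same line as the paper's: abelianness comes from cocommutativity of $C(\widehat{G})$ (the paper phrases this as $C(\widehat{G})^*\simeq C(G)$, so that convolution of characters is pointwise multiplication of functions on $G$ and hence commutative), while the order count comes from identifying characters with the one-dimensional summands in the Peter--Weyl decomposition. Your write-up is considerably more detailed than the paper's one-sentence proof, which argues only the abelianness and leaves the order-$N$ count to the surrounding context.
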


\begin{proof}
The classical version $\widehat{G}_{\text{class}}$ is indeed abelian because $C(\widehat{G})^*\simeq C(G)$, and therefore the convolution of characters on $C(\widehat{G})$ is commutative.
\end{proof}

\begin{theorem}
Any finite group $G$ has a liberation.\label{PROPLIB}
\end{theorem}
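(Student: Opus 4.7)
The plan is to exhibit a genuine liberation of $G$ as a direct product with the dual of a finite non-abelian perfect group, leveraging the preceding proposition to keep the classical version under control.

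First I would fix a finite non-abelian perfect group $\Gamma$, for instance $\Gamma=A_5$. Because $\Gamma=[\Gamma,\Gamma]$, the only one-dimensional representation of $\Gamma$ is the trivial one, so by the preceding proposition the classical version of $\widehat{\Gamma}$ is the trivial group. At the same time $C(\widehat{\Gamma})\cong\mathbb{C}[\Gamma]$ is noncommutative, since $\Gamma$ is non-abelian; in particular $\widehat{\Gamma}$ is already a genuine liberation of the trivial group.

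Next, I would set $G^\times:=G\times\widehat{\Gamma}$, with coordinate algebra $C(G^\times)=C(G)\otimes\mathbb{C}[\Gamma]$, comultiplication $\Delta_G\otimes\Delta_{\widehat{\Gamma}}$, and fundamental representation $u^G\oplus u^{\widehat{\Gamma}}$. As both factors are finite, this is a finite compact matrix quantum group, and it contains $G$ as a closed quantum subgroup via the surjective Woronowicz morphism $\operatorname{id}\otimes\varepsilon_{\widehat{\Gamma}}:C(G)\otimes\mathbb{C}[\Gamma]\to C(G)$.

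Finally, I would compute the abelianisation
$$\bigl(C(G)\otimes\mathbb{C}[\Gamma]\bigr)^{ab}=C(G)^{ab}\otimes\mathbb{C}[\Gamma]^{ab}=C(G)\otimes\mathbb{C}=C(G),$$
so the classical version of $G^\times$ is $G$ itself, while $C(G^\times)$ is visibly noncommutative. Hence $G\subset G^\times$ is a genuine liberation. The only routine verification is the compatibility of abelianisation with tensor products of unital $\mathrm{C}^*$-algebras, together with the identification $\mathbb{C}[\Gamma]^{ab}\cong\mathbb{C}[\Gamma^{ab}]=\mathbb{C}$; once the right $\Gamma$ is chosen there is no substantive obstacle.
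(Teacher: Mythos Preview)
Your proof is correct and takes essentially the same approach as the paper: both form $G^\times = G \times \widehat{\Gamma}$ with $\Gamma$ a finite non-abelian perfect group (the paper also suggests $A_5$) and then check that the classical version is $G$. The only cosmetic difference is that you compute the abelianisation of the tensor product directly, whereas the paper counts one-dimensional summands and identifies the character group via the map $\Phi$; the paper also records an explicit magic representation $G\times\widehat{H}\subset S_{N+M}^+$, which you omit but which is not required for the bare statement.
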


\begin{proof}
For small cyclic groups, by looking at the relevant degrees of the representations,  we obtain liberations as follows:
$$\mathbb Z_1\subset\widehat{A}_5,$$
$$\mathbb Z_2\subset\widehat{S}_3,$$
$$\mathbb Z_3\subset\widehat{A}_4.$$
In general, we will make use of a perfect group, which is by definition a group that equals its commutator subgroup. Such a group has a unique representation of degree one, and therefore the classical version of its dual is trivial.

\bigskip

In practice we can take $A_5$, which is the smallest perfect group which will produce  liberations, via our procedure. If we take:
$$A_5=\langle(123),(345)\rangle$$
we have an embedding $\widehat{A}_5\subset S_6^+$.

\bigskip

We embed $G\subset S_N$, say via the Cayley theorem, and we consider a non-trivial finite perfect group $H=\langle h_1,\ldots,h_k\rangle $. Let us set:
$$M=\operatorname{ord}(h_1)+\ldots+\operatorname{ord}(h_k).$$

Consider the tensor product of $C(G)$ and $C(\widehat{H})$, which is given by:
$$C(G)\otimes C(\widehat{H})=C(G\times \widehat{H}).$$

This construction gives a  quantum permutation group $G\times\widehat{H}\subset S_{N+M}^+$, with fundamental magic representation as follows:
$$u^{G\times \widehat{H}}=
\begin{pmatrix}
u^G\otimes1_{\widehat{H}}&0 \\
0&1_G\otimes u^{\widehat{H}}
\end{pmatrix}.$$

Using $M_{N_1}(\mathbb{C})\otimes M_{N_2}(\mathbb{C})\simeq M_{N_1\times N_2}(\mathbb{C})$, note that as $C(G)$ has $|G|$ one-dimensional factors, and $C(\widehat{H})$ has a single one-dimensional factor, $C(G\times \widehat{H})$ has $|G|$ one-dimensional factors, and thus $|G|$ characters.

\bigskip

It remains to show the group of characters coincides with $G$. Characters on $C(G)$ are of the form $\operatorname{ev}_\sigma$:
$$\operatorname{ev}_\sigma:C(G)\rightarrow \mathbb{C},\qquad f\mapsto f(\sigma).$$
There is a single character on $C(\widehat{H})$, necessarily the counit, $\operatorname{ev}_e$.

\bigskip

The tensor product of $\operatorname{ev}_{\sigma}$ on $C(G)$ and $\operatorname{ev}_e$ on $C(\widehat{H})$ is a character on $C(G\times\widehat{H})$, and we have
$$\Phi(\operatorname{ev}_{\sigma}\otimes \operatorname{ev}_e)=
\begin{pmatrix}
\jmath(\sigma)&0\\
0&I_M
\end{pmatrix},$$
and via (\ref{EQPHI}) we see that the group law on the classical version of $G\times \widehat{H}$ is the same as that of $G$, as desired.
\end{proof}
The above argument is deliberately  ad-hoc for $N\leq 3$ as $\widehat{A}_5$, $\widehat{S}_3$, $\widehat{A}_4$ provide small finite quantum groups to study in the context of quantum automorphisms.

\section{Finite graphs and orbitals}\label{SEC5}
In this section we connect the theory of orbitals with an old observation of Bichon. The complement of $E$ is taken with respect to $V\times V$ minus the diagonal elements:
$$\{(v_1,v_2):v_1,v_2\in V,v_1\neq v_2\}.$$

\begin{proposition}
Suppose that $G\subset S_N^+$. If $X$ is a graph on $N$ vertices, $du=ud$ is equivalent to
\begin{align*}(i,k)\in E,\,(j,l)\in E^c &\implies u_{ij}u_{kl}=0,\text{ and}
\\ (i,k)\in E^c,\,(j,l)\in E& \implies u_{ij}u_{kl}=0.\end{align*}
\end{proposition}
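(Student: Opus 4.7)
The plan is to recast the two bulleted conditions as the single algebraic identity
$$d_{ik}\,u_{ij}u_{kl} = d_{jl}\,u_{ij}u_{kl}\qquad(\forall\,i,j,k,l),$$
which is legitimate because $d_{ik},d_{jl}\in\{0,1\}$ are scalars: when the two scalars agree, the identity is automatic, and when they disagree (in one direction or the other), it reduces to one of the two vanishing statements. Once this reformulation is in place, both directions become a matter of moving the given data past a magic unitary, using only the row/column orthogonality relations $u_{ij}u_{im}=\delta_{jm}u_{ij}$ and $u_{ml}u_{kl}=\delta_{mk}u_{kl}$.

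For the forward direction, I would start with the $(i,l)$-entry of $du=ud$, namely $\sum_m d_{im}u_{ml}=\sum_m u_{im}d_{ml}$, and multiply both sides on the left by $u_{ij}$. On the right-hand side the row relation $u_{ij}u_{im}=\delta_{jm}u_{ij}$ collapses the sum to $d_{jl}u_{ij}$, yielding
$$\sum_m d_{im}\,u_{ij}u_{ml} \;=\; d_{jl}\,u_{ij}.$$
Multiplying this on the right by $u_{kl}$ and invoking the column relation $u_{ml}u_{kl}=\delta_{mk}u_{kl}$ collapses the left-hand sum to $d_{ik}u_{ij}u_{kl}$, giving exactly the identity above, and hence both bullets.

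For the reverse direction, I would assume the identity $d_{ik}u_{ij}u_{kl}=d_{jl}u_{ij}u_{kl}$ and compute $u_{ij}(du-ud)_{il}$ in two ways. Using the identity with $k$ replaced by the summation index $m$, the first sum becomes $\sum_m d_{im}u_{ij}u_{ml}=d_{jl}u_{ij}\sum_m u_{ml}=d_{jl}u_{ij}$ (by totality of the column sum), while row orthogonality gives $\sum_m u_{ij}u_{im}d_{ml}=u_{ij}d_{jl}$ for the second sum. These coincide, so $u_{ij}(du-ud)_{il}=0$ for every $j$; summing over $j$ and using $\sum_j u_{ij}=1$ produces $(du-ud)_{il}=0$.

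There is no genuine obstacle here: the argument is a bookkeeping exercise in which the only ``move'' available is to sandwich an entry of the matrix equation $du=ud$ between two magic projections so that one orthogonality relation peels off one scalar $d$ on each side. The content is really just recognising that this particular two-sided sandwiching isolates the pair $(d_{ik},d_{jl})$ from the sums, and that nothing needs to be said about the diagonal because $E$ is irreflexive, so $d_{ii}=0$ automatically makes the identity trivial when any coincidence of indices would otherwise cause concern.
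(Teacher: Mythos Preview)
Your argument is correct. The paper itself does not prove this proposition but merely cites Bichon \cite{bi2} and Chassaniol (Prop.~3.5, \cite{ch1}); the sandwiching computation you carry out---multiplying an entry of $du=ud$ on the left by $u_{ij}$ and on the right by $u_{kl}$ to isolate the scalar identity $d_{ik}\,u_{ij}u_{kl}=d_{jl}\,u_{ij}u_{kl}$, and then summing back for the converse---is exactly the standard proof found in those references, so you have supplied what the paper omits rather than taken a different route.
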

\begin{proof}
  This goes back to Bichon \cite{bi2}. A recent presentation is (Prop. 3.5, \cite{ch1}).
\end{proof}
Classically this can be understood as follows: suppose that $(j,l)\in E$ and $(i,k)\in E^c$:

\medskip

\begin{center}
\begin{tikzcd}
\substack{l\\ \bullet} \arrow[d, no head] & \substack{k\\ \bullet} \\
\substack{\bullet\\ j}                    & \substack{\bullet\\ i}
\end{tikzcd}
\end{center}

\medskip

 If a  graph automorphism $\sigma:V\rightarrow V$ maps $l\rightarrow k$,  it cannot also map $j\rightarrow i$ as this would violate
 $$(j,l)\in E\implies (\sigma(j),\sigma(l))=(i,k)\in E.$$

\bigskip

We will make great use of the contrapositive:
\begin{proposition}\label{Bichobs}
Suppose that $G\subset S_N^+$.  If $X$ is a graph on $N$ vertices, $du=ud$ is equivalent to
$$u_{ij}u_{kl}\neq 0\quad \implies \quad (i,k),(j,l)\in E\text{ or }(i,k),(j,l)\in E^c.$$
\end{proposition}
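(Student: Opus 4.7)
The plan is to derive this statement as a direct logical consequence of the preceding proposition via contraposition. That proposition asserts that $du=ud$ is equivalent to the conjunction of the two implications
\[
(i,k)\in E,\ (j,l)\in E^c \implies u_{ij}u_{kl}=0,
\]
and
\[
(i,k)\in E^c,\ (j,l)\in E \implies u_{ij}u_{kl}=0.
\]
Taking the contrapositive of each and combining them, I would observe that $u_{ij}u_{kl}\neq 0$ rules out the two ``mixed'' configurations in which exactly one of the pairs $(i,k)$, $(j,l)$ belongs to $E$. Hence both pairs must lie in $E$ or both must lie in $E^c$, which is precisely the desired conclusion; conversely, the stated implication clearly gives back the two implications of the previous proposition.

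Since the equivalence in the previous proposition is already established, the proof should reduce essentially to this one-line logical rearrangement. The only care required is the diagonal: because $E^c$ is taken inside $V\times V$ minus the diagonal, a pair of the form $(i,i)$ lies in neither $E$ nor $E^c$. I would handle this by invoking the magic property of $u$. Whenever $i=k$ and $j\neq l$, the entries $u_{ij}$ and $u_{il}$ are orthogonal projections in the same row of $u$, so $u_{ij}u_{il}=0$; the hypothesis $u_{ij}u_{kl}\neq 0$ is then automatically violated, and the case is vacuous. The case $i\neq k$, $j=l$ is symmetric, and the fully degenerate case $i=k$, $j=l$ reduces to $u_{ij}^2=u_{ij}$, where the conclusion about the pairs $(i,i)$, $(j,j)$ is trivially compatible with the statement.

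The main ``obstacle'' is therefore bookkeeping rather than content: one must verify that the magic condition really does kill off all the degenerate diagonal possibilities, so that the clean equivalence with the previous proposition is a faithful logical inversion. Once this is confirmed, the proof amounts to little more than a pointer to the preceding proposition and the observation that the displayed implication is exactly its contrapositive.
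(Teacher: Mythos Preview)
Your approach is correct and matches the paper's exactly: the paper introduces this proposition with the phrase ``We will make great use of the contrapositive'' and gives no separate proof, so your reduction to the preceding proposition via contraposition is precisely what is intended. If anything you supply more detail than the paper does, in particular the check that the magic condition disposes of the degenerate cases $i=k$, $j\neq l$ and $i\neq k$, $j=l$; note only that in the fully diagonal case $i=k$, $j=l$ the pair $(i,i)$ lies in neither $E$ nor $E^c$ under the paper's convention, so ``trivially compatible'' is a slight overstatement---this case is simply outside the intended scope of the implication, as it is for the preceding proposition.
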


\bigskip

 The following is contained in \cite{lmr} and is related to the notion of a coherent configuration. The diagonal relation is on $\{1,\dots,N\}$:

\begin{theorem} Orbitals of $G\subset S_N^+$ are either subsets of the diagonal relation (corresponding to orbits), or disjoint of the diagonal relation. For each orbital
$$o=\{(i,k)\colon (i,k)\in o\},$$
its inverse
$$o^{-1}=\{(k,i)\colon (i,k)\in o\}$$
is also an orbital (and it can happen that $o=o^{-1}$).
\end{theorem}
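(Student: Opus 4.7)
The plan is to exploit two elementary properties of any magic unitary $u$: (a) distinct entries in the same row (resp.\ column) are mutually orthogonal projections, so their product vanishes; and (b) each $u_{ij}$ is a self-adjoint projection, so $u_{ij}u_{kl}$ and $u_{kl}u_{ij}$ are mutual adjoints and hence vanish together. Both ingredients are immediate from the definition of ``magic''.

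For the first claim (orbitals are either contained in or disjoint from the diagonal), I would work with the equivalence relation generated by the defining relation and proceed by induction on chain length. Suppose $(i,i)$ and $(j,l)$ lie in the same orbital; then there is a sequence $(a_0,b_0)=(i,i),(a_1,b_1),\ldots,(a_n,b_n)=(j,l)$ such that at each step either $u_{a_s a_{s+1}}u_{b_s b_{s+1}}\neq 0$ (forward edge) or $u_{a_{s+1}a_s}u_{b_{s+1}b_s}\neq 0$ (reversed edge). The inductive step is: if $a_s=b_s$, then $a_{s+1}=b_{s+1}$. In the forward case, both $u_{a_s a_{s+1}}$ and $u_{a_s b_{s+1}}$ lie in row $a_s$ of $u$, so by (a) they would be orthogonal projections if $a_{s+1}\neq b_{s+1}$, contradicting nonvanishing of their product; the reversed case is identical using column $a_s$ instead. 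Induction then forces the whole chain onto the diagonal, so $j=l$. For the parenthetical ``corresponding to orbits'', once the chain lives on the diagonal, each single step $(a,a)\sim(b,b)$ reduces (via $u_{ab}^2=u_{ab}$) to $u_{ab}\neq 0$, which is precisely the orbit relation of Definition \ref{DEF2.1}; this furnishes the bijection between diagonal orbitals and orbits.

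For the inverse statement, I would introduce the involution $\iota\colon(i,k)\mapsto(k,i)$ on $\{1,\ldots,N\}^2$. Observation (b) tells us that $u_{ij}u_{kl}\neq 0$ if and only if $u_{kl}u_{ij}\neq 0$, which is exactly the statement $(i,k)\sim(j,l)\iff\iota(i,k)\sim\iota(j,l)$. Hence $\iota$ is an automorphism of the defining relation, and so of the equivalence relation it generates, meaning it permutes orbitals. For any orbital $o$ we therefore obtain that $\iota(o)=o^{-1}$ is again an orbital; since $\iota$ is an involution, the possibility $o=o^{-1}$ arises whenever $o$ happens to be stable under $\iota$ (as is the case classically for symmetric orbits of the diagonal action).

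The main obstacle is cosmetic rather than substantive: the relation $\sim$ is not a priori symmetric, so in the generated equivalence relation one must allow chains that traverse edges in both directions. This means the diagonal-preservation induction must handle both the ``forward'' and ``reversed'' cases, and the crucial point is that a magic unitary enjoys the orthogonality property (a) simultaneously along rows and along columns, making both cases of the induction go through by the same reasoning.
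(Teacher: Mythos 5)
The paper does not supply its own proof of this statement; it simply cites Lupini, Man\v cinska and Roberson \cite{lmr} and remarks on the connection with coherent configurations. Your argument is a correct, self-contained proof of both claims and is in the spirit of what \cite{lmr} establish. One cosmetic point: the paper's definition of orbitals already presupposes, again deferring to \cite{lmr}, that $(i,k)\sim(j,l)\iff u_{ij}u_{kl}\neq 0$ is an equivalence relation, so the extra bookkeeping with forward versus reversed edges in a generated equivalence is not strictly needed; it costs nothing, though, and has the virtue of making your proof independent of that input. The substance is exactly right: mutual orthogonality of magic entries along a row, respectively a column, forces any chain starting at a diagonal pair to stay diagonal at each step, so diagonal and off-diagonal pairs cannot share an orbital; the identity $u_{ab}^2=u_{ab}$ then identifies $(a,a)\sim(b,b)$ with the orbit relation $u_{ab}\neq 0$; and since $u_{ij}u_{kl}$ and $u_{kl}u_{ij}$ are mutual adjoints, one vanishes precisely when the other does, so $\iota\colon(i,k)\mapsto(k,i)$ is an automorphism of the defining relation, permutes its equivalence classes, and sends $o$ to $o^{-1}$.
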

By labelling the vertices $V=\{1,2,\dots,N\}$, if $G\curvearrowright X$, the orbitals of $G\subset S_N^+$ are subsets of $V\times V$.
\begin{proposition}\label{orbitalgraph}
Suppose that $G\curvearrowright X$. We have, where $o\subset V\times V$ is the orbital of $(i,k)$,
$$(i,k)\in E\implies (o\cup o^{-1})\subset E.$$
Similarly
$$(i,k)\in E^c\implies (o\cup o^{-1})\subset E^c.$$
\end{proposition}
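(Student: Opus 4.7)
The plan is to derive this statement as a direct consequence of Proposition \ref{Bichobs}, together with the fact that both $E$ and $E^c$ are symmetric relations on $V$ (the graph being undirected and loopless).

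First I would fix $(i,k) \in E$ with orbital $o$, and take an arbitrary $(j,l) \in o$. Since $(i,k)$ and $(j,l)$ lie in the same equivalence class of the orbital relation, one has $u_{ij} u_{kl} \neq 0$. Proposition \ref{Bichobs} then forces the two pairs $(i,k)$ and $(j,l)$ to lie together in $E$ or together in $E^c$; as $(i,k) \in E$ by assumption, this yields $(j,l) \in E$, and hence $o \subset E$. In the (harmless) event that the orbital relation is only obtained as the transitive closure of $u_{ij}u_{kl}\neq 0$, one runs the same step along a finite chain $(i,k) = (i_0,k_0) \sim (i_1,k_1) \sim \dots \sim (i_n,k_n) = (j,l)$, concluding by induction that each link, and in particular the endpoint, lands in $E$.

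To pass from $o$ to $o \cup o^{-1}$, I would use that $E$ is a symmetric subset of $V \times V$: any $(l,j) \in o^{-1}$ satisfies $(j,l) \in o \subset E$, and symmetry of $E$ gives $(l,j) \in E$. Thus $o^{-1} \subset E$ as well, and $o \cup o^{-1} \subset E$, which is the first implication.

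The second implication, with $(i,k)\in E^c$, is word-for-word identical after interchanging the roles of $E$ and $E^c$ throughout, using that $E^c$ (the complement taken inside the off-diagonal part of $V \times V$) is also symmetric, and that Proposition \ref{Bichobs} treats $E$ and $E^c$ on an equal footing. There is no serious obstacle here: all the real content sits in Proposition \ref{Bichobs}, and the present proposition is essentially its translation into the language of orbitals.
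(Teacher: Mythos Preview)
Your proof is correct and follows essentially the same route as the paper's: apply Proposition~\ref{Bichobs} to an arbitrary $(j,l)$ in the orbital $o$ of $(i,k)$ to force $(j,l)\in E$, then invoke symmetry of $E$ to handle $o^{-1}$, and repeat verbatim for $E^c$. Your remark about a possible transitive-closure issue is unnecessary here, since the paper defines the orbital relation directly by $(i,k)\sim(j,l)\iff u_{ij}u_{kl}\neq 0$ and takes as given (from \cite{lmr}) that this is already an equivalence relation; but the extra caution does no harm.
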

\begin{proof}
This follows from the definition of an orbital and Proposition \ref{Bichobs}: suppose that $(i,k)\in E$ and $(j,l)\in o$. Then
$$u_{ij}u_{kl}\neq 0\implies (j,l)\in E\implies o\subset E.$$
With finite graphs assumed symmetric, $o\subset E\implies o^{-1}\subset E$. The argument is the same if we suppose $(i,k)\in E^c$ and $(j,l)\in o$.
\end{proof}

\bigskip

As the edge relation is irreflexive,  the diagonal orbitals are subsets of $E^c$. The next result provides us with a complete list of the finite graphs on $N$ vertices that $G\subset S_N^+$ acts on:

\begin{theorem}\label{orbitalgraph2}
Let $O$ be the set of orbitals of $G\subset S_N^+$ disjoint of the diagonal relation. For each  $A\subset O$ define a graph $X_A=(\{1,\dots,N\},E_A)$ where
$$E_A:=\bigcup_{o\in A}(o\cup o^{-1}).$$
Then $G\curvearrowright X_A$. Furthermore, if $G\curvearrowright X$ for $X$ a graph on $N$ vertices, then $X=X_A$ for some $A\subset O$.
\end{theorem}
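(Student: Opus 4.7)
The plan is to verify the two assertions in turn, leaning on Proposition \ref{Bichobs} (Bichon's reformulation of $du=ud$) for the first part and on Proposition \ref{orbitalgraph} for the second. The crucial background fact is that the orbitals partition $V\times V$, that diagonal orbitals are exactly those contained in the diagonal relation, and consequently the members of $O$ partition the off-diagonal pairs $\{(v_1,v_2):v_1\neq v_2\}$.

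First I would check that $X_A$ is genuinely a graph in the sense of the paper: $E_A$ is irreflexive because every $o\in O$ is disjoint from the diagonal relation by hypothesis, and $E_A$ is symmetric because $o\cup o^{-1}$ is symmetric for any $o$. To establish $G\curvearrowright X_A$, Proposition \ref{Bichobs} reduces the task to showing that whenever $u_{ij}u_{kl}\neq 0$, the pairs $(i,k)$ and $(j,l)$ lie together in $E_A$ or together in $E_A^c$. But $u_{ij}u_{kl}\neq 0$ places $(i,k)$ and $(j,l)$ in a common orbital $o'$. If $o'$ is diagonal then both pairs sit on the diagonal, hence in $E_A^c$ by irreflexivity. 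Otherwise $o'\in O$, and unwinding the definition of $E_A$ shows that a pair in $o'$ belongs to $E_A$ iff $o'\in A$ or $(o')^{-1}\in A$; this criterion depends on $o'$ alone, so the memberships of $(i,k)$ and $(j,l)$ must agree.

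For the converse, I would take $X=(V,E)$ with $G\curvearrowright X$ and set $A=\{o\in O:o\subset E\}$. By Proposition \ref{orbitalgraph}, for each $o\in O$ either $o\cup o^{-1}\subset E$ or $o\cup o^{-1}\subset E^c$, so the definition of $A$ is unambiguous, and $A$ is closed under $o\mapsto o^{-1}$ because $E$ is symmetric. Since $E$ is irreflexive, it decomposes as the disjoint union of those non-diagonal orbitals it contains, which are precisely the members of $A$; hence
\[E=\bigsqcup_{o\in A}o=\bigcup_{o\in A}(o\cup o^{-1})=E_A,\]
so $X=X_A$.

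The only real point of care---scarcely a genuine obstacle---is that an orbital need not coincide with its inverse, which is why $E_A$ is defined using $o\cup o^{-1}$. One therefore has to verify, in the first part, that the $E_A$-membership of a pair in the orbital $o'$ is controlled by the unordered pair $\{o',(o')^{-1}\}$ rather than by $o'$ alone, and, in the second part, to observe that the natural choice of $A$ is automatically stable under inversion because $E$ is symmetric. Both points are essentially bookkeeping, and the substance of the proof is the application of Propositions \ref{Bichobs} and \ref{orbitalgraph}.
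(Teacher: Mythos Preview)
Your proof is correct and follows essentially the same approach as the paper: Proposition \ref{Bichobs} for the forward direction and Proposition \ref{orbitalgraph} for the converse. If anything, you are more careful than the paper's terse proof, which simply says ``if $o'\in A^c$ then $(i,k),(j,l)\in E_A^c$'' without explicitly addressing the possibility $o'\notin A$ but $(o')^{-1}\in A$; your observation that $E_A$-membership depends on the unordered pair $\{o',(o')^{-1}\}$ handles this cleanly.
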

\begin{proof}
Suppose  that $u_{ij}u_{kl}\neq 0$, so that $(i,k)\sim (j,l)$ are in the same orbital, say $o'$. By construction of $X_A$, if $o'\in A$, both   $(i,k),\,(j,l)\in E_A$; and if $o'\in A^c$, then both $(i,k),\,(j,l)\in E_A^c$. Thus by Proposition \ref{Bichobs},  $G\curvearrowright X_A$.

\bigskip

That the set of graphs $\{X_A:A\subset O\}$ exhausts the graphs on $N$ vertices that $G\subset S_N^+$ acts on is Proposition \ref{orbitalgraph}.
\end{proof}

\begin{theorem}\label{orbitalgraph3}
Suppose $G\curvearrowright X$ with $O$ the set of orbitals disjoint of the diagonal relation. Suppose $\sigma_1,\dots,\sigma_k\in S_V$ leave the orbitals invariant: for all $\sigma_p\in S_V$ and $o\in O$,
$$(i,j)\in o\implies (\sigma_p(i),\sigma_p(j))\in o.$$
Then each $\sigma_p\in G(X)$, and so where $H=\langle\sigma_1,\dots,\sigma_k\rangle$,
$$H\subset G(X).$$
Finally
$$H \not\subset G_{\text{class}}\implies G\subsetneq G^+(X).$$
\end{theorem}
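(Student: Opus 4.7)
The plan has three stages, corresponding to the three conclusions of the theorem.

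First, I would invoke Theorem~\ref{orbitalgraph2} to realise $X$ as $X_A$ for some $A\subset O$, giving the explicit edge set $E_A=\bigcup_{o\in A}(o\cup o^{-1})$. The hypothesis says each $\sigma_p$ preserves every orbital $o\in O$ setwise, and it automatically preserves each $o^{-1}$ as well: given $(k,i)\in o^{-1}$, one has $(i,k)\in o$, hence $(\sigma_p(i),\sigma_p(k))\in o$, hence $(\sigma_p(k),\sigma_p(i))\in o^{-1}$. Consequently $\sigma_p$ permutes $E_A$ to itself, which is precisely the condition $\sigma_p\in G(X)$. Since $G(X)$ is a group, the inclusion $H=\langle\sigma_1,\dots,\sigma_k\rangle\subset G(X)$ follows immediately.

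For the final implication, I would argue by contrapositive, keeping track of three nested ``classical versions''. The given $G\curvearrowright X$ supplies an inclusion $G\subset G^+(X)$, and functoriality of the classical version (equivalently, of abelianisation of Woronowicz algebras) yields $G_{\text{class}}\subset G^+(X)_{\text{class}}=G(X)$, where the last equality is the liberation statement recorded in Section~\ref{SEC3}. If, for contradiction, $G=G^+(X)$ then $G_{\text{class}}=G(X)$, and combined with $H\subset G(X)$ this would force $H\subset G_{\text{class}}$, contradicting the hypothesis. Hence $G\neq G^+(X)$, and since $G\subset G^+(X)$ is given, the inclusion must be strict.

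The main obstacle is conceptual rather than computational: one has to keep the three levels $G_{\text{class}}\subset G\subset G^+(X)$ straight and recognise that the $\sigma_p$ act on $X$ as genuine classical graph automorphisms but need not be visible inside the classical part of $G$ itself. Once at least one $\sigma_p$ escapes $G_{\text{class}}$, the classical automorphism group $G(X)$ strictly exceeds $G_{\text{class}}$, forcing $G^+(X)$, whose classical version is $G(X)$, to strictly exceed $G$. No delicate orbital calculation beyond the trivial $o\mapsto o^{-1}$ check of the first stage should be required.
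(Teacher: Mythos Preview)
Your proposal is correct and follows essentially the same route as the paper: the paper's proof invokes Theorem~\ref{orbitalgraph2} for the first two conclusions and uses the contrapositive $G=G^+(X)\implies G_{\text{class}}=G(X)$ for the last, exactly as you do. Your version simply spells out the details the paper leaves implicit; note also that your $o\mapsto o^{-1}$ check, while correct, is slightly redundant since $o^{-1}\in O$ already and hence is preserved by hypothesis.
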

\begin{proof}
We have:
$$G= G^+(X)\implies G_{\text{class}}=G(X),$$
and so if $H \not\subset G_{\text{class}}$, we cannot have $G=G^+(X)$.  The rest follows immediately from Theorem \ref{orbitalgraph2}
\end{proof}

\bigskip

As an illustration, consider the quantum hyperoctahedral group $H_2^+\subset S_4^+$ freely generated by the magic unitary (the conventional presentation, from \cite{bbc}, is got from swapping labels two and three):
$$u=\begin{pmatrix}
              p_{11} & q_{11}  & p_{12} & q_{12} \\
              q_{11} & p_{11} & q_{12} & p_{12} \\
              p_{21} & q_{21} & p_{22} & q_{22} \\
              q_{21} & p_{21} & q_{22} & p_{22}
            \end{pmatrix}$$
Suppose $H_2^+\subset S_4^+$ acts on a finite graph $X$. By finding all $u_{ij}u_{kl}\neq 0$ we see that the orbitals disjoint of the diagonal relation are:
\begin{align*}
  o_1 & =\{(1,3),(1,4),(2,3),(2,4),(3,1),(3,2),(4,1),(4,2)\}, \\
  o_2 & =\{(1,2),(2,1),(3,4),(4,3)\}:
\end{align*}
\adjustbox{scale=1,center}{%
$$\begin{tikzcd}
\substack{1\\\bullet} \arrow[rd,no head] & \substack{2\\ \bullet} \arrow[ld,no head] & {} \arrow[d, no head] & \substack{1\\ \bullet} \arrow[r,no head] & \substack{2\\ \bullet} \\
\substack{\bullet\\ 4} \arrow[u,no head]  & \substack{\bullet\\ 3} \arrow[u,no head]  & {}                    & \substack{\bullet\\ 4} \arrow[r,no head] & \substack{\bullet\\ 3}
\end{tikzcd}$$
}

\bigskip

Where $s=(14)(23)$ is the `switch', and $b_i\in \{0,1\}$, each
$$\sigma_{(b_1,b_2,b_3)}=(34)^{b_3}(12)^{b_2}s^{b_1},$$
leaves the orbitals invariant.  It is the case that $\langle\sigma_{(b_1,b_2,b_3)}\rangle\cong D_4$, and it follows that for all graphs $X$ on four vertices such that $H_2^+\curvearrowright X$,
$$D_4\subset G(X).$$
Theorem \ref{orbitalgraph2} gives all these graphs:

\adjustbox{scale=1,center}{%
$$\begin{tikzcd}
  &   & {} \arrow[ddd, no head] &              &              & {}                      &             &   & {}                      &                                                             &                      \\
\substack{1\\ \bullet} & \substack{2\\ \bullet} &                         & \substack{1\\ \bullet} \arrow[rd,no head] & \substack{2\\ \bullet} \arrow[ld,no head] &                         & \substack{1\\ \bullet} \arrow[r,no head] & \substack{2\\ \bullet} &                         & \substack{1\\ \bullet} \arrow[r, no head] \arrow[d, no head] \arrow[rd, no head] & \substack{2\\ \bullet} \arrow[d, no head] \\
\substack{\bullet\\ 4} & \substack{\bullet\\ 3} &                         & \substack{\bullet \\ 4} \arrow[u,no head]  & \substack{\bullet\\ 3} \arrow[u,no head]  &                         & \substack{\bullet \\ 4} \arrow[r,no head] & \substack{\bullet \\ 3} &                         & \substack{\bullet \\ 4} \arrow[r, no head] \arrow[ru, no head]                    & \substack{\bullet\\ 3}                    \\
  &   & {}                      &              &              & {} \arrow[uuu, no head] &             &   & {} \arrow[uuu, no head] &                                                             &
\end{tikzcd}$$
}
Indeed we know from \cite{bi2} that  $H_{2,\text{class}}^+=D_4$, $G^+(\Box)=H_2^+$, and so $H_2^+$ has the Frucht property.

\bigskip

\begin{proposition}
Suppose that $G\curvearrowright X$ is given by a block magic unitary
$$u=\begin{pmatrix}
      u^1 &  & \\
       & \ddots &  \\
       &  & u^k
    \end{pmatrix}.$$
    with associated blocks $V_1,\dots,V_k$ forming a partition of $V$. Each orbital is a subset of some $V_p\times V_q$.
\end{proposition}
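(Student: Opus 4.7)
The plan is to unpack the definition of orbital and combine it with the observation that block-diagonality forces off-diagonal-block entries of $u$ to vanish. The orbital of $(i,k)$ is its equivalence class under $(i,k)\sim(j,l)\iff u_{ij}u_{kl}\neq 0$, so I will show that the two factors $u_{ij}$ and $u_{kl}$ being nonzero already pins down the blocks of $j$ and $l$.

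First I would record the following consequence of the block form of $u$: if $i\in V_p$ and $j\in V_q$ with $p\neq q$, then $u_{ij}$ lies outside every diagonal block $u^1,\dots,u^k$, and is therefore $0$. Equivalently, $u_{ij}\neq 0$ forces $i$ and $j$ to lie in the same block $V_p$. (One can also phrase this via the orbit relation of Definition \ref{DEF2.1}: the orbits of $G\subset S_N^+$ with respect to this block magic unitary are exactly the sets $V_1,\dots,V_k$.)

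Now fix an orbital $o$ and pick a representative $(i,k)\in o$, with $i\in V_p$ and $k\in V_q$. For any other $(j,l)\in o$ we have $u_{ij}u_{kl}\neq 0$, so in particular both $u_{ij}\neq 0$ and $u_{kl}\neq 0$. The previous paragraph then gives $j\in V_p$ and $l\in V_q$, i.e.\ $(j,l)\in V_p\times V_q$. Hence $o\subset V_p\times V_q$, as claimed.

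There is no real obstacle here: the statement is essentially immediate once one notes that block-diagonality of $u$ means off-block entries vanish, and that the orbital relation requires a product of two entries of $u$ to be nonzero. The only point worth flagging is that the block $V_p$ containing $i$ and the block $V_q$ containing $k$ are in general different, so each orbital is confined to a single ``rectangle'' $V_p\times V_q$ of the partition $V\times V=\bigsqcup_{p,q}V_p\times V_q$, but $p$ and $q$ need not coincide.
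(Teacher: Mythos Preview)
Your proof is correct and is precisely the argument the paper intends: the paper's one-line proof (``This follows from the fact that the orbits of $G$ are blocks, $V_p$'') is exactly your observation that $u_{ij}\neq 0$ forces $i$ and $j$ to lie in the same block, applied to each factor of $u_{ij}u_{kl}\neq 0$. You have simply unpacked that sentence.
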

\begin{proof}
This follows from the fact that the orbits of $G$ are  blocks, $V_p$.
\end{proof}

We will use the following argument to show that a quantum permutation group $G$ does not have the Frucht property:

\bigskip

(1) We assume that an arbitrary embedding $G\subset S_N^+$ acts on a finite graph $X$.

\bigskip

(2) With $G\curvearrowright X$, the permutation $P$ from Proposition \ref{PROPBLO} can be taken to be a relabelling of $V$ so that $u$ is block diagonal with transitive magic representations:
$$u=\begin{pmatrix}
      u^1 &  &  \\
       & \ddots &  \\
       &  & u^k
    \end{pmatrix}.$$
    Furthermore, this relabelling of $V$ can be chosen so that, up to multiplicity, at most one representative from each equivalence class of transitive magic representations appears. We can assume furthermore that transitive magic representations of the same type occurring with multiplicity appear consecutively along the diagonal, i.e.
    $$\operatorname{diag}(u^1,u^1,u^2,u^3,u^{3})\text{ rather than e.g. }\operatorname{diag}(u^3,u^2,u^1,u^1,u^{3})$$

    \bigskip

The $m$-th appearance of $u^p$ along the diagonal is associated with a block of vertices $V_p^{(m)}\subset \{1,2,\dots,N\}=V$, and we have a partition
$$V=\{1,2,\dots,N\}=\bigsqcup_{p,m}V_p^{(m)}.$$

\bigskip

To illustrate the notation, suppose that $G\subset S_7^+$ acts on a graph. Suppose  $u^1\in M_{2}(C(G))$, $u^2\in M_{3}(C(G))$, and $u=\operatorname{diag}(u^1,u^1,u^2)$. This partitions $$V=\{1,2\}\sqcup\{3,4\}\sqcup\{5,6,7\},$$ and we denote $V_1^{(1)}=\{1,2\}$, $V_1^{(2)}=\{3,4\}$, and $V_2^{(1)}=\{5,6,7\}$.

\bigskip

(3)  We find, for each $(p,q)\in \{1,\dots,k\}^2$, the $V_p\times V_q$ orbitals of $G\subset S_N^+$.

\bigskip

(4) We  find  a  large (classical) group of  $H$ of permutations of $V$ that leaves both the blocks, and moreover, the orbitals invariant. Using Theorem \ref{orbitalgraph3}, this is an implication:

$$G\curvearrowright X\implies H \curvearrowright X.$$

\bigskip

(5) If $H \not\subset G_{\text{class}}$, by Theorem \ref{orbitalgraph3},
$$G^+(X)\neq G,$$ and as $X$ was chosen arbitrarily from all graphs such that $G\curvearrowright X$, we conclude that $G$ does not have the Frucht property.

\bigskip

With the notation explained, two helpful propositions:
\begin{proposition}\label{ProDia}
Where $G\subset S_N^+$ with blocks of type $V_p$ of size $N_p$, and $O$  the set of orbitals disjoint of the diagonal relation,  the orbitals of  $V_{p}^{(m)}\times V_{p}^{(m+n)}$ are of the form:
  \begin{align*}
    o' &:=\{(i,j+nN_p)\colon  o\in O,\,(i,j)\in o\}  \\
    o_d &=\{(i,i+nN_p)\colon i\in V_{p}^{(m)}\}
  \end{align*}
\end{proposition}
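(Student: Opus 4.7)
The plan is to reduce the classification of orbitals inside $V_p^{(m)} \times V_p^{(m+n)}$ to the already-known classification of orbitals inside the single block $V_p^{(m)} \times V_p^{(m)}$, by exploiting the translation symmetry between consecutive identical copies of $u^p$ stacked along the diagonal of $u$.

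First I would record two consequences of the block structure. Since $u$ is block diagonal on the partition $\{V_q^{(l)}\}$, the entry $u_{ab}$ vanishes whenever $a$ and $b$ lie in distinct blocks; hence for $(a,c) \in V_p^{(m)} \times V_p^{(m+n)}$ the orbital condition $u_{ab} u_{cd} \neq 0$ automatically forces $(b,d) \in V_p^{(m)} \times V_p^{(m+n)}$. Moreover, because consecutive copies of $u^p$ are literal repetitions of the same magic matrix and each such block has common size $N_p$, the starting offsets of these blocks satisfy $s_{p, m+n} = s_{p, m} + nN_p$, yielding the key translation identity
$$u_{c, d} = u_{c - nN_p, d - nN_p} \quad \text{for all } c, d \in V_p^{(m+n)}.$$

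This identity makes the second-coordinate shift $\tau(a, c) = (a, c - nN_p)$, viewed as a map from $V_p^{(m)} \times V_p^{(m+n)}$ onto $V_p^{(m)} \times V_p^{(m)}$, a bijection that preserves the orbital equivalence relation, and it therefore induces a bijection between the orbitals of $G$ contained in $V_p^{(m)} \times V_p^{(m+n)}$ and the orbitals of $G$ contained in $V_p^{(m)} \times V_p^{(m)}$. By the dichotomy recorded earlier these latter orbitals are either disjoint from the diagonal of $\{1, \ldots, N\}$, in which case they are precisely the elements of $O$ contained in $V_p^{(m)} \times V_p^{(m)}$, or contained in the diagonal. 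In the second case, transitivity of $u^p$ gives $u_{ij} \neq 0$ for all $i, j \in V_p^{(m)}$, and the projection identity $u_{ij}^2 = u_{ij}$ yields $(i, i) \sim (j, j)$ for all such $i, j$, so the whole diagonal of $V_p^{(m)}$ forms a single orbital. Pulling these two types back through $\tau^{-1}$ produces exactly the sets $o'$ and $o_d$ described in the statement.

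The only genuinely delicate point is verifying the offset identity $s_{p, m+n} = s_{p, m} + nN_p$, which depends crucially on the convention, stipulated just above the proposition, that repeated copies of a given transitive magic representation are stacked consecutively along the diagonal; once this bookkeeping is in place, the remainder is a straightforward application of translation-invariance combined with the already-established orbital dichotomy.
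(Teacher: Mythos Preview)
Your argument is correct and follows essentially the same route as the paper: both hinge on the translation identity $u_{c,d}=u_{c-nN_p,\,d-nN_p}$ (stated in the paper as $u_{ij}u_{kl}\neq 0\Rightarrow u_{ij}u_{k+nN_p,\,l+nN_p}\neq 0$), which transports the orbital structure from $V_p^{(m)}\times V_p^{(m)}$ to $V_p^{(m)}\times V_p^{(m+n)}$. Your write-up is in fact more complete than the paper's sketch, since you explicitly verify via transitivity and idempotence that the diagonal of $V_p^{(m)}$ is a single orbital rather than merely asserting it.
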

\begin{proof}
Implicit here is that the labelling of $V$ has been chosen with all the $V_p$ blocks together:
$$u=\operatorname{diag}(\dots,u^p,\dots,u^p,\dots).$$
This means that
$$u_{ij}u_{kl}\neq 0\implies u_{ij}u_{k+nN_p,l+nN_p}\neq 0.$$
Finally the diagonal orbital in $V_p^{(m)}\times V_p^{(m)}$, ordinarily not in play with finite graphs assumed without loops, comes into play via:
$$u_{ij}u_{i+nN_p,j+nN_p}\neq 0.$$
\end{proof}
\begin{proposition}\label{PROPCheck1}
Suppose $G\curvearrowright X$. If $\sigma\in S_V$ leaves all but one orbital (disjoint of the diagonal relation) invariant, it is  invariant for all of them.
\end{proposition}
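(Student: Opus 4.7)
The plan is to exploit the fact that $\sigma$ induces a bijection $\widetilde\sigma$ of $V \times V$ via $\widetilde\sigma(i,j) = (\sigma(i),\sigma(j))$, and then use a soft complementation argument: a bijection of a finite set that preserves all but one block of a partition must also preserve the remaining block. Concretely, let $o^* \in O$ be the distinguished orbital whose invariance is not assumed, and set $A := \bigsqcup_{o \in O,\, o \neq o^*} o$.

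First I would observe that since $\sigma \in S_V$ is injective, $\widetilde\sigma$ sends the set $\Delta^c := \{(i,j) \in V \times V : i \neq j\}$ of off-diagonal pairs bijectively onto itself. By the structural theorem on orbitals cited above, $\Delta^c$ decomposes as the disjoint union of the elements of $O$, and in particular $\Delta^c = A \sqcup o^*$.

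Next, for each $o \in O$ with $o \neq o^*$, the hypothesis $\widetilde\sigma(o) \subseteq o$ together with $|\widetilde\sigma(o)| = |o| < \infty$ upgrades to $\widetilde\sigma(o) = o$. Taking the disjoint union over $o \in O \setminus \{o^*\}$ gives $\widetilde\sigma(A) = A$. Since $\widetilde\sigma$ is a bijection of $A \sqcup o^*$ that fixes $A$ setwise, it must fix the complement $o^*$ setwise as well, and this is exactly the assertion that $\sigma$ leaves $o^*$ invariant.

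There is no real obstacle here; the argument is purely combinatorial. The only two points that need even a moment's care are the observation that $\widetilde\sigma$ preserves $\Delta^c$ (using only the injectivity of $\sigma$) and the finiteness of orbitals, which is what allows the inclusion $\widetilde\sigma(o) \subseteq o$ to be promoted to equality.
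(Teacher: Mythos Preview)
Your argument is correct and rests on the same complementation idea as the paper's proof: a bijection of a finite set that fixes all but one block of a partition must fix the remaining block. The execution differs slightly. The paper localises to the block $V_p\times V_q$ that contains the exceptional orbital and then splits into the cases $V_p\neq V_q$ (no diagonal present) and $V_p=V_q$ (diagonal present but automatically $\widetilde\sigma$-invariant), arguing complementation inside that block. You instead work globally on all of $\Delta^c=(V\times V)\setminus\Delta$, observing once that $\widetilde\sigma$ preserves $\Delta^c$ and then taking the complement of $A=\bigsqcup_{o\neq o^*}o$ in $\Delta^c$. Your route is a touch more streamlined, since it avoids invoking the block decomposition and the case split altogether; the paper's phrasing, by contrast, keeps the orbital-within-$V_p\times V_q$ picture front and centre, which matches how the proposition is actually applied later (e.g.\ in the Kac--Paljutkin proof).
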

\begin{proof}
If $V_p\neq V_q$ there is no diagonal relation. Orbitals form a partition of $V_p\times V_q$ and permutations $\sigma\in S_{V}$ induce a permutation $\sigma'$ on $V_p\times V_q$:
$$(i,j)\mapsto (\sigma(i),\sigma(j)).$$
If there are $n$ orbitals $o_1,\dots,o_n$, and $o_1,\dots,o_{n-1}$ are invariant under $\sigma'$, then it follows as $\sigma'$ is a permutation that $\sigma'(o_n)=o_n$.

\bigskip

If $V_p=V_q$, there is a diagonal orbital, $o_d$, but the induced permutation $\sigma'$ leaves the diagonal relation invariant.
\end{proof}
\section{Finite duals}\label{SECDuals}
If a group dual $\widehat{\Gamma}\subset S_N^+$ acts on a finite graph $X$, with appropriate vertex labelling, Bichon's representation results from \cite{bi3} implies that  $u$ is a block diagonal matrix of Fourier-type transitive magic representations $u^g$.  Where each $g_p\in\Gamma$ occurs in $u$ with multiplicity $m_p\geq 0$, and $N_p:=\operatorname{ord}(g_p)$, the group dual action $\widehat{\Gamma}\curvearrowright X$ partitions
 $$V:=\bigsqcup_{g_p\in\Gamma}\left(\bigsqcup_{m=0}^{m_{p}}V_{p}^{(m)}\right)=\bigsqcup_{g_p\in \Gamma}\left(\bigsqcup_{m=0}^{m_p}\{i^{(m)}_{p},i^{(m)}_{p}+1,\dots,i^{(m)}_p+N_p-1\}\right).$$
As explained above, $V_p^{(m)}=\{i^{(m)}_{p},i^{(m)}_{p}+1,\dots,i^{(m)}_p+N_p-1\}$ corresponds to the vertices given by the $m$-th copy of $u^{g_p}$ in the block diagonal magic unitary $u$.

\bigskip

\begin{definition}
If $V_p,V_q\subset V$, and $V_p\times V_q\subset E$ or $V_p\times V_q\subset E^c$, say that the edge relation between $V_p$ and $V_q$  is total. If $V_p=V_q$, say that the edge relation is total if for all $i\neq j$, $(i,j)\in E$ or $(i,j)\in E^c$. 

\bigskip

 Furthermore say that a finite  group dual $\widehat{\Gamma}$ is total if for all $g_p,g_q\in \Gamma$, $1\leq i,j\leq N_p$, and $1\leq k,l\leq N_q$
$$u^{g_p}_{ij}u^{g_q}_{kl}=0\implies u^{g_p},u^{g_q} \text{ are equivalent transitive magic representations}.$$
\end{definition}

\begin{proposition}\label{totalprop}
If total $\widehat{\Gamma} \curvearrowright X$, then the edge relation between inequivalent blocks is total.
\end{proposition}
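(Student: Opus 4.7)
The plan is to combine Bichon's observation (Proposition \ref{Bichobs}) with the totality hypothesis, which simply supplies the nonvanishing needed to feed into it. Let $V_p$ and $V_q$ be two inequivalent blocks of the block-diagonal magic unitary $u$ for $\widehat{\Gamma}\subset S_N^+$, corresponding to inequivalent transitive magic representations $u^{g_p}$ and $u^{g_q}$; in particular $V_p\neq V_q$, so ``total'' means $V_p\times V_q\subset E$ or $V_p\times V_q\subset E^c$.

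The first step is to read off the contrapositive of the totality hypothesis: for every choice of local Fourier indices $1\leq i,j\leq N_p$ and $1\leq k,l\leq N_q$,
$$u^{g_p}_{ij}\, u^{g_q}_{kl} \neq 0.$$
Translating back to global vertex labels via the partition $V=\bigsqcup V_p^{(m)}$, this says that $u_{ij}u_{kl}\neq 0$ in $C(\widehat{\Gamma})$ for every $(i,j)\in V_p\times V_p$ and every $(k,l)\in V_q\times V_q$.

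The second step is to apply Proposition \ref{Bichobs}. Since $\widehat{\Gamma}\curvearrowright X$, whenever $u_{ij}u_{kl}\neq 0$ we must have $(i,k),(j,l)\in E$ or $(i,k),(j,l)\in E^c$. Fix a baseline pair $(i_0,k_0)\in V_p\times V_q$ and let $\epsilon\in\{E,E^c\}$ be its edge class. For any other $(j,l)\in V_p\times V_q$, the pair $(i_0,j)\in V_p\times V_p$ together with $(k_0,l)\in V_q\times V_q$ satisfies $u_{i_0 j}u_{k_0 l}\neq 0$ by the previous paragraph, so Bichon forces $(j,l)\in\epsilon$. Hence $V_p\times V_q\subset\epsilon$, which is exactly the definition of the edge relation between $V_p$ and $V_q$ being total.

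I do not foresee any real obstacle: the argument is essentially a direct unpacking of the two hypotheses. The only mild care required is the bookkeeping between local indices of each Fourier block $u^{g_p}$ and the global labels of $V_p^{(m)}$, but the explicit block-diagonal structure set up prior to the proposition makes this translation immediate.
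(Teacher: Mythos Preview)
Your proof is correct and follows essentially the same approach as the paper. The paper observes that totality gives $u^{g_p}_{ij}u^{g_q}_{kl}\neq 0$ for all indices, concludes that $V_p\times V_q$ is a single orbital, and then invokes Theorem~\ref{orbitalgraph2}; you bypass the orbital language and apply Proposition~\ref{Bichobs} directly with a fixed baseline pair, which is exactly the content underlying that theorem.
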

\begin{proof}
Let $g_p$ of order $N_p$ and $g_q$ of order $N_q$ give inequivalent transitive magic representations.  Let $i,j\in V_{p}$ and $k,l\in V_{q}$:
$$u_{ij}^{g_p}u_{kl}^{g_q}\neq 0\implies (i,k)\sim (j,l)\implies V_{p}\times V_{q}\text{ is an orbital}.$$ This implies via Theorem \ref{orbitalgraph2} that the relations between inequivalent blocks are total.
\end{proof}
This theorem breaks the convention that $k$ denotes the number of blocks in $G\subset S_N^+$. By Proposition \ref{PROPBLO}, we can assume that at most one representative from each equivalence class of transitive magic representations of $C(\widehat{\Gamma})$ appears in $\widehat{\Gamma}\subset S_N^+$, and $k$ here represents the number of equivalence classes appearing:
\begin{theorem}\label{total}
Suppose that total $\widehat{\Gamma}\curvearrowright X$. Let $\Gamma_u=\{g_1,\dots,g_k\}$  be the set of  elements of $\Gamma$ appearing as Fourier-type transitive magic representations in the fundamental representation.  Then, where $g_1,\dots,g_k$ are of order $N_1,\dots,N_k$,
$$\mathbb{Z}_{N_1}\times\cdots \mathbb{Z}_{N_k}\curvearrowright X.$$
If $N_p=3$ and $m_{p}=1$, the copy of $\mathbb{Z}_{N_p}$ can be replaced by $S_3$.
\end{theorem}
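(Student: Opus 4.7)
The plan is to extract the orbital structure of $\widehat{\Gamma}\curvearrowright X$ directly from the Fourier formula (\ref{fouriermagi}), construct classical permutations of $V$ that leave every orbital invariant, and then invoke Theorem \ref{orbitalgraph3} to embed the direct product into $G(X)$. Using (\ref{fouriermagi}) and the orthogonality relation $\sum_m w^{(i-j-k+l)m} = N_p\,\delta_{i-j \equiv k-l\,(\mathrm{mod}\,N_p)}$, a direct expansion gives
$$u^{g_p}_{ij}\,u^{g_p}_{kl} \neq 0 \iff i - j \equiv k - l \pmod{N_p}.$$
Consequently, within any single copy $V_p^{(m)}$, and between any two copies $V_p^{(m)},V_p^{(m')}$ of blocks of the same equivalence class $g_p$, the orbitals are the ``constant-difference'' orbitals of the regular action of $\mathbb{Z}_{N_p}$ on itself. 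For inequivalent $g_p,g_q$, the totality hypothesis combined with Proposition \ref{totalprop} forces each product set $V_p \times V_q$ (across all copies) to be a single orbital.

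For each equivalence class $p \in \{1,\ldots,k\}$, define $\tau_p \in S_V$ to act by cyclic shift of length one on every copy $V_p^{(m)}$ of $V_p$ simultaneously, and trivially on all blocks of other classes. I would then verify that $\tau_p$ leaves each orbital invariant: translating by one preserves the difference $i-j \bmod N_p$ on both intra-copy and cross-copy orbitals within type $g_p$; the total orbital between $V_p$-type and $V_q$-type blocks ($q \neq p$) is fixed by any permutation of $V_p$; and orbitals supported away from $V_p$ are untouched. Since the $\tau_p$ have pairwise disjoint supports and order $N_p$, the subgroup they generate is $\mathbb{Z}_{N_1}\times\cdots\times\mathbb{Z}_{N_k}$, and Theorem \ref{orbitalgraph3} places it inside $G(X)$, yielding $\mathbb{Z}_{N_1}\times\cdots\times\mathbb{Z}_{N_k}\curvearrowright X$.

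For the refinement when $N_p = 3$ and $m_p = 1$: with a unique copy of $V_p$, the only non-diagonal orbitals in $V_p \times V_p$ are $o = \{(i, i+1)\}$ and $o^{-1}$. Proposition \ref{orbitalgraph} then forces $o \cup o^{-1}$ to lie entirely in $E$ or entirely in $E^c$, so the induced subgraph on $V_p$ is either $K_3$ or totally disconnected, and every permutation of $V_p$ preserves it. Because $m_p = 1$, every other block is inequivalent to $V_p$, so the edge relation to each of them is total and also preserved by any permutation of $V_p$ extended by identity. Hence every transposition of $V_p$ lies in $G(X)$, and together with $\tau_p$ generates $S_3 \subset G(X)$, replacing the $\mathbb{Z}_3$ factor. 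The main obstacle is the careful bookkeeping across equivalent blocks: evaluating $u^{g_p}_{ij}u^{g_p}_{kl}$ in terms of within-block indices and tracking the cross-copy orbitals between distinct copies $V_p^{(m)}$. The restriction $m_p = 1$ in the $S_3$ statement is precisely what is needed, since for $m_p \geq 2$ a transposition on a single copy of $V_p$ mixes the three cross-copy orbitals $\{(i,j)\colon i'-j' \equiv d\}$ for $d = 0,1,2$, and these are not forced by Proposition \ref{orbitalgraph} into the same edge class.
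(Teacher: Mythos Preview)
Your proposal is correct and follows essentially the same route as the paper: compute the orbitals block by block (circulant/constant-difference within and between equivalent blocks, full products between inequivalent blocks by totality via Proposition \ref{totalprop}), exhibit simultaneous cyclic shifts on all copies of each $V_p$ as orbital-preserving permutations, and invoke Theorem \ref{orbitalgraph3}. The only cosmetic differences are that you obtain the intra-block orbitals by a direct Fourier orthogonality computation whereas the paper uses the circulant identity $u^{g_p}_{ij}=u^{g_p}_{i+l,j+l}$, and you work with generators $\tau_p$ rather than the full family $\sigma_{(l_1,\ldots,l_k)}$; your closing remark explaining why $m_p=1$ is genuinely needed for the $S_3$ upgrade is a nice addition not spelled out in the paper.
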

\begin{proof}
Let $g_p\in\Gamma_u$. Due to the circulant nature of $u^{g_p}$, with the (lower) index $N_p$ identified with $0$ in the (lower) indices of $u^{g_p}$, for any $i,j\in \{1,\dots,N_p\}$ and $l\in\{0,\dots,N_p-1\}$:
$$u^{g_p}_{ij}=u^{g_p}_{i+l,j+l}\implies u^{g_p}_{ij}u^{g_p}_{i+l,j+l}\neq 0\implies (i,i+l)\sim (j,j+l).$$
This gives a $V_{p}^{(a)}\times V_{p}^{(a)}$ orbital of the form
$$o_l=\{(i,j)\,\colon\, j-i\mod N_p=l\},$$
with $o_0$ giving the diagonal relation.

\bigskip

Let us consider now $V_{p}^{(a)}\times V_{p}^{(b)}$ orbitals.  Proposition \ref{ProDia} gives, for $l\in\{0,\dots,N_p-1\}$, $V_{p}^{(a)}\times V_{p}^{(b)}$ orbitals of the form:
\begin{equation}\label{edgerel1}
o_l^{(a,b)}=\{(i^{(a)}+j_1,i^{(b)}+j_2)\,\colon\, j_2-j_1\mod N_p=l\}.
\end{equation}

\bigskip

By Proposition \ref{totalprop}, for blocks of $V_{p}$ and $V_{q}$ of different type, the unique $V_{p}\times V_{q}$ orbital is $V_{p}\times V_{q}$ itself.

\bigskip

For each $g_p\in \Gamma_u$, fix $l_p\in \{0,\dots,N_p-1\}$. Consider the permutation $\sigma_{(l_1,l_2,\dots,l_{k})}$ that maps
$$i^{(m)}_{p}+j\rightarrow i^{(m)}_{p}+(j+l_p\mod N_p).$$
Note that
\begin{equation}\label{restrict}\sigma_{(l_1,l_2,\dots,l_{k})}(V_{p}^{(m)})=V_{p}^{(m)}.\end{equation}

The edge relation between a $V_{p}^{(m)}$ and an inequivalent block is total, so any permutation of $V_{p}$ will leave the relations there invariant. It remains to show that the permutation above leaves invariant the orbitals between a pair of possibly equal equivalent blocks $V^{(a)}_{g_p},\,V^{(b)}_{g_p}$.

\bigskip

Let
$$(i^{(a)}_{p}+j_1,i^{(b)}_{p}+j_2)\in o^{(a,b)}_l\implies j_2-j_1\mod N_p=l.$$
Then $(\sigma_{(l_1,l_2,\dots,l_{k})}(i^{(a)}_{p}+j_1),\sigma_{(l_1,l_2,\dots,l_{k})}(i^{(b)}_{p}+j_2))$ is equal to
$$(i^{(a)}_{p}+(j_1+l_p\mod N_p),i^{(b)}_{p}+(j_2+l_p\mod N_p)).$$
If $j_2-j_1\mod N_p=l$ then
$$((j_2+l_p\mod N_p)-(j_1+l_p\mod N_p))\mod N_p=l$$
$$\implies (\sigma_{(l_1,l_2,\dots,l_{k})}(i^{(a)}_{p}+j_1),\sigma_{(l_1,l_2,\dots,l_{k})}(i^{(b)}_{p}+j_2))\in o_l^{(a,b)}$$
and so by Theorem \ref{orbitalgraph3}  the abelian group
$$\langle\sigma_{(l_1,l_2,\dots,l_k)} \colon l_p\in \{0,\dots, N_p-1\}\rangle\cong \mathbb{Z}_{N_1}\times\cdots \mathbb{Z}_{N_k}$$
 is a subgroup of $G(X)$.

\bigskip

If $N_p=3$ and $m_{p}=1$, then the induced subgraph of $V_{p}=\{i^{(1)}_{p},i_{p}^{(1)}+1,i_{p}^{(1)}+2\}$ is complete or edgeless, and it follows that the copy of $\mathbb{Z}_{N_p}$ can be taken to be $S_3$.
\end{proof}
As an illustration of Theorem \ref{total}, consider the total finite dual $\widehat{S_3}$ with $\widehat{S_3}\subset S_5^+$ given by $u=\operatorname{diag}(u^{(12)},u^{(123)})$. There are five orbitals disjoint of the diagonal relation:
\begin{align*}
  o_1 & =\{(1,2),(2,1)\} \\
  o_2 & =\{1,2\}\times\{3,4,5\}
  \\ o_2^{-1}&=\{3,4,5\}\times\{1,2\}
  \\o_3&=\{(3,4),(4,5),(5,3)\}
  \\ o_3^{-1}&=\{(3,5),(5,4),(4,3)\}
\end{align*} Theorem \ref{orbitalgraph2} thus gives eight graphs on five vertices  that $\widehat{S_3}$ acts on, and each admits a $\mathbb{Z}_2\times S_3$ action:

\bigskip

\adjustbox{scale=0.6,center}{%
\begin{tikzcd}
        &         & {} \arrow[dddd,no head] &                   &         & {} \arrow[dddd,no head] &         &                                         & {} \arrow[dddd,no head] &                   &                                         \\
        & \bullet &                 &                   & \bullet &                 &         & \bullet \arrow[d,no head] \arrow[dd, bend left,no head] &                 &                   & \bullet \arrow[d,no head] \arrow[dd, bend left,no head] \\
\bullet & \bullet &                 & \bullet \arrow[d,no head] & \bullet &                 & \bullet & \bullet \arrow[d,no head]                       &                 & \bullet \arrow[d,no head] & \bullet \arrow[d,no head]                       \\
\bullet & \bullet &                 & \bullet           & \bullet &                 & \bullet & \bullet                                 &                 & \bullet           & \bullet                                 \\
        &         & {}              &                   &         & {}              &         &                                         & {}              &                   &
\end{tikzcd}
}

\medskip

\rule{\textwidth}{0.4pt}

\medskip

\adjustbox{scale=0.6,center}{%
\begin{tikzcd}
                                         &         & {} \arrow[dddd, no head] &                                                            &         & {} \arrow[dddd, no head] &                                          &                                                           & {} \arrow[dddd, no head] &                                                            &                                                           \\
                                         & \bullet &                          &                                                            & \bullet &                          &                                          & \bullet \arrow[d, no head] \arrow[dd, no head, bend left] &                          &                                                            & \bullet \arrow[d, no head] \arrow[dd, no head, bend left] \\
\bullet \arrow[ru, no head] \arrow[r, no head] \arrow[rd, no head]  & \bullet &                          & \bullet \arrow[d, no head] \arrow[ru, no head] \arrow[r, no head] \arrow[rd, no head] & \bullet &                          & \bullet \arrow[ru, no head] \arrow[r, no head] \arrow[rd, no head]  & \bullet \arrow[d, no head]                                &                          & \bullet \arrow[d, no head] \arrow[ru, no head] \arrow[r, no head] \arrow[rd, no head] & \bullet \arrow[d, no head]                                \\
\bullet \arrow[r, no head] \arrow[ru, no head] \arrow[ruu, no head] & \bullet &                          & \bullet \arrow[r, no head] \arrow[ru, no head] \arrow[ruu, no head]                   & \bullet &                          & \bullet \arrow[ruu, no head] \arrow[ru, no head] \arrow[r, no head] & \bullet                                                   &                          & \bullet \arrow[r, no head] \arrow[ru, no head] \arrow[ruu, no head]                   & \bullet                                                   \\
                                         &         & {}                       &                                                            &         & {}                       &                                          &                                                           & {}                       &                                                            &
\end{tikzcd}
}

\bigskip

\begin{proposition}
Let $p$ be a prime. If $\Gamma$ is non-abelian, has $p$ one dimensional representations, and $\widehat{\Gamma}$ is total, then $\widehat{\Gamma}$ does not have the Frucht property.
\end{proposition}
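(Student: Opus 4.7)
The plan is to argue by contradiction, using the totality assumption together with Theorem~\ref{total} to produce a large classical group acting on any graph on which $\widehat{\Gamma}$ acts, and then checking that this group cannot embed into the very restricted classical version $\widehat{\Gamma}_{\text{class}}$.

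Suppose for contradiction that there is a finite graph $X$ with $G^+(X)=\widehat{\Gamma}$, so fix the embedding $\widehat{\Gamma}\subset S_N^+$ with $\widehat{\Gamma}\curvearrowright X$. By Bichon's classification, the fundamental magic unitary is block diagonal with Fourier-type blocks $u^{g_r}$ indexed by a set $\Gamma_u=\{g_1,\dots,g_k\}\subseteq\Gamma$ (possibly with multiplicities). The Fourier inversion formula (\ref{fouriermagi}) expresses each $g_r$ as a linear combination of the $u_{ij}$, and since the $u_{ij}$ generate $C(\widehat{\Gamma})=C^*(\Gamma)$, the set $\Gamma_u$ must generate $\Gamma$. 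Because $\Gamma$ is non-abelian it is not cyclic, so at least two elements of $\Gamma_u$ are non-identity; call these two orders $N_{r_1},N_{r_2}\geq 2$.

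Apply Theorem~\ref{total}: totality of $\widehat{\Gamma}$ gives the classical action
$$H:=\mathbb Z_{N_1}\times\cdots\times\mathbb Z_{N_k}\curvearrowright X.$$
Since the product defining $|H|$ contains at least two factors $\geq 2$, we have $|H|\geq 4$ and $|H|$ is composite. On the other hand, by the proposition preceding Theorem~\ref{PROPLIB}, $\widehat{\Gamma}_{\text{class}}$ is abelian of order $p$; as $p$ is prime this forces $\widehat{\Gamma}_{\text{class}}\cong\mathbb Z_p$, whose only subgroups have order $1$ or $p$. Neither possibility matches $|H|$, which is composite and at least $4$, so $H\not\subset\widehat{\Gamma}_{\text{class}}$.

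Theorem~\ref{orbitalgraph3}, applied with $G=\widehat{\Gamma}$, now yields $\widehat{\Gamma}\subsetneq G^+(X)$, contradicting $G^+(X)=\widehat{\Gamma}$. Hence no such $X$ exists and $\widehat{\Gamma}$ fails the Frucht property. The only place requiring care is the arithmetic synthesis: pinning $\widehat{\Gamma}_{\text{class}}$ down to $\mathbb Z_p$ via primality while simultaneously using non-abelianness of $\Gamma$ to guarantee that $H$ is genuinely composite; once this is in place Theorems~\ref{total} and \ref{orbitalgraph3} do the work.
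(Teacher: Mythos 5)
Your proof is correct and essentially matches the paper's: non-cyclicity of $\Gamma$ forces at least two nontrivial factors in the abelian group supplied by Theorem~\ref{total}, making its order composite and hence incompatible with $\widehat{\Gamma}_{\text{class}}\cong\mathbb Z_p$. You spell out one step the paper leaves implicit, namely that $\Gamma_u$ must generate $\Gamma$ because the entries of the magic unitary generate $C(\widehat{\Gamma})$.
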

\begin{proof}
If $\Gamma$ is non-abelian it is not cyclic, and so any generating set has more than two elements.  If $\widehat{\Gamma}$ is total, then by Theorem \ref{total}, a non-trivial abelian group with a composite number of elements is a subgroup of $G(X)$. However if $G^+(X)=\widehat{\Gamma}$, then $G(X)=\widehat{\Gamma}_{\text{class}}\cong \mathbb{Z}_p$ which has no non-trivial subgroups.
\end{proof}
\begin{theorem}
The finite duals $\widehat{S_3}$, $\widehat{A_4}$, and $\widehat{A_5}$ do not have the Frucht property.
\end{theorem}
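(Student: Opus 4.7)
The plan is to verify totality of $\widehat{S_3}$, $\widehat{A_4}$, and $\widehat{A_5}$, then apply the preceding proposition to the first two and adapt its argument for the third.

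For totality, expand via the Fourier formula (\ref{fouriermagi}):
\[
u^{g_p}_{ij} u^{g_q}_{kl} = \frac{1}{N_p N_q} \sum_{a, b} w_p^{(i-j)a} w_q^{(k-l)b}\, g_p^a g_q^b
\]
in the group algebra $\mathbb{C}[\Gamma]$. The key observation is that this sum is nonzero whenever the $N_p N_q$ products $g_p^a g_q^b$ (for $0 \leq a < N_p$, $0 \leq b < N_q$) are pairwise distinct, which is equivalent to $\langle g_p \rangle \cap \langle g_q \rangle = \{e\}$. If instead $g_p$ and $g_q$ generate the same cyclic subgroup, then writing $g_q = g_p^k$ with $\gcd(k, N_p) = 1$ and relabelling indices by $a \mapsto k^{-1}a \bmod N_p$ exhibits $u^{g_p}$ and $u^{g_q}$ as equivalent magic representations, so the totality implication is automatic. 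Since every non-trivial element of $S_3$, $A_4$, and $A_5$ has prime order ($2$ or $3$ for the first two; $2$, $3$, or $5$ for $A_5$), two distinct cyclic subgroups necessarily intersect trivially, and hence all three duals are total.

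Now $S_3$ has exactly $2$ and $A_4$ has exactly $3$ one-dimensional representations, both primes, so the preceding proposition directly yields the failure of the Frucht property for $\widehat{S_3}$ and $\widehat{A_4}$. For $\widehat{A_5}$, perfection of $A_5$ means the trivial representation is the only one-dimensional one and $\widehat{A_5}_{\text{class}} = \{e\}$; here $p = 1$ is not prime, but the argument from the proposition simplifies. Since $A_5$ is non-cyclic, any generating set has at least two non-trivial elements, so by Theorem \ref{total} any graph $X$ with $\widehat{A_5} \curvearrowright X$ satisfies $\mathbb{Z}_{N_1} \times \cdots \times \mathbb{Z}_{N_k} \subset G(X)$ with $N_1 N_2 \cdots N_k \geq 4$; but $G^+(X) = \widehat{A_5}$ would force $G(X) = \{e\}$, a contradiction.

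The main obstacle is the totality step, specifically reducing non-vanishing of $u^{g_p}_{ij} u^{g_q}_{kl}$ to the group-theoretic condition $\langle g_p \rangle \cap \langle g_q \rangle = \{e\}$; once this is in place, enumeration of cyclic subgroups in the three small groups and appeal to the proposition (or its simplified variant for $A_5$) finishes the argument.
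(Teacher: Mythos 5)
Your proof is correct but verifies totality by a genuinely different route. The paper's sufficient condition for totality is that, for inequivalent $u^{g_p},u^{g_q}$, the matrix $K(g_p,g_q)=[g_p^m g_q^n]$ has at least one \emph{unique} entry $g^*$, and it checks this case-by-case (pairs of three-cycles, of disjoint double transpositions, mixed pairs, etc.). You instead establish the stronger statement that \emph{all} $N_p N_q$ entries of $K(g_p,g_q)$ are distinct, deriving this uniformly from the observation that distinctness is equivalent to $\langle g_p\rangle\cap\langle g_q\rangle=\{e\}$ (essentially $|HK|=|H||K|/|H\cap K|$), and that this holds automatically for inequivalent representations once every non-identity element has prime order. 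You also supply the missing piece that $\langle g_p\rangle=\langle g_q\rangle$ forces equivalence via the relabelling $a\mapsto k^{-1}a\bmod N_p$, which generalizes the paper's remark to arbitrary prime order rather than just $3$ and $5$. What your approach buys is a structural, one-line reason the argument works for exactly these three groups — all elements have prime order — instead of a case enumeration; what it gives up is that the paper's unique-entry criterion is weaker and could in principle apply to groups where some element has composite order and two cyclic subgroups share a proper nontrivial intersection. Your handling of $\widehat{A_5}$ is also right: since $p=1$ is not prime, the preceding proposition does not literally apply, but the same contradiction (a nontrivial abelian subgroup of $G(X)$ versus $G(X)=\widehat{A_5}_{\text{class}}=\{e\}$) goes through, and you make this explicit where the paper leaves it implicit.
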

\begin{proof}
We observed in Proposition \ref{PROPLIB} that $S_3$, $A_4$, $A_5$ have, respectively, two, three, and one, one-dimensional representations.

\bigskip

We claim that each of these finite duals is total. A sufficient condition for totality of $\widehat{\Gamma}$ is that for inequivalent transitive magic representations $u^{g_p},\,u^{g_q}$   the $N_p\cdot N_q$ matrix with entries in $C(\widehat{\Gamma})$:
\begin{equation}\label{g1g2matrix}
K(g_p,g_q):=\left[g_p^mg_q^n\right]_{m=1,\dots,N_p,\,n=1,\dots,N_q},
\end{equation}
contains a unique entry, say $g^*$. This implies that
$$u_{ij}^{g_p}u_{kl}^{g_q},$$
a formal sum in $\Gamma$, contains a single non-zero multiple of $g^*$, and so cannot be zero. Note that by writing down the Fourier-type transitive magic representation $u^{g_p}$, we can see that if $g_p,\,g_q\in \Gamma$ are of both of order three or both of order five, and  $\langle g_p\rangle=\langle g_q\rangle$, then the Fourier-type transitive magic representations $u^{g_p}$ and $u^{g_q}$ are equivalent.

\bigskip

In $\widehat{S_3}$ the only pair of equivalent transitive magic representations are those given by $(123)$ and $(132)$. It follows quickly that $\widehat{S_3}$ is total.

\bigskip

In $A_4$ there are eight three-cycles, three disjoint two-cycles, and the identity, giving eight inequivalent representations. If $\tau_1,\, \tau_2$ are inequivalent disjoint two-cycles, then both are unique entries in $K(\tau_1,\tau_2)$. Given inequivalent three-cycles $\sigma_1,\,\sigma_2$, as $\sigma_1^{-1}=\sigma_1^2$ and $(\sigma_1^2)^{-1}=\sigma_1$, the identity is a unique entry in $K(\sigma_1,\sigma_2)$ with a similar situation for $K(\tau_1,\sigma_1)$.

\bigskip

In $A_5$ there are 24 five-cycles (six equivalence classes $f_1,\dots f_6$), 20 three cycles (10 equivalence classes $\sigma_1,\dots,\sigma_{10}$), 15 disjoint two-cycles $\tau_1,\dots,\tau_{15}$, and the identity. Assuming inequivalent representations, similarly to $A_4$, the identity is a unique entry in each of
$$K(f_a,f_b),\,K(\sigma_c,\sigma_d),\,K(\tau_k,\tau_l).$$
For the mixed cases note that the inverse of any $k$-cycle is a $k$-cycle, and the inverse of the disjoint two-cycle is itself. It follows that for any $K$ with inputs of inequivalent type the identity is a unique entry.
\end{proof}

The smallest examples of  non-total non-abelian group duals are the duals of the dihedral group, where for example, $u^{r}_{11}u^{r^2}_{21}=0$, and the quaternion group, where, for example, $u_{11}^ju_{21}^k=0$.  Both these finite duals have classical version $\mathbb{Z}_2\times\mathbb{Z}_2$.

\bigskip

Perfect groups $\Gamma$ provide intermediate liberations $\mathbb{Z}_1\subset \widehat{\Gamma}$ that are potential counterexamples to $\mathbb{Z}_1$ is quantum-excluding. The previous results show that if perfect and total $\widehat{\Gamma}\curvearrowright X$, then $G(X)\neq \mathbb{Z}_1$. The binary icosahedral group $2I$ is an example of a perfect group which is not total. It is an order 120 subgroup of the unit quaternions, and has order six and order 10 generators:
$$s=\frac12(1+i+j+k)\text{ and }t=\frac12(\varphi1+\varphi^{-1}i+j).$$
Here $\varphi=\frac{1+\sqrt{5}}{2}$ is the golden ratio. Direct calculation shows that, for example:
$$u^{s}_{11}u^t_{51}=0.$$
Perhaps speculatively, such a dual is deserving of further study. With it being non-total, Theorem \ref{total} is not a barrier to $\widehat{2I}$ having the Frucht property. If $\widehat{2I}$ does have the Frucht property, a graph $X$ such that $G^+(X)=\widehat{2I}$ is a graph with quantum symmetries but with trivial (classical) automorphism group. Furthermore, such a graph will have quantum symmetries with  a finite quantum automorphism group.

\bigskip

Even if $\widehat{2I}$ does not have the Frucht property, if $\widehat{2I}\curvearrowright X$ such that $G(X)=\mathbb{Z}_1$, then $X$ is a graph with quantum symmetries but with trivial (classical) automorphism group.

\section{The Kac--Paljutkin quantum group}\label{SECKP}

\subsection{Magic Representation Theory}
The Kac--Paljutkin quantum group of order eight \cite{kpa}, $G_0$, has algebra of functions structure:
\begin{equation}C(G_0)=\mathbb{C}\oplus\mathbb{C}\oplus\mathbb{C}\oplus\mathbb{C}\oplus M_2(\mathbb{C}).\label{KP1}\end{equation}
The choice to denote this finite quantum group by $G_0$  harks back to the original notation of \cite{kpa}, and the zero reminds this is a `ground zero' for compact quantum groups, the smallest one neither commutative nor cocommutative. Let us denote by $f_1,f_2,f_3,f_4$ the generators of the four copies of $\mathbb{C}$ and by $E_{ij}$, $1\leq i,j\leq 2$ the standard generators of $M_2(\mathbb{C})$.

\bigskip

The counit is given by $\varepsilon(f_1)=1$ (and zero elsewhere). The antipode is given by $S(f_i)=f_i$, and, where $E_{kl}$ are matrix units in the two dimensional summand, $S(E_{kl})=E_{lk}$. The comultiplication is well presented in \cite{frg}. The Haar measure is given by:
$$\int_{G_0}f_i=\frac{1}{8}\text{, and }\int_{G_0}E_{kl}=\frac{\delta_{kl}}{4}.$$
Where $I_2$ is the identity in the $M_2(\mathbb{C})$ summand, and the projection
\begin{equation}\label{EQpKP}p:=\left(0,0,0,0,\left(\begin{array}{cc}
                    \frac12 & \frac12 e^{-i\pi/4} \\
                    \frac12 e^{+i\pi/4} & \frac12
                  \end{array}\right)\right),\end{equation}
a concrete exhibition of $G_0\subset S_4^+$ (Th. 1.1 (8), \cite{bb3}) comes by the fundamental magic representation:
\begin{align}
u^{G_0}:=\left[\begin{array}{cccc}
                            f_1+f_2 & f_3+f_4 & p & I_2-p \\
                            f_3+f_4 & f_1+f_2 & I_2-p & p \\
                            p^T & I_2-p^T & f_1+f_3 & f_2+f_4 \\
                            I_2-p^T & p^T & f_2+f_4 & f_1+f_3
                          \end{array}\right]. \label{KPMU}
\end{align}
Another transitive magic representation $w\in M_4(C(G_0))$, whose entries do not generate $C(G_0)$, is given by:
\begin{equation}\label{EQW}w=\begin{pmatrix}
      f_1+f_4 & f_2+f_3 & E_{11} & E_{22} \\
      f_2+f_3 & f_1+f_4 & E_{22} & E_{11} \\
      E_{11} & E_{22} & f_1+f_4 & f_2+f_3 \\
      E_{22} & E_{11} & f_2+f_3 & f_1+f_4
    \end{pmatrix}.\end{equation}
There are three transitive magic representations $x,y,z\in M_2(G_0)$ given by
\begin{align}x&=\begin{pmatrix}
        f_1+f_2+f_3+f_4 & I_2 \\
        I_2 & f_1+f_2+f_3+f_4
      \end{pmatrix},
\\y&=\begin{pmatrix}
      f_1+f_4+E_{11} & f_2+f_3+E_{22} \\
      f_{2}+f_3+E_{22} & f_1+f_4+E_{11}
    \end{pmatrix},
\\ z&=\begin{pmatrix}
      f_1+f_4+E_{22} & f_2+f_3+E_{11} \\
      f_{2}+f_3+E_{11} & f_1+f_4+E_{22}
    \end{pmatrix}.\label{KPMU2}\end{align}

\begin{proposition}\label{ProKPB}
The full set of inequivalent transitive magic representations of $C(G_0)$ is
$$M=\{u^{G_0},w,x,y,z,1_{G_0}\}.$$
 If $G_0\subset S_N^+$,  then $u$ can be taken to be block diagonal, with blocks from $M$, with $u^{G_0}$ necessarily appearing.
\end{proposition}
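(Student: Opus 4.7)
The plan is to first verify that $M$ gives an exhaustive list of pairwise inequivalent transitive magic representations of $C(G_0)$, and then to deduce the block-diagonal statement from Proposition \ref{PROPBLO} together with a short generation check. Pairwise inequivalence of the six representations in $M$ is immediate by inspection: only $u^{G_0}$ has entries involving the off-diagonal matrix units $E_{12}$ and $E_{21}$; within each fixed dimension, the patterns of the central characters $f_i$ and of the diagonal units $E_{ii}$ distinguish $x, y, z$ from one another and $u^{G_0}$ from $w$, so no conjugation by a permutation matrix can identify any two of the listed representations.

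For completeness, the dimension $N$ of any transitive magic representation of $C(G_0)$ must divide $\dim C(G_0) = 8$, so $N \in \{1,2,4,8\}$, with such representations corresponding to transitive coactions of $C(G_0)$ on $\mathbb{C}^N$, equivalently to quantum quotient spaces $G_0/H$ as $H$ ranges over quantum subgroups of $G_0$ of order $8/N$. One approach is to enumerate the (finite) quantum subgroup lattice of $G_0$, which is tractable at this size (cf.\ \cite{bb3}), and match each subgroup with its transitive magic representation. A concrete alternative is to work directly inside $C(G_0) = \mathbb{C}^4 \oplus M_2(\mathbb{C})$: for each $N \in \{2,4\}$, write down the most general $N \times N$ matrix whose entries are projections of this algebra with rows and columns summing to the unit, impose the comultiplication compatibility $\Delta(u_{ij}) = \sum_k u_{ik} \otimes u_{kj}$ using the coproduct formula recalled in \cite{frg}, and observe that the limited projection lattice of $\mathbb{C}^4 \oplus M_2(\mathbb{C})$ leaves, up to permutation equivalence, exactly the representations on the list. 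I expect this classification to be the main obstacle, primarily on account of the bookkeeping involved in traversing the projections of $M_2(\mathbb{C})$ and verifying coproduct compatibility case by case.

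For the second assertion, I apply Proposition \ref{PROPBLO} to obtain, up to conjugation by a permutation matrix, a block-diagonal magic unitary $v = PuP^{-1}$ with blocks drawn with multiplicity from $M$, whose entries jointly generate $C(G_0)$. That $u^{G_0}$ must appear among these blocks is then a short inspection: by (\ref{KPMU}), (\ref{EQW}), and (\ref{KPMU2}), the entries of $1_{G_0}, w, x, y, z$ all lie in the proper $\ast$-subalgebra $\mathbb{C}^4 \oplus \mathbb{C} E_{11} \oplus \mathbb{C} E_{22}$ of $C(G_0)$ --- none of them involves $E_{12}$ or $E_{21}$ --- whereas $u^{G_0}$ contains the projection $p$ from (\ref{EQpKP}), whose off-diagonal $M_2(\mathbb{C})$ entries $\frac{1}{2} e^{\pm i\pi/4}$ are nonzero. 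Hence the blocks can jointly generate $C(G_0)$ only if $u^{G_0}$ is among them.
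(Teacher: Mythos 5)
Your treatment of the second assertion is correct and in fact fleshes out a step the paper leaves terse: all entries of $1_{G_0},w,x,y,z$ (and of any permutation conjugate of them) lie in the proper $*$-subalgebra spanned by the $f_i$ and $E_{11},E_{22}$, so a family of blocks whose entries generate $C(G_0)$ must include $u^{G_0}$; pairwise inequivalence of the six listed representations is likewise routine. The genuine gap is that the main content of the proposition --- completeness of the list --- is only described, not proved. For $N\in\{2,4\}$ you state the method (impose magicness and $\Delta(u_{ij})=\sum_k u_{ik}\otimes u_{kj}$, traverse the projections of $\mathbb C^4\oplus M_2(\mathbb C)$) and then defer it as ``the main obstacle.'' That deferred computation is exactly where the paper's proof does its work: from $\int_{G_0}v_{11}=1/N$ and $\varepsilon(v_{11})=1$ one reduces to $v_{11}=f_1+f_2+f_3+f_4$ (giving $x$) or $v_{11}=f_1+f_i+q$ with $q$ rank one, and then comparing the coefficients of $E_{11}\otimes E_{11}$ and $E_{21}\otimes E_{12}$ in $\Delta(v_{11})$ against those in $v_{11}\otimes v_{11}+v_{12}\otimes v_{21}$ forces $i=4$ and $q\in\{E_{11},E_{22}\}$, yielding $y,z$; the $N=4$ case is an analogous but longer analysis pinning $q$ down to the projection $p$ of (\ref{EQpKP}) or to $E_{11},E_{22}$, yielding $u^{G_0}$ and $w$ up to conjugation. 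Without carrying this out, the proposition is not established.

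Two smaller points. Your reduction to $N\in\{1,2,4,8\}$ rests on an unproved divisibility claim ($N\mid\dim C(G_0)$); the paper instead gets the size restriction directly from Proposition \ref{THORE}: $\int_{G_0}u_{ij}=1/N$ must be the Haar measure of a nonzero projection of $\mathbb C^4\oplus M_2(\mathbb C)$, hence a positive multiple of $1/8$, which excludes $N=3$ and all $N\geq5$ other than $N=8$; and $N=8$ is excluded because the only projections of Haar measure $1/8$ are $f_1,\dots,f_4$, which cannot fill a row of length eight. As written, your proposal never rules out $N=8$ even though your own dimension count allows it. Finally, the alternative route via the quantum subgroup lattice is not obviously available: transitive actions on finite sets need not be of quotient type $G_0/H$, so that identification would itself require justification.
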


\begin{proof}
We have to show in the first instance that there are, up to equivalence, only six transitive magic representations.

\bigskip

(1) The only magic unitary in $M_1(C(G_0))$ is $1_{\mathfrak{G_0}}$.

\bigskip

(2)  Suppose $v$ is a transitive magic representation in $M_2(C(G_0))$. Suppose the $M_2(\mathbb{C})$ factor of $v_{11}$ is zero.  As we must have by Proposition \ref{THORE} $\int_{G_0} v_{11}=1/2$, it follows that $v_{11}=x_{11}$ and so $v=x$.

\bigskip

We cannot have $v_{11}=I_2$, as we must have $\varepsilon(v_{11})=1$. Therefore assume we split $I_2=q+(I_2-q)$ into  rank one projections:
$$q=\begin{pmatrix}
      q_{11} & q_{12} \\
      q_{21} & 1-q_{11}
    \end{pmatrix}\implies \int_{G_0}q=\frac14.$$
Furthermore to satisfy $\int_{G_0} v_{11}=1/2$, we must have, where $i,j,k$ is a permutation of $\{2,3,4\}$:
$$v=\begin{pmatrix}
      f_1+f_i+q & f_j+f_k+I_2-q \\
      f_j+f_k+I_2-q & f_1+f_i+q
    \end{pmatrix}.$$

    \bigskip

    Suppose that $i\neq 4$. When we examine the comultiplication (see \cite{frg}), we note that the coefficient of $E_{11}\otimes E_{11}$ in $\Delta(f_1)$ and $\Delta(f_4)$ is $1/2$, and they are the only basis elements to have such a term in their comultiplication. To have:
$$\Delta(v_{11})=v_{11}\otimes v_{11}+v_{12}\otimes v_{21},$$
we require therefore that the coefficient of $E_{11}\otimes E_{11}$ in
$$q\otimes q+(I_2-q)\otimes(I_2-q)$$
is also $1/2$. That is we require:
$$q_{11}^2+(1-q_{11})^2=\frac12 \implies q_{11}=\frac12.$$
This implies further that
$$q=\begin{pmatrix}
      \frac12 & \frac12 e^{i\theta} \\
      \frac12 e^{-i\theta} & \frac12
    \end{pmatrix}.$$

Now we examine the coefficient of $E_{21}\otimes E_{12}$ in $\Delta(v)=v_{11}\otimes v_{11}+v_{12}\otimes v_{21}$. This is:
\begin{equation}\label{ETC}\frac12 e^{-i\theta}\frac12 e^{i\theta}+\left(-\frac12 e^{-i\theta}\right)\left(-\frac12 e^{i\theta}\right)=+\frac12.\end{equation}
However the coefficients of $E_{21}\otimes E_{12}$ in $\Delta(f_2)$ and $\Delta(f_3)$ are  $\pm i/2$. We conclude we cannot have $i\neq 4$.

\bigskip

Now suppose that $i=4$. The coefficient of $E_{11}\otimes E_{11}$ in $\Delta(f_1+f_4+q)$ is one, implying that $q=E_{11}$ or $E_{22}$. With $q=E_{11}$ we get $y$, and with $q=E_{22}$ we get $z$.

\bigskip

(3) We cannot have a transitive magic representation $u\in M_3(C(G_0))$ as we cannot achieve $\int_G u_{ij}=1/3$. Similar considerations rule out transitive magic representations of size greater than four.

\bigskip

(4) It remains to show that, up to relabelling, $w$ and $u^{G_0}$ are the unique transitive magic representations in $M_4(C(G_0))$. Suppose $u$ is a transitive magic representation in $M_4(C(G_0))$. From the arguments above concerning the Haar measure and the counit, we know that the first row must be $f_1+f_i$ followed by a permutation of $\{f_j+f_k,q,I_2-q\}$, for some rank one projection $q\in M_2(\mathbb{C})$. We can assume without loss of generality that
$$u=\begin{pmatrix}
      f_1+f_i & f_j+f_k & q & I_2-q \\
      * & * & * & * \\
      * & * & * & * \\
      * & * & * & *
    \end{pmatrix}.$$
Inspection of the comultiplication shows that where $A$ is the direct sum of the one dimensional summands in $C(G_0)$, and $B$ the two dimensional summand:
$$\Delta(A)\subset A\otimes A+B\otimes B\text{ and }\Delta(B)=A\otimes B+B\otimes A.$$
In addition, with the antipode on $B$ given by the transpose,  $S(u_{ij})=u_{ji}$, and with $a,b,c$ another permutation of $\{2,3,4\}$, this implies that $u$ must be of the form :
$$u=\begin{pmatrix}
      f_1+f_i & f_j+f_k & q & I_2-q \\
      f_j+f_k & f_1+f_i & I_2-q & q \\
      q^T & I_2-q^T & f_1+f_a & f_b+f_c \\
      I_2-q^T & q^T & f_b+f_c & f_1+f_a
    \end{pmatrix}.$$
(a) Suppose that $i=2$:
$$u=\begin{pmatrix}
      f_1+f_2 & f_3+f_4 & q & I_2-q \\
      f_3+f_4 & f_1+f_2 & I_2-q & q \\
      q^T & I_2-q^T & f_1+f_a & f_b+f_c \\
      I_2-q^T & q^T & f_b+f_c & f_1+f_a
    \end{pmatrix}.$$
    By looking at the $E_{11}\otimes E_{11}$ term in $\Delta(u_{11})$ and in
    $$u_{13}\otimes u_{31}+u_{14}\otimes u_{41}= q\otimes q^T+(I_2-q)\otimes (I_2-q^T),$$
     we find:
    $$q_{11}^2+(1-q_{11})^2=\frac{1}{2}\implies q_{11}=\frac{1}{2},$$
    and so
    $$q=\begin{pmatrix}
          \frac12 & \frac12 e^{i\theta} \\
          \frac{1}{2}e^{-i\theta} & \frac12
        \end{pmatrix}\text{ and }q^T=\begin{pmatrix}
          \frac12 & \frac12 e^{-i\theta} \\
          \frac{1}{2}e^{+i\theta} & \frac12
        \end{pmatrix}.$$
        Examining now the $E_{21}\otimes E_{12}$ terms in $\Delta(u_{11})$ and in $u_{13}\otimes u_{31}+u_{14}\otimes u_{41}$ we find
       $$
          \frac12 e^{-i\theta}\frac12 e^{-i\theta}+\left(-\frac12 e^{-i\theta}\right)\left(-\frac12 e^{-i\theta}\right)=\frac{i}{2}$$
          \begin{align*}
          \implies \theta&= -\pi/4
          \\ \implies q&=p,
        \end{align*}
        as in $p$ from (\ref{EQpKP}). This implies that we have:
        $$u=\begin{pmatrix}
      f_1+f_2 & f_3+f_4 & p & I_2-p \\
      f_3+f_4 & f_1+f_2 & I_2-p & p \\
      p^T & I_2-p^T & f_1+f_a & f_b+f_c \\
      I_2-p^T & p^T & f_b+f_c & f_1+f_a
    \end{pmatrix}$$
    The coefficient of $E_{12}\otimes E_{21}$ in $\Delta(u_{33})$ here is $+i/2$ but only $\Delta(f_3)$  contains such a term and we conclude that $a=3$, and $u=u^{G_0}$.

    \bigskip

    (b) Carrying out the same analysis for $i=3$ we find the rank one projection $q=p^T$. Subsequently looking at the $E_{21}\otimes E_{12}$  coefficient of $\Delta(u_{33})$ gives:
    $$u=\begin{pmatrix}
      f_1+f_3 & f_2+f_4 & p^T & I_2-p^T \\
      f_2+f_4 & f_1+f_3 & I_2-p^T & p^T \\
      p & I_2-p & f_1+f_2 & f_3+f_4 \\
      I_2-p & p & f_3+f_4 & f_1+f_2
    \end{pmatrix}$$
    Where $P$ is the permutation matrix of $(13)(24)$:
     $$u^{G_0}=PuP^{-1},$$
     and $u$ is equivalent to $u^{G_0}$

    \bigskip

    (c) If $i=4$ then we have
    $$u=\begin{pmatrix}
      f_1+f_4 & f_2+f_3 & q & I_2-q \\
      f_3+f_2 & f_1+f_4 & I_2-q & q \\
      q^T & I_2-q^T & f_1+f_a & f_b+f_c \\
      I_2-q^T & q^T & f_b+f_c & f_1+f_a
    \end{pmatrix}$$
    Examining the coefficient of  $E_{11}\otimes E_{11}$ in $\Delta(u_{11})$ we require $q=E_{11}$ or $E_{22}$. If $q=E_{11}$:
    $$u=\begin{pmatrix}
      f_1+f_4 & f_2+f_3 & E_{11} & E_{22} \\
      f_3+f_2 & f_1+f_4 & E_{22} & E_{11} \\
      E_{11} & E_{22} & f_1+f_a & f_b+f_c \\
      E_{22} & E_{11} & f_b+f_c & f_1+f_a
    \end{pmatrix}$$
    Examining $\Delta(u_{31})=\Delta(E_{11})$ we must have $a=4$ and so:
    $$u=\begin{pmatrix}
      f_1+f_4 & f_2+f_3 & E_{11} & E_{22} \\
      f_3+f_2 & f_1+f_4 & E_{22} & E_{11} \\
      E_{11} & E_{22} & f_1+f_4 & f_2+f_3 \\
      E_{22} & E_{11} & f_2+f_3 & f_1+f_4
    \end{pmatrix}=w.$$
   If we go back and let $q=E_{22}$ we get a transitive magic representation equivalent to $w$ by the permutation matrix $P$ corresponding to $(34)$.

   \bigskip

   Note finally that we cannot generate $C(G_0)$ without including $u^{G_0}$. Proposition \ref{PROPBLO} gives the rest.
\end{proof}

\subsection{Orbitals}

Where $u^{G_0}$ occurs with multiplicity $m_0\geq 1$, $r\in\{w,x,y,z\}$ appears with multiplicity $m_r\geq 0$,  and $1_{G_0}$ occurs with multiplicity $m_1\geq 0$, the action $G_0\curvearrowright X$ partitions
\begin{align*}
V&=\left(\bigsqcup_{m=1}^{m_0}V_0^{(m)}\right)\sqcup \left(\bigsqcup_{r\in \{w,x,y,z\}}\left(\bigsqcup_{m=0}^{m_r}V_r^{(m)}\right)\right)\sqcup\left(\bigsqcup_{m=0}^{m_1} V_{1}^{(m)}\right). \end{align*}
In terms of orbital types, there are twelve type $V_p\times V_p$ orbitals, and fifteen type $V_p\times V_q$  orbitals to study. Where $r,t\in\{x,y,z\}$, $r\neq t$, and $s\in\{y,z\}$, the following 15 edge relations are total:
$$V_0\times V_s,\quad V_0\times V_1,\quad V_w\times V_1,\quad  V_r^{(a)}\times V_r^{(a)},\quad V_r\times V_t, \quad V_r\times V_1,\quad V_1^{(a)}\times V_1^{(a)},\quad V_1^{(a)}\times V_1^{(b)}.$$
Here with $r\in\{x,y,z\}$, 12 orbital types remain:
$$
 V_0^{(a)}\times V_0^{(a)},\quad V_{0}^{(a)}\times V_{0}^{(b)},\quad V_0\times V_w,\quad V_0\times V_x,$$
$$V_w^{(a)}\times V_w^{(a)},\quad V_w^{(a)}\times V_w^{(b)},\quad V_w\times V_r,\quad V_r^{(a)}\times V_{r}^{(b)}.
$$
We will determine these in a series of propositions.

\bigskip

The choice to label $V$ according to Proposition \ref{ProKPB} means that blocks are consecutive integers, and to use only the six transitive magic representations listed, and not the other transitive magic representations equivalent to $u^{G_0}$ and respectively $w$, confers a fixed orientation to any four block $\{i,i+1,i+2,i+3\}$ of type $V_0$ or $V_w$. When we draw a four block as a square we assume a clockwise orientation, and when we draw a block $\{j,j+1,j+2,j+3\}$ or $\{k,k+1\}$ as a vertical line, we assume an orientation where `down' is the positive direction:
\adjustbox{scale=1,center}{%
$$\begin{tikzcd}
    &     & {} \arrow[ddd, no head] & \substack{j\\ \bullet}   & {} \arrow[ddd,no head] &     \\
\substack{i\\ \bullet}   & \substack{i+1\\\bullet} &                         & \substack{j+1\\\bullet} &                & \substack{k\\\bullet}   \\
\substack{\bullet\\ i+3} & \substack{\bullet\\ i+2} &                         & \substack{j+2 \\\bullet} &                & \substack{\bullet\\ k+1} \\
    &     & {}                      & \substack{j+3\\ \bullet} & {}             &
\end{tikzcd}$$
}

\bigskip

This first proposition implicitly describes the  $V_0^{(a)}\times V_0^{(a)}$ orbitals:
\begin{proposition}\label{ProKP1}
If $G_0\curvearrowright X$, we have that the induced subgraph $X(V_0^{(m)})$ is one of:
\adjustbox{scale=0.6,center}{%
\begin{tikzcd}
  &   & {} \arrow[ddd, no head] &              &              & {}                      &             &   & {}                      &                                                             &                      \\
\bullet & \bullet &                         & \bullet \arrow[rd, no head] & \bullet \arrow[ld, no head] &                         & \bullet \arrow[r, no head] & \bullet &                         & \bullet \arrow[r, no head] \arrow[d, no head] \arrow[rd, no head] & \bullet \arrow[d, no head] \\
\bullet & \bullet &                         & \bullet \arrow[u, no head]  & \bullet \arrow[u, no head]  &                         & \bullet \arrow[r, no head] & \bullet &                         & \bullet \arrow[r, no head] \arrow[ru, no head]                    & \bullet                    \\
  &   & {}                      &              &              & {} \arrow[uuu, no head] &             &   & {} \arrow[uuu, no head] &                                                             &
\end{tikzcd}
}
\end{proposition}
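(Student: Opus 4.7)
The plan is to apply Theorem~\ref{orbitalgraph2} to the block $V_0^{(m)}$: the induced subgraph $X(V_0^{(m)})$ is determined by a choice, for each orbital of $G_0$ contained in $V_0^{(m)}\times V_0^{(m)}$ and disjoint from the diagonal relation, of whether it lies in $E$ or $E^c$. So I will identify these orbitals directly from the entries of $u^{G_0}$ in (\ref{KPMU}), using $(i,k)\sim(j,l)\iff u_{ij}^{G_0}u_{kl}^{G_0}\neq 0$. To ease notation, relabel $V_0^{(m)}$ as $\{1,2,3,4\}$.

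The key structural observation is that $C(G_0)$ decomposes via (\ref{KP1}) as the orthogonal direct sum of the four one-dimensional $f$-summands and the $M_2(\mathbb{C})$ summand, so $f_i\cdot x=0$ whenever $x$ lies in $M_2(\mathbb{C})$. Reading off (\ref{KPMU}), the entries of $u^{G_0}$ at the four positions $(i,j)$ with $\{i,j\}\subseteq\{1,2\}$ or $\{i,j\}\subseteq\{3,4\}$ lie in the $f$-part, while the eight remaining off-diagonal positions lie in the $M_2$-part. Hence no $f$-type off-diagonal pair can be orbitally equivalent to any $M_2$-type off-diagonal pair, and the off-diagonal orbitals split between these two subsets.

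Next I would check that each of the two parts is a single orbital. For the $f$-part, using $f_if_j=\delta_{ij}f_i$, short calculations give $u_{12}^{G_0}u_{21}^{G_0}=f_3+f_4\neq 0$, $u_{13}^{G_0}u_{24}^{G_0}=p^2=p\neq 0$, and $u_{14}^{G_0}u_{23}^{G_0}=(I_2-p)^2\neq 0$, linking the four off-diagonal $f$-type pairs into a single orbital $o_1=\{(1,2),(2,1),(3,4),(4,3)\}$. For the $M_2$-part, products such as $u_{12}^{G_0}u_{33}^{G_0}=(f_3+f_4)(f_1+f_3)=f_3\neq 0$, $u_{11}^{G_0}u_{34}^{G_0}=(f_1+f_2)(f_2+f_4)=f_2\neq 0$, and $u_{13}^{G_0}u_{31}^{G_0}=pp^T\neq 0$ chain all eight cross-pairs into a single orbital $o_2$.

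With exactly two off-diagonal orbitals, Theorem~\ref{orbitalgraph2} produces $2^2=4$ possible induced subgraphs on $V_0^{(m)}$: excluding both yields the empty graph; including only $o_1$ gives the two parallel edges $\{1,2\},\{3,4\}$; including only $o_2$ gives the $4$-cycle through the cross-pairs shown in the second diagram; and including both yields $K_4$. The main obstacle is the bookkeeping for the $M_2$-part: one must verify, using the explicit form (\ref{EQpKP}) of $p$, that enough of the products among $p,I_2-p,p^T,I_2-p^T$ are nonzero to collapse all eight cross-pairs into a single orbital. This is routine matrix arithmetic but requires care.
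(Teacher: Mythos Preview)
Your proposal is correct and follows the same approach as the paper: compute the two off-diagonal orbitals of $u^{G_0}$ on $\{1,2,3,4\}$ and invoke Theorem~\ref{orbitalgraph2}. The paper's own proof simply points back to the $H_2^+\subset S_4^+$ illustration (same block pattern, hence same orbitals), whereas you recompute from (\ref{KPMU}) directly; the content is identical.

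One genuine gap to close: the ``Hence'' in your second paragraph does not follow from orthogonality of the $f$- and $M_2$-summands alone. Orthogonality tells you $u_{ij}u_{kl}=0$ when the \emph{entries} at positions $(i,j)$ and $(k,l)$ lie in different summands, but the orbital relation concerns the \emph{pairs} $(i,k)$ and $(j,l)$. What you need is the parity observation: with $\epsilon(1)=\epsilon(2)=0$, $\epsilon(3)=\epsilon(4)=1$, the entry $u_{ab}$ is $f$-type iff $\epsilon(a)=\epsilon(b)$; so if $(i,k)$ is within a block ($\epsilon(i)=\epsilon(k)$) and $(j,l)$ crosses blocks ($\epsilon(j)\neq\epsilon(l)$), then exactly one of $u_{ij},u_{kl}$ is $f$-type and the other is $M_2$-type, whence their product vanishes. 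State this explicitly. Also, your three sample products for $o_2$ link $(1,3)$ only to $(2,3),(1,4),(3,1)$; one more product such as $u_{12}u_{44}=f_3\neq 0$ (giving $(1,4)\sim(2,4)$), together with closure under inverses, is needed to finish chaining all eight cross-pairs.
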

\begin{proof}
This follows in the same manner to that of $H_2^+\subset S_4^+$ acting on a graph (worked out as an illustration of Theorem \ref{orbitalgraph3}). Of most importance are the middle graphs which are the $V_0^{(a)}\times V_0^{(a)}$ orbitals.
\end{proof}

\begin{proposition}\label{PROKP2}
 Suppose $G_0\curvearrowright X$ with $m_0\geq 2$. Then the  $V_0^{(a)}\times V_0^{(b)}$ orbitals are:

\medskip

\adjustbox{scale=0.6,center}{%
$$\begin{tikzcd}
\bullet \arrow[r,no head] & \bullet & {} \arrow[ddd,no head] & \bullet \arrow[rd,no head] & \bullet & {} \arrow[ddd,no head] & \bullet \arrow[rdd,no head] \arrow[rddd,no head] & \bullet \\
\bullet \arrow[r,no head] & \bullet &                & \bullet \arrow[ru,no head] & \bullet &                & \bullet \arrow[rd,no head] \arrow[rdd,no head]   & \bullet \\
\bullet \arrow[r,no head] & \bullet &                & \bullet \arrow[rd,no head] & \bullet &                & \bullet \arrow[ruu,no head] \arrow[ru,no head]   & \bullet \\
\bullet \arrow[r,no head] & \bullet & {}             & \bullet \arrow[ru,no head] & \bullet & {}             & \bullet \arrow[ruu,no head] \arrow[ruuu,no head] & \bullet
\end{tikzcd}
$$}

\medskip

\end{proposition}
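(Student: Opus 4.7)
The statement follows essentially immediately by combining Proposition \ref{ProDia} with the within-block orbital structure derived in Proposition \ref{ProKP1}, so the plan is simply to spell out this combination.

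First I would apply Proposition \ref{ProDia} with $p = 0$ and shift parameter $n = b - a$. With both $V_0^{(a)}$ and $V_0^{(b)}$ carrying the same transitive magic representation $u^{G_0}$, and blocks placed consecutively so that $N_0 = 4$, the proposition asserts that the $V_0^{(a)} \times V_0^{(b)}$ orbitals are precisely the images, under the shift $(i,j) \mapsto (i, j + 4(b-a))$, of the orbitals of $V_0^{(m)} \times V_0^{(m)}$, including the diagonal one.

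Next I would extract from Proposition \ref{ProKP1} the within-block orbital list. By Theorem \ref{orbitalgraph2}, the four displayed induced subgraphs of $V_0^{(m)}$ correspond to the four subsets of the non-diagonal orbitals of $V_0^{(m)} \times V_0^{(m)}$; reading off the $K_{2,2}$ picture (second graph) and the horizontal-matching picture (third graph), one extracts two non-diagonal orbitals
\[
o_1 = \bigl(\{i,i+1\}\times\{i+2,i+3\}\bigr)\cup\bigl(\{i+2,i+3\}\times\{i,i+1\}\bigr),\qquad o_2 = \{(i,i+1),(i+1,i),(i+2,i+3),(i+3,i+2)\},
\]
which, together with the diagonal orbital, partition $V_0^{(m)} \times V_0^{(m)}$ into $8+4+4=16$ pairs. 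Shifting each by $4(b-a)$ in the second coordinate produces exactly the three $V_0^{(a)} \times V_0^{(b)}$ orbitals depicted: the shifted diagonal gives the identity-matching picture, the shifted $o_2$ gives the adjacent-swap picture, and the shifted $o_1$ gives the halves-to-opposite-halves bipartite picture.

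The only potential obstacle is the worry that some across-block product $u_{rs}u_{r's'}$ could vanish where its within-block counterpart does not, forcing further splitting of an orbital. This is ruled out because $u_{rs}$ and $u_{r+4(b-a),\,s+4(b-a)}$ are literally the same element of $C(G_0)$, so the zero-pattern on pairwise products is preserved under the shift. This is precisely the content of Proposition \ref{ProDia}, and the proof therefore reduces to reading off these three shifts.
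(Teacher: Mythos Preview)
Your proposal is correct and follows essentially the same route as the paper: identify the two non-diagonal within-block orbitals from Proposition~\ref{ProKP1}, then invoke Proposition~\ref{ProDia} to shift them (together with the diagonal) across to $V_0^{(a)}\times V_0^{(b)}$. The paper's proof is terser, simply listing the three orbitals $o_d,o_b,o_s$ explicitly and citing the same two propositions, but the argument is the same.
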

\begin{proof}
Here are the $V_0^{(a)}\times V_0^{(b)}$ orbitals.
\begin{align*}
o_d&=\{(i^{(a)}+j,i^{(b)}+j)\colon j\in\{0,\dots,3\}\}
\\ o_b&=\{(i^{(a)},i^{(b)}+1),(i^{(a)}+1,i^{(b)}),(i^{(a)}+2,i^{(b)}+3),(i^{(a)}+3,i^{(b)}+2)\}
\\ o_s&=[\{i^{(a)},i^{(a)}+1\}\times\{i^{(b)}+2,i^{(b)}+3\}]\cup [\{i^{(a)}+2,i^{(a)}+3\}\times\{i^{(b)},i^{(b)}+1\}]
\end{align*}
From Proposition \ref{ProKP1} there are two orbitals disjoint of the diagonal relation with $G_0\subset S_4^+$, and $o_b$ and $o_s$ correspond to these relations. As per Proposition \ref{ProDia}, the third orbital $o_d$ comes from the relations of the diagonal orbital.
\end{proof}
The following propositions are proven by routine calculations based on the defining formulas (\ref{KPMU})-(\ref{KPMU2}).
 \begin{proposition}\label{KP3}
    Suppose $G_0\curvearrowright X$ with $m_w\geq 1$. Then the $V_0\times V_w$ orbitals are:

\medskip

\adjustbox{scale=0.6,center}{%
$$\begin{tikzcd}
\bullet \arrow[r, no head] \arrow[rd, no head] & \bullet & {} \arrow[ddd, no head] & \bullet \arrow[rdd, no head] \arrow[rddd, no head] & \bullet \\
\bullet \arrow[r, no head] \arrow[ru, no head] & \bullet &                         & \bullet \arrow[rdd, no head] \arrow[rd, no head]   & \bullet \\
\bullet \arrow[r, no head] \arrow[rd, no head] & \bullet &                         & \bullet \arrow[ruu, no head] \arrow[ru, no head]   & \bullet \\
\bullet \arrow[r, no head] \arrow[ru, no head] & \bullet & {}                      & \bullet \arrow[ruu, no head] \arrow[ruuu, no head] & \bullet
\end{tikzcd}$$
}

\medskip

\end{proposition}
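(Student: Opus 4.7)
The plan is to read off the orbitals directly from the definition $(i,k)\sim(j,l)\iff u^{G_0}_{ij}\,w_{kl}\neq 0$, exploiting the algebra decomposition $C(G_0)=\mathbb{C}^4\oplus M_2(\mathbb{C})$ from (\ref{KP1}) to drastically cut down the casework.

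First I would perform a support classification of the entries of $u^{G_0}$ and $w$. Inspecting (\ref{KPMU}) and (\ref{EQW}), the entry $u^{G_0}_{ij}$ lies in the $\mathbb{C}^4$ summand precisely when $(i,j)$ sits in the ``diagonal quadrants'' $(\{1,2\}\times\{1,2\})\cup(\{3,4\}\times\{3,4\})$, and in the $M_2(\mathbb{C})$ summand for the complementary ``off-diagonal quadrants''; the same dichotomy governs $w$. Since the two summands annihilate one another inside $C(G_0)$, the product $u^{G_0}_{ij}w_{kl}$ automatically vanishes whenever $(i,j)$ and $(k,l)$ lie on opposite sides of this dichotomy, killing half the cases at a stroke.

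Next I would check that in the two surviving cases the product is never zero. In the $\mathbb{C}^4$ case the relevant entries are of the form $f_a+f_b$, with the two-element sets belonging to $\{\{1,2\},\{3,4\},\{1,3\},\{2,4\}\}$ for $u^{G_0}$ and to $\{\{1,4\},\{2,3\}\}$ for $w$; each of the eight combinations shares exactly one index, so the product is a single nonzero $f_c$. In the $M_2(\mathbb{C})$ case the entries of $u^{G_0}$ are drawn from $\{p,I_2-p,p^T,I_2-p^T\}$ and those of $w$ from $\{E_{11},E_{22}\}$; using (\ref{EQpKP}), the matrices $p$ and $I_2-p$ have no zero row or column, so each of the eight products is a nonzero rank-one matrix. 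This $M_2$ bookkeeping is the only step with real content, and the closest thing to an obstacle.

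Finally I would repackage the outcome as a partition of $V_0\times V_w$. Let $\pi$ be the ``half indicator'' sending the first two labels of a block to $0$ and the last two to $1$. The analysis above then shows
$$(i,k)\sim(j,l)\iff \pi(i)+\pi(k)\equiv \pi(j)+\pi(l)\pmod{2},$$
giving exactly two orbitals of size eight, namely $\{(i,k):\pi(i)=\pi(k)\}$ and $\{(i,k):\pi(i)\neq\pi(k)\}$. After symmetrising with their inverses as in Theorem \ref{orbitalgraph2}, these are precisely the two bipartite graphs depicted in the statement.
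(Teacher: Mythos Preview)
Your argument is correct and is exactly the kind of ``routine calculation based on the defining formulas (\ref{KPMU})--(\ref{KPMU2})'' that the paper invokes in lieu of a proof; you have simply organised it more cleanly by exploiting the $\mathbb{C}^4\oplus M_2(\mathbb{C})$ block structure to halve the casework. One small remark: the final ``symmetrising with inverses'' step is superfluous here, since $V_0\neq V_w$ means $o^{-1}\subset V_w\times V_0$ and so does not affect the $V_0\times V_w$ picture --- the two eight-element sets you found are already precisely the two bipartite graphs in the statement.
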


\begin{proposition}\label{PROKP4}
   Suppose $G_0\curvearrowright X$ with $m_x\geq 1$. Then the  $V_0\times V_x$ orbitals are:

\medskip

   \adjustbox{scale=0.6,center}{%
$$\begin{tikzcd}
\bullet \arrow[rd, no head] &         & {} \arrow[ddd, no head] & \bullet \arrow[rdd, no head] &         \\
\bullet \arrow[r, no head]  & \bullet &                         & \bullet \arrow[rd, no head]  & \bullet \\
\bullet \arrow[r, no head]  & \bullet &                         & \bullet \arrow[ru, no head]  & \bullet \\
\bullet \arrow[ru, no head] &         & {}                      & \bullet \arrow[ruu, no head] &
\end{tikzcd}$$
}

\medskip

\end{proposition}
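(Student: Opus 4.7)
The plan is to reduce the orbital calculation to a short case analysis driven by the central decomposition $C(G_0) = \mathbb{C}^{\oplus 4} \oplus M_2(\mathbb{C})$ of (\ref{KP1}). Working with local block indices $i,j \in \{1,\ldots,4\}$ for the $V_0$ block and $k,l \in \{1,2\}$ for the $V_x$ block, and noting that $V_0$ and $V_x$ are distinct diagonal blocks of the fundamental magic representation, the orbital relation $(i,k)\sim(j,l)$ reduces to asking whether the product $u^{G_0}_{ij}\cdot x_{kl}$ is nonzero in $C(G_0)$.

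The first key observation will be that $x$ decomposes along the central summands in a particularly clean way: from (\ref{KPMU2}), its diagonal entries $x_{11}=x_{22}=f_1+f_2+f_3+f_4$ coincide with the unit $e_0$ of the scalar summand $\mathbb{C}^{\oplus 4}$, while its off-diagonal entries $x_{12}=x_{21}=I_2$ coincide with the unit $e_1$ of the matrix summand $M_2(\mathbb{C})$. These are orthogonal central projections with $e_0+e_1=1$, so multiplication of any $a\in C(G_0)$ by $e_0$ (respectively $e_1$) picks out its scalar (respectively matrix) component.

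The second observation is that every entry of $u^{G_0}$ in (\ref{KPMU}) lies entirely in one summand: the $\{1,2\}\times\{1,2\}$ and $\{3,4\}\times\{3,4\}$ subblocks contain only nonzero scalar-summand elements (combinations of the $f_a$), whereas the $\{1,2\}\times\{3,4\}$ and $\{3,4\}\times\{1,2\}$ subblocks contain only nonzero matrix-summand elements ($p$, $I_2-p$, and their transposes). Combining with the previous paragraph, $u^{G_0}_{ij}\cdot x_{kk}\neq 0$ exactly when $(i,j)\in\{1,2\}^2\cup\{3,4\}^2$, and $u^{G_0}_{ij}\cdot x_{kl}\neq 0$ for $k\neq l$ exactly when $(i,j)\in(\{1,2\}\times\{3,4\})\cup(\{3,4\}\times\{1,2\})$.

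Tracing the equivalence classes of $\sim$ on $V_0\times V_x$ from these rules will partition the eight pairs into exactly two orbitals of size four: one that joins the first two vertices of $V_0$ with the first vertex of $V_x$ and the last two vertices of $V_0$ with the second vertex of $V_x$, and another obtained by swapping the two $V_x$-vertices. These are precisely the two graphs drawn in the statement. No serious obstacle is anticipated; the argument is mechanical once the central splitting of $x$ is in hand, in the same spirit as the preceding Propositions \ref{ProKP1}--\ref{KP3}.
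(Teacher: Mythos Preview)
Your argument is correct and is essentially the routine calculation the paper alludes to, just organized cleanly via the central decomposition $C(G_0)=\mathbb{C}^{\oplus 4}\oplus M_2(\mathbb{C})$: noting that $x_{kk}=e_0$ and $x_{kl}=e_1$ ($k\neq l$) are the two central units, and that each entry of $u^{G_0}$ lives entirely in one summand, immediately yields the two four-element orbitals $\{(1,1),(2,1),(3,2),(4,2)\}$ and $\{(1,2),(2,2),(3,1),(4,1)\}$ matching the diagrams. The paper gives no explicit proof beyond ``routine calculations based on the defining formulas (\ref{KPMU})--(\ref{KPMU2})'', so your write-up is in fact more detailed than what appears there.
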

\begin{proposition}\label{PROKP5}
Suppose $G_0\curvearrowright X$ with $m_w\geq 1$. Then the $V_w^{(a)}\times V_w^{(a)}$ orbitals are:

\adjustbox{scale=0.6,center}{%
$$\begin{tikzcd}
\bullet \arrow[r,no head] & \bullet & {} \arrow[d,no head] & \bullet \arrow[rd, no head] & \bullet & {} \arrow[d,no head] & \bullet \arrow[d, no head] & \bullet \arrow[d, no head] \\
\bullet \arrow[r,no head] & \bullet & {}           & \bullet \arrow[ru, no head] & \bullet & {}           & \bullet                    & \bullet
\end{tikzcd}$$
}

\end{proposition}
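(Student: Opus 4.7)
The plan is to read off the orbitals directly from the explicit formula (\ref{EQW}) for $w$, exploiting the block structure $C(G_0)=\mathbb{C}^4\oplus M_2(\mathbb{C})$. First I would observe that the sixteen entries of $w$ assume only four distinct values: the four diagonal entries all equal $f_1+f_4$; four off-diagonal entries equal $f_2+f_3$; four equal $E_{11}$; and four equal $E_{22}$. This ``four-type'' structure is the whole engine of the argument.

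The second step is to check that two entries $w_{ij}$ and $w_{kl}$ satisfy $w_{ij}w_{kl}\neq 0$ if and only if they are of the same type. This reduces to the elementary observations that $\mathbb{C}^4\perp M_2(\mathbb{C})$ inside $C(G_0)$, that the minimal projections $f_1,f_2,f_3,f_4$ are pairwise orthogonal so $(f_1+f_4)(f_2+f_3)=0$, that $E_{11}E_{22}=0$, and that squaring each of the four types returns a nonzero projection. Since $u$ is block diagonal with $w$ on the block $V_w^{(a)}$, restricting the orbital relation $(i,k)\sim(j,l)\iff u_{ij}u_{kl}\neq 0$ to indices inside $V_w^{(a)}$ reduces it to the corresponding relation on entries of $w$ alone, so the orbital classes coincide with the four positional classes of entry-type in (\ref{EQW}).

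Reading these positions off yields the diagonal orbital from $f_1+f_4$ together with the three non-diagonal orbitals $\{(1,2),(2,1),(3,4),(4,3)\}$ from $f_2+f_3$, $\{(1,3),(2,4),(3,1),(4,2)\}$ from $E_{11}$, and $\{(1,4),(2,3),(3,2),(4,1)\}$ from $E_{22}$. Under the clockwise square orientation for $V_w^{(a)}$ fixed just before the proposition (vertices labelled $i$ top-left, $i+1$ top-right, $i+2$ bottom-right, $i+3$ bottom-left), these are precisely the horizontal pair, the crossing diagonal pair, and the vertical pair displayed. I do not anticipate any real obstacle: the work is a routine case check once the four-type observation is in place, and the only care needed is aligning the listed position sets with the geometric patterns in the tikz diagram via the orientation convention.
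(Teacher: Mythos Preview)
Your proposal is correct and follows essentially the same approach as the paper: the paper's proof simply states that by looking for $w_{ij}w_{kl}\neq 0$ one finds the three non-diagonal orbitals $o_1=\{(1,2),(2,1),(3,4),(4,3)\}$, $o_2=\{(1,3),(2,4),(3,1),(4,2)\}$, $o_3=\{(1,4),(2,3),(3,2),(4,1)\}$, and your four-type observation is a clean way to organise precisely that check. The only point worth tightening is the passage from ``$w_{ij}w_{kl}\neq 0$ iff same type'' to ``orbitals $=$ positional classes of entry-type'': this uses that each row of $w$ contains each type exactly once, so that for fixed $(i,k)$ the set $\{(j,l):w_{ij},w_{kl}\text{ same type}\}$ has four elements, and a direct check (or the $\mathbb{Z}_2\times\mathbb{Z}_2$-circulant structure of $w$) confirms these coincide with the positions of type $w_{ik}$.
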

\begin{proof}
  By looking for $w_{ij}w_{kl}\neq 0$ we find three $V_w\times V_w$ orbitals disjoint of the diagonal:
  \begin{align*}
    o_1 & =\{(1,2),(2,1),(3,4),(4,3)\} \\
    o_2 & =\{(1,3),(2,4),(3,1),(4,2)\} \\
    o_3 & =\{(1,4),(2,3),(3,2),(4,1)\}
  \end{align*}
  \end{proof}

  \begin{proposition}\label{PROKP6}
 Suppose $G_0\curvearrowright X$ with $m_w\geq 2$. There are four $V_w^{(a)}\times V_{w}^{(b)}$ orbitals:

\medskip

\adjustbox{scale=0.6,center}{%
 $$\begin{tikzcd}
\bullet \arrow[r, no head] & \bullet & {} \arrow[ddd, no head] & \bullet \arrow[rd, no head] & \bullet & {} \arrow[ddd, no head] & \bullet \arrow[rdd, no head] & \bullet & {} \arrow[ddd,no head] & \bullet \arrow[rddd,no head] & \bullet \\
\bullet \arrow[r, no head] & \bullet &                         & \bullet \arrow[ru, no head] & \bullet &                         & \bullet \arrow[rdd, no head] & \bullet &                & \bullet \arrow[rd,no head]   & \bullet \\
\bullet \arrow[r, no head] & \bullet &                         & \bullet \arrow[rd, no head] & \bullet &                         & \bullet \arrow[ruu, no head] & \bullet &                & \bullet \arrow[ru,no head]   & \bullet \\
\bullet \arrow[r, no head] & \bullet & {}                      & \bullet \arrow[ru, no head] & \bullet & {}                      & \bullet \arrow[ruu, no head] & \bullet & {}             & \bullet \arrow[ruuu,no head] & \bullet
\end{tikzcd}
$$
}

\medskip

 \end{proposition}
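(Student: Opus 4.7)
The plan is to obtain this result as a direct consequence of the general block-diagonal orbital principle in Proposition \ref{ProDia} combined with the single-block orbital list of Proposition \ref{PROKP5}. Since the assumption $m_w \geq 2$ means two (equivalent) copies of the transitive magic representation $w$ appear consecutively in the fundamental magic unitary, the hypotheses of Proposition \ref{ProDia} are in force with $p = w$ and $N_p = 4$.

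First I would invoke Proposition \ref{PROKP5}, which furnishes the complete list of $V_w^{(a)} \times V_w^{(a)}$ orbitals disjoint of the diagonal, namely
\begin{align*}
o_1 &= \{(1,2),(2,1),(3,4),(4,3)\},\\
o_2 &= \{(1,3),(2,4),(3,1),(4,2)\},\\
o_3 &= \{(1,4),(2,3),(3,2),(4,1)\},
\end{align*}
together with the diagonal orbital $\{(i,i)\}$. Then Proposition \ref{ProDia} produces, for each pair of distinct copies $V_w^{(a)}, V_w^{(b)}$, exactly one $V_w^{(a)} \times V_w^{(b)}$ orbital per single-block orbital, by shifting the second coordinate by $nN_w = 4(b-a)$. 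Thus we obtain precisely four orbitals: one from the diagonal (giving horizontal matched edges between corresponding vertices of the two blocks), and one apiece from $o_1, o_2, o_3$ (giving respectively the pairwise crisscross within adjacent rows, the one-step-skipped crossings, and the fully reversed crossings).

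The remaining step is a bookkeeping check that the four shifted orbitals match the four pictures displayed, once the orientation convention (downward-positive along the vertical depiction of $V_w^{(a)}$ and $V_w^{(b)}$) is imposed: the diagonal orbital draws as four horizontal edges; the shift of $o_1$ pairs rows $1$-$2$ and $3$-$4$ across the blocks; the shift of $o_2$ pairs rows $1$-$3$ and $2$-$4$; and the shift of $o_3$ pairs rows $1$-$4$ and $2$-$3$. Since these four orbitals are mutually disjoint and exhaust the $16$ ordered pairs in $V_w^{(a)} \times V_w^{(b)}$, the list is complete.

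The main (minor) obstacle here is really just the orientation bookkeeping so that the shifted orbitals coincide with the four diagrams drawn; the content of the proposition is essentially an immediate corollary of Propositions \ref{ProDia} and \ref{PROKP5}, with no further computation in $C(G_0)$ required beyond what has already been done in establishing the $V_w^{(a)} \times V_w^{(a)}$ case.
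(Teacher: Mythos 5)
Your proof is correct and follows exactly the same route the paper takes: Proposition \ref{ProDia} applied to the three non-diagonal $V_w^{(a)}\times V_w^{(a)}$ orbitals from Proposition \ref{PROKP5}, together with the diagonal orbital, gives the four $V_w^{(a)}\times V_w^{(b)}$ orbitals. Your added bookkeeping on the orientation convention is extra detail that the paper leaves implicit, but the underlying argument is identical.
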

 \begin{proof}
   By Proposition \ref{ProDia}, we have three orbitals from Proposition \ref{PROKP5} as well as another coming from the diagonal relation.
 \end{proof}

 \begin{proposition}\label{PROKP7x}
   Suppose $G_0\curvearrowright X$ with $m_w,m_{x}\geq 1$. The $V_w\times V_x$ orbitals are the same as the $V_0\times V_x$ orbitals.
\end{proposition}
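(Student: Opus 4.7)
The plan is to reduce the computation of $V_w \times V_x$ orbitals to a support-pattern calculation in the direct sum decomposition $C(G_0) = \mathbb{C}^{\oplus 4} \oplus M_2(\mathbb{C})$, and then observe that $w$ and $u^{G_0}$ have \emph{identical} support patterns with respect to this decomposition, so the orbital relations governed by $w_{ij} x_{kl}$ and by $u^{G_0}_{ij} x_{kl}$ coincide.

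First I would analyze when the products vanish. From (\ref{KPMU2}), the entries of $x$ are $f_1+f_2+f_3+f_4$ on the diagonal (entirely supported on the one-dimensional summands) and $I_2$ off-diagonal (entirely supported on the $M_2(\mathbb{C})$ summand). Inspecting (\ref{EQW}), every entry of $w$ is supported either purely on the one-dimensional part (the entries $f_1+f_4$ and $f_2+f_3$) or purely on the matrix summand (the entries $E_{11}$ and $E_{22}$). Because the two summands annihilate each other, $w_{ij} x_{kl} \neq 0$ if and only if $w_{ij}$ and $x_{kl}$ are supported on the same summand, i.e., either $w_{ij} \in \{f_1+f_4,\, f_2+f_3\}$ and $x_{kl}$ is diagonal, or $w_{ij} \in \{E_{11},\,E_{22}\}$ and $x_{kl}$ is off-diagonal.

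Second, I would check that $u^{G_0}$ has exactly the same support pattern as $w$ across its $4\times 4$ grid. Reading off (\ref{KPMU}), the entries of $u^{G_0}$ at positions $(i,j)$ with $i,j \in \{1,2\}$ or $i,j \in \{3,4\}$ are $f_1+f_2,\, f_3+f_4,\, f_1+f_3,\, f_2+f_4$, all supported on the one-dimensional summands; the remaining entries are $p,\, I_2-p,\, p^T,\, I_2-p^T$, all supported on $M_2(\mathbb{C})$. This is exactly the same ``one-dim vs $M_2$'' pattern as $w$. Consequently, the non-vanishing criterion for $u^{G_0}_{ij} x_{kl}$ matches that for $w_{ij} x_{kl}$ position-by-position.

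Since the orbital equivalence $(i,k)\sim(j,l)$ on $V_w\times V_x$ (respectively $V_0\times V_x$) depends solely on which products $u_{ij}u_{kl}$ are non-zero, and since these are controlled entirely by the support patterns just identified, the $V_w\times V_x$ orbitals coincide with the $V_0\times V_x$ orbitals described in Proposition \ref{PROKP4}. There is no real obstacle; the only care required is to verify that the two support patterns of $w$ and $u^{G_0}$ genuinely agree in every cell, which is a direct inspection of the defining matrices.
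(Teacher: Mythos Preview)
Your argument is correct. The paper gives no proof beyond the blanket remark that the surrounding propositions follow from ``routine calculations based on the defining formulas (\ref{KPMU})--(\ref{KPMU2})'', so your support-pattern observation is a clean way to carry out that routine calculation: since the entries of $x$ split cleanly between the one-dimensional summands (diagonal) and the $M_2(\mathbb{C})$ summand (off-diagonal), and since $w$ and $u^{G_0}$ have identical $4\times 4$ support patterns with respect to the decomposition $C(G_0)=\mathbb{C}^{\oplus 4}\oplus M_2(\mathbb{C})$, the non-vanishing loci of $w_{ij}x_{kl}$ and $u^{G_0}_{ij}x_{kl}$ agree position-by-position, and hence the orbital partitions coincide.

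One small point worth making explicit (you touch on it at the end): matching support patterns only guarantees that cross-summand products vanish; to conclude that same-summand products are \emph{non-zero} you need that the one-dimensional entry of $x$ is $f_1+f_2+f_3+f_4$ (so multiplies any non-zero one-dimensional element to itself) and the $M_2$ entry of $x$ is $I_2$ (so multiplies any non-zero $M_2$ element to itself). You have this, so the argument is complete.
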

 \begin{proposition}\label{PROKP7y}
   Suppose $G_0\curvearrowright X$ with $m_w,m_{y}\geq 1$. There are two $V_w\times V_y$ orbitals:

\medskip

   \adjustbox{scale=0.6,center}{%
   $$\begin{tikzcd}
\bullet \arrow[rd, no head] &         & {} \arrow[ddd, no head] & \bullet \arrow[rdd, no head] &         \\
\bullet \arrow[rd, no head]          & \bullet &                         & \bullet \arrow[r, no head]            & \bullet \\
\bullet \arrow[ru, no head]          & \bullet &                         & \bullet \arrow[r, no head]            & \bullet \\
\bullet \arrow[ru, no head] &         & {}                      & \bullet \arrow[ruu, no head] &
\end{tikzcd}$$
}

\medskip

\end{proposition}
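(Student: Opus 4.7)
The plan is to follow the routine computation strategy of Propositions \ref{PROKP5}--\ref{PROKP7x}: directly enumerate which products $w_{ij} y_{kl}$ are nonzero in $C(G_0)$, and then partition the $|V_w \times V_y|=8$ pairs $(i,k)$ via $(i,k) \sim (j,l) \iff u_{ij}u_{kl} \neq 0$. Because $u$ is block-diagonal with $w$ and $y$ in distinct blocks, the only relevant products are of a $w$-entry with a $y$-entry.

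The key structural observation is that each entry of $y$ decomposes into two orthogonal pieces, $y_{kk} = (f_1+f_4) + E_{11}$ and $y_{kl} = (f_2+f_3) + E_{22}$ for $k \neq l$, while each entry of $w$ is purely of one of the four types $f_1+f_4$, $f_2+f_3$, $E_{11}$, or $E_{22}$. Using the multiplication in $\mathbb{C}^4 \oplus M_2(\mathbb{C})$ (namely $f_if_j=\delta_{ij}f_i$, $E_{ij}E_{kl}=\delta_{jk}E_{il}$, and annihilation between the two summands), one gets a short table saying when $w_{ij} y_{kl}$ is nonzero: for instance $(f_1+f_4)\cdot y_{kl} \neq 0$ iff $k=l$, $E_{22}\cdot y_{kl}\neq 0$ iff $k\neq l$, and so on. In effect, a $w$-position contributing to the scalar summand pairs with the diagonal of $y$, and a $w$-position contributing to $M_2$ pairs with the off-diagonal of $y$ (and vice versa).

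From this table one reads off that the orbital of $(1,1)$ is $\{(1,1),(2,2),(3,1),(4,2)\}$ and the orbital of $(1,2)$ is $\{(1,2),(2,1),(3,2),(4,1)\}$, together exhausting $V_w \times V_y$. These match exactly the two pictures in the statement, yielding the claimed two orbitals.

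The only real obstacle is bookkeeping: correctly tabulating the $4 \times 2$ products in local coordinates and matching the resulting equivalence classes against the drawn pictures. No new conceptual ingredient is needed beyond those already employed in Propositions \ref{PROKP5} and \ref{PROKP6}.
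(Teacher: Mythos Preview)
Your approach is correct and is exactly what the paper does: the paper gives no explicit proof here, stating only that these propositions are ``proven by routine calculations based on the defining formulas (\ref{KPMU})--(\ref{KPMU2}),'' and you carry out precisely that calculation, arriving at the correct two orbitals $\{(1,1),(2,2),(3,1),(4,2)\}$ and $\{(1,2),(2,1),(3,2),(4,1)\}$. One minor slip worth fixing: your summary sentence ``a $w$-position contributing to $M_2$ pairs with the off-diagonal of $y$'' is not accurate (for instance $E_{11}\cdot y_{11}=E_{11}\neq 0$ while $E_{11}\cdot y_{12}=E_{11}E_{22}=0$, so $E_{11}$ pairs with the \emph{diagonal} of $y$), but since your explicitly listed orbitals are correct this does not affect the proof.
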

 \begin{proposition}\label{PROKP7z}
   Suppose $G_0\curvearrowright X$ with $m_w,m_{z}\geq 1$. There are two $V_w\times V_z$ orbitals:

\medskip

   \adjustbox{scale=0.6,center}{%
   $$\begin{tikzcd}
\bullet \arrow[rd, no head] &         & {} \arrow[ddd, no head] & \bullet \arrow[rdd, no head] &         \\
\bullet \arrow[rd, no head]          & \bullet &                         & \bullet \arrow[r, no head]            & \bullet \\
\bullet \arrow[r, no head]           & \bullet &                         & \bullet \arrow[ru, no head]           & \bullet \\
\bullet \arrow[ruu, no head]         &         & {}                      & \bullet \arrow[ru, no head]           &
\end{tikzcd}$$
}

\medskip

\end{proposition}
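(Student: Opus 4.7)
The plan is a direct orbital computation, entirely parallel to the proofs flagged as routine for Propositions \ref{KP3}, \ref{PROKP4}, \ref{PROKP5}, \ref{PROKP6}, \ref{PROKP7x}, and \ref{PROKP7y}. Using the explicit formulas (\ref{EQW}) for $w$ and (\ref{KPMU2}) for $z$, I will determine exactly which products $w_{ij}z_{kl}$ in $C(G_0)$ are nonzero, then collect the pairs $(i,k)\sim(j,l)$ so identified into equivalence classes, and finally match these against the two figures in the statement.

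First I would record the basic products in $C(G_0)=\mathbb{C}^4\oplus M_2(\mathbb{C})$ from the rules $f_a f_b=\delta_{ab}f_a$, $E_{ij}E_{kl}=\delta_{jk}E_{il}$, and $f_aE_{jk}=0$. Every entry of $w$ is one of the four elements $f_1+f_4,\,f_2+f_3,\,E_{11},\,E_{22}$, and every entry of $z$ is one of the two elements $f_1+f_4+E_{22},\,f_2+f_3+E_{11}$, so only eight ``type products'' need to be tabulated. A direct check shows that $(f_1+f_4)(f_1+f_4+E_{22})=f_1+f_4$, $(f_2+f_3)(f_2+f_3+E_{11})=f_2+f_3$, $E_{11}(f_2+f_3+E_{11})=E_{11}$, and $E_{22}(f_1+f_4+E_{22})=E_{22}$ are nonzero, while the other four cross products vanish because of the orthogonality of the $f_a$ and the annihilation between the abelian and matrix summands.

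Next, label the $V_w$ block by $\{1,2,3,4\}$ and the $V_z$ block by $\{5,6\}$ following the convention of Proposition \ref{ProKPB} and the vertical-line orientation fixed just before Proposition \ref{ProKP1}. For each of the eight pairs $(i,k)\in V_w\times V_z$ I would scan $j\in V_w$ and $l\in V_z$, using the compatibility table above to record when $w_{ij}z_{k-4,\,l-4}\neq 0$. A short case analysis yields exactly two equivalence classes,
\[o_1=\{(1,5),(4,5),(2,6),(3,6)\},\qquad o_2=\{(1,6),(2,5),(3,5),(4,6)\},\]
which partition $V_w\times V_z$. Drawing $o_1$ and $o_2$ as bipartite relations from the top-to-bottom $V_w$ column to the top-to-bottom $V_z$ column reproduces exactly the left and right figures displayed in the statement.

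The main obstacle is purely bookkeeping: one must verify that these two sets are closed under the orbital relation (no further splitting or merging) and together exhaust the eight pairs. With only $|V_w\times V_z|=8$ starting pairs and the compatibility rules essentially determined by two ``parities'' (whether an entry of $w$ is of $f$-type or $E$-type, and whether the $z$-entry is diagonal or off-diagonal), there is no conceptual difficulty, matching the author's earlier remark that such verifications are routine calculations based on the defining formulas.
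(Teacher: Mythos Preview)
Your proposal is correct and follows exactly the approach the paper indicates: the authors state that Propositions \ref{KP3}--\ref{PROKP8} ``are proven by routine calculations based on the defining formulas (\ref{KPMU})--(\ref{KPMU2})'' and give no further detail, so your explicit tabulation of the eight type-products $w_{ij}z_{kl}$ and the resulting two orbitals $o_1=\{(1,5),(4,5),(2,6),(3,6)\}$, $o_2=\{(1,6),(2,5),(3,5),(4,6)\}$ is precisely the intended routine verification. Your reading of the two tikz figures against these sets is also correct.
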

\begin{proposition}\label{PROKP8}
Let $r\in\{x,y,z\}$. Suppose $G_0\curvearrowright X$ with $m_r\geq 2$. The $V_r^{(a)}\times V_r^{(b)}$ orbitals are:

\medskip

    \adjustbox{scale=0.6,center}{%
   $$ \begin{tikzcd}
\bullet \arrow[r,no head] & \bullet & {} \arrow[d,no head] & \bullet \arrow[rd,no head] & \bullet \\
\bullet \arrow[r,no head] & \bullet & {}           & \bullet \arrow[ru,no head] & \bullet
\end{tikzcd}$$
}

\medskip

\end{proposition}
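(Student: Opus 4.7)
The plan is to reduce the $V_r^{(a)}\times V_r^{(b)}$ calculation to the single-block case via Proposition \ref{ProDia}, and then read off the orbitals inside $V_r^{(a)}\times V_r^{(a)}$ directly from the explicit formulas (\ref{KPMU})--(\ref{KPMU2}). The key structural observation is that each of $x,y,z$ is a $2\times 2$ magic matrix of the special shape
$$r=\begin{pmatrix} a & b \\ b & a\end{pmatrix},$$
where $a,b\in C(G_0)=\mathbb{C}^4\oplus M_2(\mathbb{C})$ are complementary projections supported on orthogonal summands of the algebra: either $a=f_1+f_2+f_3+f_4$ and $b=I_2$ (case $r=x$), or the four $f_i$ are split $2$--$2$ between $a$ and $b$ and the orthogonal rank-one projections $E_{11},E_{22}$ are split between $a$ and $b$ (cases $r=y,z$). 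In every case one has $ab=0$ while $a^2=a\neq 0$ and $b^2=b\neq 0$, by the orthogonality relations $f_if_j=\delta_{ij}f_i$, $f_iE_{kl}=0$, and $E_{11}E_{22}=0$.

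The next step is to enumerate the sixteen products $r_{ij}r_{kl}$ for $(i,j),(k,l)\in\{1,2\}^2$. Thanks to the above, such a product is nonzero precisely when $r_{ij}=r_{kl}$, so the orbital equivalence relation on $V_r\times V_r$ (including the diagonal) has exactly two classes, namely
$$\{(1,1),(2,2)\}\quad\text{and}\quad\{(1,2),(2,1)\}.$$

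Finally, I would invoke Proposition \ref{ProDia} with $N_p=2$ and $n$ equal to the block separation between $V_r^{(a)}$ and $V_r^{(b)}$: each of the two orbitals above (the diagonal one included) lifts to a single $V_r^{(a)}\times V_r^{(b)}$ orbital by shifting the second coordinate. The shifted diagonal yields $\{(i^{(a)},i^{(b)}),(i^{(a)}+1,i^{(b)}+1)\}$, drawn as two parallel horizontal edges, while the shifted off-diagonal yields $\{(i^{(a)},i^{(b)}+1),(i^{(a)}+1,i^{(b)})\}$, drawn as two crossing edges---which is exactly the two-picture statement of the proposition.

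The only potential obstacle is the orthogonality check in the first step, but this is entirely routine case-by-case multiplication in $\mathbb{C}^4\oplus M_2(\mathbb{C})$, requiring nothing beyond the algebra structure recorded after (\ref{KP1}). Thus no new computations are needed beyond those already used in the proof of Proposition \ref{ProKPB}.
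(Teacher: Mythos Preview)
Your proof is correct and follows essentially the same approach the paper takes throughout this subsection: compute the single-block orbitals by checking which products $r_{ij}r_{kl}$ are nonzero, then invoke Proposition~\ref{ProDia} to pass to $V_r^{(a)}\times V_r^{(b)}$ (exactly as the paper does for the $w$-blocks in Proposition~\ref{PROKP6}). The paper leaves Proposition~\ref{PROKP8} itself to ``routine calculations based on the defining formulas,'' and your argument is a clean execution of precisely that; your observation that $x,y,z$ all share the shape $\begin{pmatrix}a&b\\b&a\end{pmatrix}$ with $ab=0$ is a tidy way to handle the three cases uniformly rather than one at a time.
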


\subsection{The Frucht property}

\begin{theorem}\label{KPF}
The Kac--Paljutkin quantum group does not have the Frucht property.
\end{theorem}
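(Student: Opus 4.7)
The plan is to follow the five-step recipe of Section \ref{SEC5}: assuming $G_0 \curvearrowright X$, I would exhibit a single permutation $\sigma \in S_V$ that fixes every orbital of $G_0 \subset S_N^+$ yet lies outside $G_{0,\text{class}}$; Theorem \ref{orbitalgraph3} then yields $G_0 \subsetneq G^+(X)$, and since $X$ is arbitrary the Kac--Paljutkin quantum group lacks the Frucht property. The leverage point is Proposition \ref{ProKPB}: any embedding $G_0 \subset S_N^+$ must contain at least one block of type $u^{G_0}$.

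A preliminary step is to pin down $G_{0,\text{class}}$. The four one-dimensional summands $\mathbb{C}f_i$ of $C(G_0)$ give four characters $\varepsilon_1,\ldots,\varepsilon_4$, and evaluating them on the entries of (\ref{KPMU}) yields the permutation matrices of $e$, $(3\,4)$, $(1\,2)$, $(1\,2)(3\,4)$ respectively. Thus $G_{0,\text{class}} \cong \mathbb{Z}_2\times \mathbb{Z}_2$, and its restriction to any $V_0^{(m)}$-block is the Klein four-group $\{e,(1\,2),(3\,4),(1\,2)(3\,4)\}$, which does not contain $(1\,3)(2\,4)$.

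With $u$ block-diagonalised as in Proposition \ref{ProKPB}, I would define $\sigma \in S_V$ block-wise: act by $(1\,3)(2\,4)$ on every $V_0^{(m)}$- and every $V_w^{(m)}$-block (each relabelled as $\{1,2,3,4\}$ in the clockwise convention introduced just before Proposition \ref{ProKP1}), by the transposition $(1\,2)$ on every $V_x^{(m)}$- and $V_z^{(m)}$-block, and by the identity on every $V_y^{(m)}$- and $V_1^{(m)}$-block. Orbital invariance of $\sigma$ is then a case check against Propositions \ref{ProKP1}--\ref{PROKP8}: within-block orbitals and the between-block relations already declared total are automatic; the between-block orbitals of types $V_0^{(a)}\times V_0^{(b)}$, $V_w^{(a)}\times V_w^{(b)}$, and $V_r^{(a)}\times V_r^{(b)}$ for $r\in\{x,y,z\}$ (Propositions \ref{PROKP2}, \ref{PROKP6}, \ref{PROKP8}) are preserved because the same block permutation is applied on both sides; and the remaining mixed between-block orbitals (Propositions \ref{KP3}, \ref{PROKP4}, \ref{PROKP7x}, \ref{PROKP7y}, \ref{PROKP7z}) are each bipartite on both sides, with the prescribed block permutations chosen precisely so as to stabilise or swap the two parts compatibly. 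For instance, the two $V_0\times V_w$ orbitals are $\{1,2\}\times\{1,2\}\cup\{3,4\}\times\{3,4\}$ and its complement, each invariant under simultaneous $(1\,3)(2\,4)$ on both sides; the $V_w\times V_y$ orbitals are bipartite with $V_w$-parts $\{1,3\}\sqcup\{2,4\}$ (which $(1\,3)(2\,4)$ stabilises), while the $V_w\times V_z$ orbitals are bipartite with $V_w$-parts $\{1,4\}\sqcup\{2,3\}$ (which $(1\,3)(2\,4)$ swaps, forcing the compensating transposition on $V_z$).

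Since at least one $V_0^{(m)}$-block is present and $\sigma$ acts on it as $(1\,3)(2\,4) \notin G_{0,\text{class}}|_{V_0}$, we conclude $\sigma \notin G_{0,\text{class}}$, and Theorem \ref{orbitalgraph3} completes the argument. The main obstacle is not conceptual but combinatorial bookkeeping: the mixed between-block orbital check involves several cases, with the subtlety that $V_y$ and $V_z$ demand different block actions despite playing superficially similar roles; however, because every orbital has been explicitly tabulated in the cited propositions, the verification reduces to a finite and routine inspection.
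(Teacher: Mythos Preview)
Your proposal is correct and follows essentially the same approach as the paper's proof. The paper constructs the full dihedral group $D_4=\langle\sigma_{(b_1,b_2,b_3)}\rangle$ of orbital-preserving permutations and invokes $|D_4|>|G_{0,\text{class}}|$, whereas you single out one element—precisely $\sigma_{(1,1,1)}$ in the paper's notation—and observe that its restriction $(1\,3)(2\,4)$ to any $V_0$-block already lies outside the Klein four-group $G_{0,\text{class}}|_{V_0}=\{e,(1\,2),(3\,4),(1\,2)(3\,4)\}$; this streamlining is sound and the case-by-case orbital verification (including the asymmetric treatment of $V_y$ versus $V_z$ forced by Propositions~\ref{PROKP7y} and~\ref{PROKP7z}) matches the paper's checks exactly.
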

\begin{proof}
Assume that $G_0\curvearrowright X$. Consider the following copy of $D_4$ in $S_4$:
\begin{equation}D_4=\{e,(12),(34),(12)(34),(13)(24),(14)(23),(1324),(1423)\}.\label{D4perm}\end{equation}
Where $s=(14)(23)$, each $\omega\in D_4$ is of the form, for some $b_i\in \{0,1\}$,
$$\omega_{(b_1,b_2,b_3)}=(34)^{b_3}(12)^{b_2}s^{b_1}.$$
Let $\omega_{(b_1,b_2,b_3)}\in D_4$ and define a permutation $\sigma_{(b_1,b_2,b_3)}$ of $V$ as follows:
\begin{align}
  V_{0}\ni i+l & \quad\longmapsto\quad i+\omega_{(b_1,b_2,b_3)}(l+1)-1.\nonumber \\
  V_{w}\ni i+l & \quad\longmapsto\quad i+[(13)(24)]^{b_1}(l+1)-1 \label{V4blo} \\
  V_{x}\ni j+l & \quad\longmapsto\quad j+(l+b_1\mod 2)\label{V2blo} \\
  V_{y}^{m}\ni j+l&\quad\longmapsto\quad j+l\\
  V_{z}\ni j+l & \quad\longmapsto\quad j+(l+b_1\mod 2)\label{Vzblo} \\
  V_{1}\ni k & \quad\longmapsto\quad k\nonumber
\end{align}
More succinctly:
\begin{enumerate}
  \item $V_0$ blocks take a full $D_4$ action.
  \item $V_w$ blocks only take a $\mathbb{Z}_2$ action.
  \item $V_x$ and $V_y$ blocks take a $\mathbb{Z}_2$ action.
  \item $V_y$ and $V_1$ blocks are fixed.
\end{enumerate}
Note in all cases that $\sigma_{(b_1,b_2,b_3)}(V_p)=V_p$.

\bigskip

We are going to show that each $\sigma_{(b_1,b_2,b_3)}$ leaves the orbitals invariant, and therefore by Theorem \ref{orbitalgraph3}, they are each automorphisms of $X$.

\bigskip

We do not have to worry about the 15 orbitals that yield total edge relations: any $V_p^{(m)}$-invariant permutation of $V$ will leave these edges invariant.

\bigskip

By Proposition \ref{PROPCheck1} we can check that $\sigma_{(b_1,b_2,b_3)}$ leaves invariant all but one orbital to show that it leaves all orbitals invariant.
\begin{enumerate}
  \item $V_0^{(a)}\times V_{0}^{(a)}$: one of two orbitals is $\Box$, which admits a $D_4$ action. We just note that the permutations given by (\ref{D4perm}) are the correct ones.
  \item $V_0^{(a)}\times V_{0}^{(b)}$: the two orbitals are given in Proposition \ref{PROKP2}. By Proposition \ref{PROPCheck1} we need only check the first two orbitals, and a quick inspection shows they are invariant under $\sigma_{(b_1,b_2,b_3)}$.
  \item $V_0\times V_w=\{i,i+1,i+2,i+3\}\times\{j,j+1,j+2,j+3\}$: for the first orbital the relation between $\{i,i+1\}$ and $\{j,j+1\}$, and the relation between $\{i+2,i+3\}$ and $\{j+2,j+3\}$ are total. Therefore permutations that leave these four subsets invariant will leave the orbital invariant. If $b_1=0$ then
  $$\{i,i+1\},\,\{i+2,i+3\},\,\{j,j+1\},\,\{j+2,j+3\}$$ are all invariant for  $\sigma_{(0,b_2,b_3)}$. If $b_1=1$, then
  $$\{i,i+1\}\longleftrightarrow \{i+2,i+3\},$$
  and, by (\ref{V4blo}), we also have
  $$\{j,j+1\}\longleftrightarrow \{j+2,j+3\},$$
  and it follows that the first orbital (and therefore both orbitals) are invariant for all $\sigma_{(b_1,b_2,b_3)}$.
  \item $V_0\times V_x=\{i,i+1,i+2,i+3\}\times\{j,j+1\}$: note that by Proposition \ref{PROKP4}, for the first orbital, the edge relations between $\{i,i+1\}$ and $\{j\}$, and between $\{i+2,i+3\}$ and $\{j+1\}$ are total. If $b_1=0$, then $\{i,i+1\}$ and $\{i+2,i+3\}$ are invariant, but $V_x$ is fixed and so the orbitals are invariant. By (\ref{V2blo}), if $b_1=1$, then $\{j\}\longleftrightarrow \{j+1\}$ switches with $\{i,i+1\}\longleftrightarrow \{i+2,i+3\}$, and the therefore the first orbital is invariant (and therefore all are invariant).
  \item $V_w^{(a)}\times V_{w}^{(a)}=\{i,i+1,i+2,i+3\}\times\{i,i+1,i+2,i+3\}$: there are only two possible actions: the $b_1=0$ identity action, that leaves everything invariant, and the $b_1=1$ action (cyclic notation):
      $$(i,i+2)(i+1,i+3),$$
      which leaves invariant the first two orbitals in Proposition \ref{PROKP5}, and hence all such orbitals.
  \item $V_w^{(a)}\times V_w^{(b)}$: similarly to (5), the action is either the identity or an involution, both leaving all orbitals invariant.
  \item $V_w\times V_x$: looking at the first orbital, and $V_w$ and $V_x$ either both fixed, or both taking an involution, we see that the orbitals are left invariant.
  \item $V_w\times V_y=\{i,i+1,i+2,i+3\}\times\{j,j+1\}$: the first of two orbitals is
  $$o_1=\{(i,j),(i+1,j+1),(i+2,j),(i+3,j+1)\}.$$
   If $b_1=0$ everything is fixed and $o_1$ invariant. If $b_1=1$ the induced permutation $\sigma_{1,b_2,b_3}'$ maps:
   $$\sigma_{1,b_2,b_3}'(o_1)=\{(i+2,j),(i+3,j+1),(i,j),(i+1,j+1)\}=o_1.$$
  \item $V_w\times V_z=\{i,i+1,i+2,i+3\}\times \{j,j+1\}$: if $b_1=0$ everything is fixed, and any orbital invariant. If $b_1=1$, $\{i,i+1\}\longleftrightarrow\{i+2,i+3\}$, and (transposition) $(j,j+1)$, preserving the first and hence all orbitals.
  \item $V_r^{(a)}\times V_r^{(b)}$: for $r\in\{x,y,z\}$ immediate as the actions on equivalent $V_r$ are the same.
\end{enumerate}

\bigskip

Finally because $u^{G_0}$ must appear in the fundamental magic representation, and because such blocks must have $D_4$ symmetry, each $\sigma_{(b_1,b_2,b_3)}$ is a distinct (classical) automorphism of $X$. However $G_{0,\text{class}}=\mathbb{Z}_2\times\mathbb{Z}_2$ and we have shown that:
$$ D_4\not\subset G_{0,\text{class}}.$$
The result follows from Theorem \ref{orbitalgraph3}.
\end{proof}
\section{Intermediate Embeddings}\label{SEC8}
In this section we consider the situation where an intermediate embedding
$$G\subsetneq G'\subset S_N^+,$$
can give an implication:
$$G\curvearrowright X\implies G'\curvearrowright X,$$
that allows us conclude that $G$ does not have the Frucht property. In this first proposition we see that the totality property of a group dual reminds of freeness.

\bigskip

We recall that if $\widehat{\Gamma}\curvearrowright X$, then $\Gamma_u=\{g_1,\dots,g_k\}$ is the set of elements of $\Gamma$ whose Fourier-type transitive magic representations appear in the fundamental magic representation.
\begin{proposition}
  If a non-abelian total discrete group dual $\widehat{\Gamma}\curvearrowright X$, then the dual of a free product
 $$\widehat{\mathbb Z_{N_1}*\ldots*\mathbb Z_{N_k}}\curvearrowright X.$$
Therefore no finite non-abelian total group dual has the Frucht property.
\end{proposition}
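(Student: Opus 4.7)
The plan is to transport the magic unitary of $\widehat{\Gamma}\curvearrowright X$ to one for $\widehat{\Gamma_f}$, where $\Gamma_f=\mathbb{Z}_{N_1}*\cdots*\mathbb{Z}_{N_k}$, and check that it still commutes with $d$ via the orbital characterisation of Proposition \ref{Bichobs}. Write $\Gamma_u=\{g_1,\ldots,g_k\}$ with $\operatorname{ord}(g_p)=N_p$, and let $h_p\in\Gamma_f$ be the generator of the $p$-th factor. Since $\Gamma_f\twoheadrightarrow \Gamma$ sending $h_p\to g_p$ gives $\widehat{\Gamma}\subset\widehat{\Gamma_f}$, the natural candidate for the fundamental magic representation of $\widehat{\Gamma_f}\subset S_N^+$ is the block-diagonal matrix $U$ obtained by replacing, block for block and copy for copy, each $u^{g_p}$ appearing in $u$ by its Fourier analogue $U^{h_p}$ given by (\ref{fouriermagi}).

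The key calculation is to compare the non-vanishing patterns of $u_{ij}u_{kl}$ and $U_{ij}U_{kl}$. In the intra-type case, where both pairs $(i,j)$ and $(k,l)$ lie in blocks of the same type $V_p$, a direct Fourier computation shows
$$U^{h_p}_{ij}U^{h_p}_{kl}\neq 0\iff (i-j)\equiv (k-l)\pmod{N_p},$$
and the same formula holds for $u^{g_p}_{ij}u^{g_p}_{kl}$ since $g_p$ still has order exactly $N_p$. Thus the Bichon condition transfers verbatim from $u$ to $U$ on these pairs, because $\widehat{\Gamma}\curvearrowright X$. In the inter-type case $p\neq q$, freeness of $\Gamma_f$ makes all products $h_p^m h_q^n$ distinct, so $U^{h_p}_{ij}U^{h_q}_{kl}$ never vanishes; one then invokes totality of $\widehat{\Gamma}$ via Proposition \ref{totalprop}, which guarantees that the edge relation between inequivalent blocks $V_p,V_q$ is total, so the requirement that $(i,k)$ and $(j,l)$ lie both in $E$ or both in $E^c$ is satisfied automatically. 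Combining the two cases, Proposition \ref{Bichobs} gives $dU=Ud$, i.e. $\widehat{\Gamma_f}\curvearrowright X$.

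For the Frucht consequence, suppose for contradiction that $\widehat{\Gamma}$ is finite, non-abelian, total, and $G^+(X)=\widehat{\Gamma}$ for some finite graph $X$. Non-abelianness of $\Gamma$ forces the generating set $\Gamma_u$ to contain at least two elements of order at least $2$, so $\Gamma_f=\mathbb{Z}_{N_1}*\cdots*\mathbb{Z}_{N_k}$ is infinite and hence $C^*(\Gamma_f)$ is infinite-dimensional. From $\widehat{\Gamma}\curvearrowright X$ the first part yields $\widehat{\Gamma_f}\curvearrowright X$, hence $\widehat{\Gamma_f}\subset G^+(X)=\widehat{\Gamma}$, which is impossible as $C(\widehat{\Gamma})$ is finite-dimensional.

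The main obstacle is the inter-type case: it is tempting to argue purely algebraically inside $C^*(\Gamma_f)$ that $U_{ij}U_{kl}\neq 0$ forces $(i,k),(j,l)$ to lie in the same orbital, but since we have switched quantum groups we no longer have a direct handle on the graph; totality of the \emph{original} $\widehat{\Gamma}$ is exactly what rescues us, because it flattens the edge pattern between inequivalent blocks so that the Bichon obstruction cannot fail on those pairs regardless of what $U^{h_p}_{ij}U^{h_q}_{kl}$ looks like. The intra-type matching of Fourier coefficients is routine, but it is worth verifying it formally to ensure that equivalent copies of the same $u^{g_p}$ in $u$ really translate to the same pattern under $U^{h_p}$, using the fact (already used in Theorem \ref{total} and Proposition \ref{ProDia}) that multiplicity does not introduce new relations beyond cyclic shifts within $V_p$.
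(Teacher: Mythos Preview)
Your argument is correct and follows essentially the same route as the paper: replace each Fourier block $u^{g_p}$ by its free-product analogue $u^{h_p}$, then verify that the non-vanishing pattern of the new magic unitary matches the old one (circulant structure handles same-type blocks, freeness/totality handles different-type blocks), and conclude via the Bichon characterisation. The only cosmetic difference is that the paper phrases the verification in terms of matching orbitals and invokes Theorem~\ref{orbitalgraph2}, whereas you invoke Proposition~\ref{Bichobs} directly; and you spell out the Frucht contradiction (infinite $\widehat{\Gamma_f}$ cannot embed in finite $\widehat{\Gamma}$) more explicitly than the paper does.
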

\begin{proof}
Suppose that $\Gamma$ is non-abelian and total $\widehat{\Gamma}\curvearrowright X$ by
$$u=\operatorname{diag}(u^{g_1},\dots,u^{g_1},\dots,u^{g_k},\dots,u^{g_k}),$$
with $g_1,\dots,g_k$ of order $N_1,\dots,N_k$. Consider the free product $\Gamma':=\mathbb{Z}_{N_1}*\ldots*\mathbb Z_{N_k}$ with respective generators $h_1,\dots,h_k$.
Consider a fundamental magic representation $u'$ of $C(\widehat{\Gamma'})$ where each instance of $u^{g_i}$ is replaced with $u^{h_i}$:
$$u'=\operatorname{diag}(u^{h_1},\dots,u^{h_1},\dots,u^{h_k},\dots,u^{h_k}).$$
Note that as $\Gamma'$ is freely generated, its dual is total, and so the orbitals between blocks of inequivalent type are full $V_p\times V_q$.  The circulant nature of the $u^{h_i}$ implies that the orbitals both within and between blocks  of equivalent type are the same as $\widehat{\Gamma}\curvearrowright X$. Hence by Theorem \ref{orbitalgraph2}, we have $\widehat{\Gamma'}\curvearrowright X$, and therefore finite $\widehat{\Gamma}$ does not have the Frucht property.
\end{proof}

This result appears to render section 6 obsolete, but in the absence of an intermediate embedding Theorem \ref{orbitalgraph3} might still be needed to show that a quantum permutation group does not have the Frucht property. Here is a partial result for the Kac--Paljutkin quantum group.

\begin{proposition}
If $G_0\curvearrowright X$ with only $u^{G_0}$, $x$, and $1_{G_0}$ appearing, then $H_2^+\curvearrowright X$.
\end{proposition}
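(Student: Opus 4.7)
The plan is to construct an explicit magic unitary $v\in M_N(C(H_2^+))$ satisfying $dv=vd$, thereby exhibiting $H_2^+\curvearrowright X$. The key structural observation is that both $u^{H_2^+}$ and $u^{G_0}$ have entries satisfying $u_{ij}=u_{\sigma(i),\sigma(j)}$ for $\sigma=(12)(34)$, reflecting the inclusion $G_0\subset H_2^+\subset S_4^+$ via a surjection $\pi:C(H_2^+)\to C(G_0)$ sending the standard generators $p_{ij},q_{ij}$ of $C(H_2^+)$ to the corresponding entries of $u^{G_0}$. Similarly, the 2-block transitive representation $x$ of $C(G_0)$ admits an $H_2^+$-lift built out of sums of generators.

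Concretely, set $P:=p_{11}+q_{11}\in C(H_2^+)$; this is a projection (as a sum of orthogonal projections from a row of $u^{H_2^+}$), and $\pi(P)=f_1+f_2+f_3+f_4$, so that
$$x':=\begin{pmatrix} P & 1-P \\ 1-P & P\end{pmatrix}$$
is a magic unitary in $M_2(C(H_2^+))$ with $\pi(x')=x$. Define $v\in M_N(C(H_2^+))$ to be block diagonal, placing a copy of $u^{H_2^+}$ on each $V_0^{(m)}$ block, a copy of $x'$ on each $V_x^{(m)}$ block, and a copy of $1\in C(H_2^+)$ on each $V_1^{(m)}$ block. Then $v$ is a magic unitary, and since $m_0\geq 1$ at least one block equals $u^{H_2^+}$ whose entries generate $C(H_2^+)$, so $v$ yields an embedding $H_2^+\subset S_N^+$.

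To verify $dv=vd$, by Proposition \ref{Bichobs} it suffices to check that every $H_2^+$-orbital under $v$ lies entirely in $E$ or entirely in $E^c$; since $X$ is $G_0$-invariant, it is enough to show that every $H_2^+$-orbital is contained in a $G_0$-orbital. For block pairs with total edge relations ($V_\bullet\times V_1$, $V_1^{(a)}\times V_1^{(b)}$) this is automatic. For the remaining non-total block-pair types --- $V_0^{(a)}\times V_0^{(a)}$, $V_0^{(a)}\times V_0^{(b)}$, $V_0\times V_x$, $V_x^{(a)}\times V_x^{(a)}$, and $V_x^{(a)}\times V_x^{(b)}$ --- one computes the products $v_{ij}v_{kl}$ in $C(H_2^+)$ using the orthogonality of the projections in each row and column of $u^{H_2^+}$, and checks that the pattern of non-vanishing products reproduces exactly the $G_0$-orbitals computed in Propositions \ref{ProKP1}, \ref{PROKP2}, \ref{PROKP4}, \ref{PROKP8}. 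The main obstacle is this case-by-case orbital matching, especially for the mixed $V_0\times V_x$ type where one must expand $u^{H_2^+}_{ij}\cdot x'_{kl}$ and use that $p_{ij}P=p_{i1}$ when $j=1$ and vanishes otherwise, etc.; the individual computations are elementary but must be done carefully to ensure no spurious $H_2^+$-identifications crossing $G_0$-orbital boundaries arise.
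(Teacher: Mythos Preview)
Your approach is essentially identical to the paper's: you replace each $G_0$-block by the corresponding $H_2^+$-block (your $x'$ coincides with the paper's, since the row and column relations in $u^{H_2^+}$ give $1-P=p_{12}+q_{12}=p_{21}+q_{21}$ and $P=p_{22}+q_{22}$), and then match the orbitals block-pair by block-pair. One small omission: you assert that $v$ yields an embedding $H_2^+\subset S_N^+$, but for this you need $x'$ to be a \emph{corepresentation}, not merely a magic unitary; this does not follow from $\pi(x')=x$, and the paper explicitly notes that a direct calculation is required (it amounts to checking $\Delta(P)=P\otimes P+(1-P)\otimes(1-P)$, which holds by expanding $\Delta(p_{11}+q_{11})$). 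Also, $V_x^{(a)}\times V_x^{(a)}$ is in fact total, so it need not appear in your list of cases to check.
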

\begin{proof}
Suppose that $G_0\curvearrowright X$ with
$$u=\operatorname{diag}(u^{G_0},\dots,u^{G_0},x,\dots,x,1_{G_0},\dots,1_{G_0}).$$
The task is to replace each of these transitive magic representations with transitive magic representations of $C(H_2^+)$, and show that this new fundamental magic representation $u'$ of $C(H_2^+)$ has the same orbitals as $u$.

\bigskip

We can replace the trivial representations $1_{G_0}$ with trivial representations $1_{H_2^+}$. Recall the fundamental magic representation of $C(H_2^+)$:
$$u^{H_2^+}=\begin{pmatrix}
              p_{11} & q_{11} & p_{12} & q_{12} \\
              q_{11} & p_{11} & q_{12} & p_{12} \\
              p_{21} & q_{21} & p_{22} & q_{22} \\
              q_{21} & p_{21} & q_{22} & p_{22}
            \end{pmatrix}.$$

 With regard to the transitive magic representation $x\in M_2(C(G_0))$, consider
$$x'=\begin{pmatrix}
    p_{11}+q_{11} & p_{12}+q_{12} \\
    p_{21}+q_{21} & p_{22}+q_{22}
  \end{pmatrix}.$$
By comparing column one and row four of $u^{H_2^+}$, we see that this is a magic unitary, and  direct calculation shows it is a representation. Consider therefore
$$u'=\operatorname{diag}(u^{H_2^+},\dots,u^{H_2^+},x',\dots,x',1_{H_2^+},\dots,1_{H_2^+}).$$

\bigskip

With the  assumption on $u$, there are only nine $G\curvearrowright X$ orbitals to check, five of which are total, and easily seen to be total for $u'$ also:
$$V_0\times V_1,\,V_x^{(a)}\times V_x^{(a)},\,V_x\times V_1,\,V_1^{(a)}\times V_1^{(a)},\,V_1^{(a)}\times V_1^{(b)}.$$
There are four orbitals to check, namely:
$$V_0^{(a)}\times V_0^{(a)},\,V_0^{(a)}\times V_{0}^{(b)},\,V_0\times V_x,\,V_x^{(a)}\times V_x^{(b)}.$$
We know from our illustration of Theorem \ref{orbitalgraph3} that the $V_0^{(a)}\times V_0^{(a)}$ orbitals of $u'$ are the same as of $u$. Furthermore by Proposition \ref{ProDia}, the $V_0^{(a)}\times V_0^{(b)}$ orbitals of $u'$ are the same as of $u$.

\bigskip

By looking at $u^{H_2^+}_{ij}x'_{kl}\neq 0$, we find that the $V_0\times V_x$ orbitals are the same as those given in Proposition \ref{PROKP4}, and finally the $V_x^{(a)}\times V_x^{(b)}$ orbitals of $u$ are also orbitals of $u'$.
\end{proof}
As well as analogues of the two dimensional $y,z\in M_2(C(G_0))$, what is missing from a full $G_0\subsetneq H_2^+\subset S_N^+$ intermediate embedding result for $G_0$ is a  transitive magic representation of $C(H_2^+)$ of the form:
$$w'=\begin{pmatrix}
       p_a & p_b & p_c & p_d \\
       p_b & p_a & p_d & p_c \\
       p_c & p_d & p_a & p_b \\
       p_d & p_c & p_b & p_a
     \end{pmatrix}.$$

Consider a type-$w$ block $V_w^{(a)}$ of $G_0\curvearrowright X$.  There are three $V_w^{(a)}\times V_w^{(a)}$ orbitals, and, by Theorem \ref{orbitalgraph2}, this gives eight possibilities for the induced subgraph $X(V_w^{(a)})$:

\medskip

\adjustbox{scale=0.6,center}{%
$$\begin{tikzcd}
\bullet                                & \bullet           & {} \arrow[dddd,no head] & \bullet \arrow[r,no head]            & \bullet                      & {} \arrow[dddd,no head] & \bullet \arrow[rd,no head]          & \bullet           & {} \arrow[dddd,no head] & \bullet \arrow[d,no head]            & \bullet \arrow[d,no head]  \\
\bullet                                & \bullet           &                 & \bullet \arrow[r,no head]            & \bullet                      &                 & \bullet \arrow[ru,no head]          & \bullet           &                 & \bullet                      & \bullet            \\
{} \arrow[rrrrrrrrrr,no head]                  &                   &                 &                              &                              &                 &                             &                   &                 &                              & {}                 \\
\bullet \arrow[d,no head] \arrow[r,no head] \arrow[rd,no head] & \bullet \arrow[d,no head] &                 & \bullet \arrow[rd,no head] \arrow[d,no head] & \bullet \arrow[ld,no head] \arrow[d,no head] &                 & \bullet \arrow[r,no head] \arrow[d,no head] & \bullet \arrow[d,no head] &                 & \bullet \arrow[r,no head] \arrow[rd,no head] & \bullet \arrow[ld,no head] \\
\bullet \arrow[ru,no head] \arrow[r,no head]           & \bullet           & {}              & \bullet                      & \bullet                      & {}              & \bullet \arrow[r,no head]           & \bullet           & {}              & \bullet \arrow[r,no head]            & \bullet
\end{tikzcd}$$
}

\medskip

We know that the first column of graphs admits an $H_2^+$ action. We know from our illustration of Theorem \ref{orbitalgraph3} that the second column of graphs are the orbitals of $u^{H_2^+}$ and so admit an $H_2^+$ action. If we relabel the third column by $(23)$, and the fourth column by $(24)$, we see that these graphs also admit $H_2^+$ actions. Unfortunately such a relabelling does not leave invariant the $V_w\times V_x$ orbitals coming from $G_0\curvearrowright X$, and hence why we have only a partial result here. We do not believe there are analogous transitive magic representations $w',y',z'$ of $C(H_2^+)$ to complete the result in the obvious manner.

\bigskip

Through $u=\operatorname{diag}(u^{G_0},w,x)$, the following graph admits a $G_0$ action:

\adjustbox{scale=0.9,center}{%
$$
\begin{tikzcd}
                                                 & 2 \arrow[rr,no head] \arrow[rd,no head] \arrow[rrrd,no head] &                                      & 1 \arrow[ld,no head] \arrow[rd,no head] &                \\
6 \arrow[d,no head] \arrow[rr,no head] \arrow[ru,no head] \arrow[rrru,no head] &                                        & 9 \arrow[rr,no head]                       &                           & 5 \arrow[d,no head]  \\
8 \arrow[rr,no head] \arrow[rd,no head] \arrow[rrrd,no head]           &                                        & 10 \arrow[rr,no head] \arrow[ld,no head] \arrow[rd,no head] &                           & 7 \arrow[ld,no head] \\
                                                 & 4 \arrow[rrru,no head] \arrow[rr,no head]            &                                      & 3                       &
\end{tikzcd}
$$
}

Does $H_2^+$ act on this graph too? Or is it an explicit counterexample to $G_0\curvearrowright X\Rightarrow H_2^+\curvearrowright X$?

\bigskip

The results of this section are slightly frustrating because while we see that certain actions $G\curvearrowright X$ are not ```sufficiently quantum'' to encompass all the quantum symmetries of $X$, and we can identify a strictly larger quantum permutation group of quantum symmetries, generalising these results seems challenging. In the first instance the application of our theory requires knowing all of the transitive magic representations of $C(G)$. We leaned heavily on the results of Bichon \cite{bi3} for group duals, while the Kac--Paljutkin quantum group was sufficiently small for the classification to be straightforward. As we look at further examples, say for example the finite quantum groups of Sekine \cite{sek}, it is not clear how easy this classification question is. Indeed we do not even know if the Sekine quantum groups for parameter $k>3$ are quantum permutation groups at all.

\bigskip

We have already mentioned that the dual of the binary icosahedral group, $\widehat{2I}$, is ripe for further study. Distillation of our methods might suggest good finite dual candidates for the Frucht property. Such a candidate would give a finite graph with quantum symmetries but with finite quantum automorphism group, answering an open question of whether such a finite graph exists.

\bigskip

Finally, the non-canonical proof of Frucht of Theorem \ref{FRT} constructs for each finite group a graph:
$$G\to X_G\quad\text{ such that }\quad G(X_G)=G.$$
For small finite groups not known to be not quantum-excluding, it might be worth studying the quantum automorphism group $G^+(X_G)$ of these ``Frucht graphs''.

\bigskip

\end{document}